\documentclass[reqno,11pt]{amsart}

\usepackage{xypic,hyperref,todonotes,comment,tikz}
\usetikzlibrary{arrows,calc,decorations.markings,math,arrows.meta}
\usepackage{graphics,amssymb,mathtools}
\usepackage{latexsym}
\usepackage{mathrsfs}
\xyoption{curve}

\usepackage{color}
\definecolor{page_color}{HTML}{ffcac0}
\definecolor{color1}{HTML}{880500}
\definecolor{color2}{rgb}{.20,.60,.22}
\definecolor{color3}{rgb}{0,.40,.80}

\usepackage{hyperref}
\hypersetup{ 
colorlinks,
citecolor=color1,
filecolor=blue,
linkcolor=black,
urlcolor=color1
}

\newtheorem{lemma}{Lemma}[section]
\newtheorem{proposition}[lemma]{Proposition}
\newtheorem{theorem}[lemma]{Theorem}
\newtheorem{corollary}[lemma]{Corollary}
\newtheorem{question}[lemma]{Question}

\newtheorem*{theoremA}{Theorem}

\theoremstyle{definition}

\newtheorem{definition}[lemma]{Definition}
\newtheorem{remark}[lemma]{Remark}

\newcommand{\msc}[1]{\mathscr{#1}}
\newcommand{\mfk}[1]{\mathfrak{#1}}
\newcommand{\mcl}[1]{\mathcal{#1}}
\newcommand{\mbb}[1]{\mathbb{#1}}

\newcommand{\mbf}[1]{\mathbf{#1}}

\DeclareMathOperator{\Hom}{Hom}
\DeclareMathOperator{\Ext}{Ext}

\DeclareMathOperator{\Spec}{Spec}
\DeclareMathOperator{\Proj}{Proj}
\DeclareMathOperator{\Rep}{Rep}
\DeclareMathOperator{\rep}{rep}
\DeclareMathOperator{\supp}{supp}
\DeclareMathOperator{\res}{res}
\DeclareMathOperator{\Sym}{Sym}
\DeclareMathOperator{\Stab}{Stab}
\DeclareMathOperator{\stab}{stab}

\newcommand{\ot}{\otimes}
\newcommand{\bpsi}{\boldsymbol{\psi}}
\newcommand{\ploc}{\boldsymbol{\psi}\text{\rm-loc}}
\newcommand{\D}{\mathfrak{D}}
\renewcommand{\O}{\mathscr{O}}
\renewcommand{\1}{\mathbf{1}}
\renewcommand{\hat}{\widehat}

\bibliographystyle{plain}
\setcounter{tocdepth}{1}


\newcommand{\nocontentsline}[3]{}
\newcommand{\tocless}[2]{\bgroup\let\addcontentsline=\nocontentsline#1{#2}\egroup}

\title[]{Support theory for Drinfeld doubles of some infinitesimal group schemes}
\date{\today}

\author{Eric M.\ Friedlander}
\author{Cris Negron}

\email{ericmf@usc.edu}

\email{cnegron@usc.edu}

\begin{document}

\maketitle

\begin{abstract}
Consider a Frobenius kernel $G$ in a split semisimple algebraic group, in very good characteristic.  We provide an analysis of support for the Drinfeld center $Z(\rep(G))$ of the representation category for $G$, or equivalently for the representation category of the Drinfeld double of $kG$.  We show that thick ideals in the corresponding stable category are classified by cohomological support, and calculate the Balmer spectrum of the stable category of $Z(\rep(G))$.  We also construct a $\pi$-point style rank variety for the Drinfeld double, identify $\pi$-point support with cohomological support, and show that both support theories satisfy the tensor product property.  Our results hold, more generally, for Drinfeld doubles of Frobenius kernels in any smooth algebraic group which admits a quasi-logarithm, such as a Borel subgroup in a split semisimple group in very good characteristic.
\end{abstract}

\tocless\section{}

In this paper we provide an in depth analysis of support theory for the Drinfeld double of a Frobenius kernel $G=\mbb{G}_{(r)}$ in a sufficiently nice algebraic group $\mbb{G}$.  Equivalently, we study support for the Drinfeld center of the representation category $\rep(G)$.  As indicated in the abstract, we calculate the Balmer spectrum of thick prime ideals in the stable category of representations for the double, classify thick ideals in the stable category, and construct $\pi$-point style rank varieties for representations.  Our rank variety construction is, of course, in line with that of Suslin-Friedlander-Bendel and Friedlander-Pevtsova \cite{suslinfriedlanderbendel97,friedlanderpevtsova05,friedlanderpevtsova07}.
\par

The present study occupies a somewhat unique position in the literature in that it is among the first semi-complete analyses of support for a class of ``properly quantum" finite tensor categories (cf.\ \cite[\S 4.1]{vashaw}).  By properly quantum here we mean braided, but non-symmetric.  The quantum setting is of particular interest from a physical perspective, where non-triviality of the braiding is used to encode certain geometric information in the guises of both topological and conformal field theories (see e.g. \cite{reshetikhinturaev91,fendley21,brochierjordansafronovsnyder,gannonnegron,koshidakytola}).
\par

From a purely mathematical perspective, support varieties have been employed to study various structural aspects of representations of groups and Hopf algebras.  The stratification they provide for various stable module categories was presaged by Quillen's stratification \cite{quillen71,quillen71II} of the spectrum of the cohomology of finite groups.  Indeed, cohomology (including $\Ext$-groups) plays a central role in the formulation of support theories, revealing a surprising wealth of information about representations.  Although 
the cohomology of a Hopf algebra $A$ does not depend upon the coproduct of $A$, the tensor product certainly does and the behavior of tensor products is a fundamental underpinning of many applications of representation theory.  Consequently, ``the tensor product property" for a suport theory $V \mapsto \supp(V)$ asserting that $\supp(V\otimes W) = \supp(V) \cap \supp(W)$ is of considerable interest.
\par 

As mentioned above, this text is dedicated to an analysis of support for the Drinfeld center $Z(\rep(G))$ of the representation category of an infinitesimal group scheme $G$.  The center $Z(\rep(G))$ can be understood as the universal braided tensor category which admits a central tensor functor to $\rep(G)$, in the sense of \cite[Definition 2.1]{bezrukavnikov04}.  There are, however, a number of more explicit presentations of the center.  For example, one can identify $Z(\rep(G))$ with the category $\operatorname{Coh}(G)^G$ of $ad$-equivariant sheaves on $G$.  Or, even more concretely, $Z(\rep(G))$ is identified with the representation category of the smash product
\[
D(G):=\O(G)\#_{\rm ad} kG
\]
of the algebra of functions on $G$ with the group ring of $G$.  The algebra $D(G)$ is called the \emph{Drinfeld double}, or \emph{quantum double}, of the group ring $kG$.  For more details one can see Section \ref{sect:Z} below.
\par

The Drinfeld center construction plays an essential role in studies of tensor categories and, as suggested above, in related studies in mathematical physics.  The important point here is that, unlike classical (symmetric) tensor categories, such as $\rep(G)$ itself, $Z(\rep(G))=\rep(D(G))$ is highly \emph{non-}symmetric, and so behaves more like a \emph{quantum} group than a classical group.  In particular, the Drinfeld center is what is called a nonsemisimple modular tensor category.  For indications of the particular relevance of \emph{cohomology} in emergent studies of conformal and topological field theories one can consider the texts \cite{lentneretal,schweigertwoike,costellocreutziggaitto19,gukovetal}, for example.
\par

Let us now turn to the specifics of this paper.  For the remainder of the introduction we fix a field $k$ of prime characteristic $p$, and consider the following:
\begin{itemize}
\item[-] \emph{Fix $G$ to be the $r$-th Frobenius kernel in a split semisimple algebraic group $\mbb{G}$, in very good characteristic}.\vspace{1mm}
\item[-] \emph{Fix $\D=D(G)$ to be the corresponding Drinfeld double for $kG$}.
\end{itemize}
Here $r$ is arbitrary, so that we are considering the family of normal subgroups $\mbb{G}_{(r)}$ in $\mbb{G}$.
\par

For an explicit example, one could consider $\mbb{G}$ to be $\operatorname{SL}_n(k)$ in odd characteristic $p$ which does not divide $n$, or the symplectic group $\operatorname{Sp}_{2n}(k)$ in arbitrary odd characteristic.  We note that all of the results listed below hold more generally when $\mbb{G}$ is replaced by an arbitrary smooth algebraic group over $k$ which admits a quasi-logarithm (see Section \ref{sect:qlog} for a definition).
\par

We recall the notion of cohomological support: For a finite-dimensional Hopf algebra $A$, and any $A$-representation $V$, we let $|A|$ denote the projective spectrum of cohomology, and $|A|_V$ denote the associated cohomological support space
\[
|A|=\Proj\Ext^\ast_A(k,k),\ \ |A|_V=\operatorname{Supp}_{|A|}\Ext^\ast_A(V,V)^\sim.
\]
Here $\Ext^\ast_A(V,V)$ inherits a graded module structure over $\Ext^\ast_A(k,k)$ via the tensor structure on $\rep(A)$, and $\Ext^\ast_A(V,V)^\sim$ denotes the associated sheaf on the projective spectrum.
\par

As a first point, we prove the following.

\begin{theoremA}[\ref{thm:tensor}]
Consider $G$ as above, with corresponding Drinfeld double $\D$.  Cohomological support for $\D$ satisfies the tensor product property.  That is to say, for finite-dimensional $\D$-representations $V$ and $W$ we have
\begin{equation}\label{eq:131}
|\D|_{(V\ot W)}=|\D|_V\cap |\D|_W.
\end{equation}
\end{theoremA}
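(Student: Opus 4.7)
The plan is to deduce the tensor product property for cohomological support from a parallel statement for a $\pi$-point style rank variety, following the template of Suslin--Friedlander--Bendel and Friedlander--Pevtsova.  The essential idea is that TPP is tractable at the level of flat algebra maps from a cyclic $p$-algebra, so the main work consists of constructing such $\pi$-points for $\D$, verifying TPP at the $\pi$-point level (essentially by a Jordan-type computation), and then comparing $\pi$-point support with cohomological support.

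First, I would introduce a notion of $\pi$-point for $\D$: a flat $K$-algebra map $\alpha \colon K[t]/(t^p) \to \D_K$, for field extensions $K/k$, subject to a compatibility ensuring that projectivity in $\rep(\D_K)$ is detected by $\alpha^\ast$.  Via the identification $\rep(\D) = \operatorname{Coh}(G)^G$ and the quasi-logarithm $\bpsi$, one expects $\pi$-points to come from pairs of a $K$-point $g \in G(K)$, contributing through the $\O(G)$-factor, and an infinitesimal one-parameter subgroup of $G_K$, contributing through the $kG$-factor.  The quasi-logarithm is the essential structural input here: it linearizes $G$ equivariantly and permits use of the classical infinitesimal $\pi$-point machinery from \cite{friedlanderpevtsova05,friedlanderpevtsova07}.

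Once $\pi$-points are in place, TPP for $\pi$-point support follows directly: if the $\pi$-points are engineered to interact appropriately with the coalgebra structure of $\D$, then $\alpha^\ast(V \ot W)$ is comparable to $\alpha^\ast V \ot \alpha^\ast W$ in $\rep(K[t]/(t^p))$, where TPP is elementary.  The comparison theorem would then state that the natural map sending a $\pi$-point to a homogeneous prime in $\Ext^\ast_\D(k,k)$, via pullback of cohomology, is a homeomorphism carrying $\pi$-point support to cohomological support, and this reduces \eqref{eq:131} to the $\pi$-point case.

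The main obstacle will be this comparison theorem.  Unlike for $\rep(G)$ itself, both factors of $\D = \O(G) \#_{\mrm{ad}} kG$ contribute nontrivially to cohomology, and the tensor structure on $\rep(\D)$ is genuinely non-symmetric.  Explicit computation of $\Ext^\ast_\D(k,k)$ and its graded module action on each $\Ext^\ast_\D(V,V)$ requires a careful disentangling of the two factors; one would use the quasi-logarithm to exchange $G$ for $\mathrm{Lie}(G)$ on the $\O(G)$-side, making the cohomology computable via a Koszul-type resolution, and then track how the $ad$-action interacts with the Drinfeld-double tensor product to ensure that $\pi$-point support and cohomological support really do agree.
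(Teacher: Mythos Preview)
Your strategy inverts the paper's logical order, and the step you treat as routine---proving $\pi$-point TPP ``essentially by a Jordan-type computation''---is precisely the step the paper cannot do directly and flags as an open problem.  The difficulty is that a flat map $\alpha\colon K[t]/(t^p)\to \D_K$ is not a Hopf map, so $\res_\alpha(V\otimes W)$ bears no a priori relation to $\res_\alpha V\otimes\res_\alpha W$ under either coproduct on $K[t]/(t^p)$.  For finite group schemes this is repaired by factoring $\alpha$ through an abelian unipotent subgroup and changing coproducts there; for $\D$ there is no such subgroup, and your description of $\pi$-points as pairs $(g\in G(K),\text{1-parameter subgroup})$ does not give one.

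The paper's proof does not go through $\pi$-points at all.  Instead it introduces, for each embedded 1-parameter subgroup $\psi\colon\mbb{G}_{a(s),K}\to G_K$, the \emph{local} Hopf subalgebra $\D_\psi=\O(G_K)\# K\mbb{G}_{a(s),K}\subset \D_K$, and proves two facts: (i) cohomological support for $\D$ is reconstructed from the $|\D_\psi|_{V_K}$ via a projectivity-detection theorem, and (ii) each $\D_\psi$ satisfies TPP for cohomological support.  Point (ii) is the heart of the argument and is where the quasi-logarithm enters: it furnishes an \emph{algebra} isomorphism $\D_\psi\cong k\Sigma_\psi$ with $\Sigma_\psi$ a unipotent group scheme.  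Because $\D_\psi$ is local, cohomological support $|\D_\psi|_V$ is computed from $\Ext^\ast(k,V)$ alone and hence is independent of the coproduct; and because thick subcategories equal thick ideals in $D^b$ of a local Hopf algebra, the known classification of thick ideals for $\Sigma_\psi$ transports to a classification for $\D_\psi$ under its \emph{original} (non-cocommutative) coproduct.  TPP for $\D_\psi$ then follows by a Carlson-module argument, and TPP for $\D$ is assembled from the local statements via (i) together with a naturality formula $|\D_\psi|_V=|\D_\psi|\cap|\D|_V$.  The $\pi$-point theory is developed only afterward, in an appendix, and its TPP is \emph{deduced from} the cohomological TPP rather than the reverse.
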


From the perspective of tensor triangular geometry (e.g.\ \cite{balmer10II,bensoniyengarkrause11II}), Theorem \ref{thm:tensor} indicates that cohomological support may be used to ``structure" both the derived and stable categories of representations for the double $\D$.  We elaborate on this point, and also on our findings in this direction.
\par

Recall that the stable category $\stab(\D)$ for $\D$ is the quotient of $\rep(\D)$ by the ideal of all morphisms which factor through a projective.  This category inherits a triangulated structure from the abelian structure on $\rep(\D)$, and a monoidal structure from the monoidal structure on $\rep(\D)$.  Also, by a thick ideal in $\stab(\D)$ we mean a thick subcategory--and in particular a full triangulated subcategory--which is stable under the tensor action of $\stab(\D)$ on itself.  Finally, by a specialization closed subset in $|\D|$, we mean a subset $\Theta\subset |\D|$ which contains the closure $\bar{x}\subset \Theta$ of any point $x\in \Theta$.  We prove the following.

\begin{theoremA}[\ref{thm:thick_id}]
Cohomological support provides an order preserving bijection
\[
\begin{array}{c}
\{\text{\rm Specialization closed subsets in }|\D|\}\overset{\cong}\longrightarrow \{\text{\rm thick ideals in }\stab(\D)\},\vspace{1mm}\\
\Theta\mapsto \msc{K}_\Theta,
\end{array}
\]
where $\msc{K}_\Theta$ is the thick ideal of all objects $V$ in $\stab(\D)$ which are supported in the given set $|\D|_V\subset \Theta$.
\end{theoremA}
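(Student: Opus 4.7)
The plan is to follow the standard tensor-triangular template for classifying thick ideals, leveraging the tensor product property of Theorem \ref{thm:tensor} as the central input. First I would verify that the map $\Theta\mapsto \msc{K}_\Theta$ is well-defined, i.e.\ that $\msc{K}_\Theta$ really is a thick tensor ideal in $\stab(\D)$. Closure under summands and triangles is immediate from the general properties of $\Ext$-supports (supports are compatible with triangles and retracts). The tensor-ideal property is where Theorem \ref{thm:tensor} enters: for $V\in \msc{K}_\Theta$ and any $W\in \stab(\D)$,
\[
|\D|_{V\ot W}\ =\ |\D|_V\cap |\D|_W\ \subseteq\ \Theta,
\]
so $V\ot W\in \msc{K}_\Theta$. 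Order preservation is tautological.

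For injectivity of $\Theta\mapsto \msc{K}_\Theta$, I would argue that $\Theta$ is recoverable as $\{\mathfrak{p}\in |\D|:\mathfrak{p}\in |\D|_V\text{ for some }V\in \msc{K}_\Theta\}$, provided that every point of $\Theta$ is realized as a point in the support of some object of $\msc{K}_\Theta$. This reduces to constructing, for each homogeneous prime $\mathfrak{p}\in |\D|$, a ``Koszul'' or Carlson-style object $\kappa_\mathfrak{p}$ whose support is exactly $\overline{\{\mathfrak{p}\}}\subseteq |\D|$. Such objects are built by iteratively taking cones on maps $\Omega^{|\zeta|}\mbf{1}\to \mbf{1}$ associated to homogeneous cohomology classes $\zeta$ generating $\mathfrak{p}$, together with direct summands. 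Since $\overline{\{\mathfrak{p}\}}\subseteq \Theta$ whenever $\mathfrak{p}\in \Theta$, each such $\kappa_\mathfrak{p}$ lies in $\msc{K}_\Theta$ and so witnesses $\mathfrak{p}\in \Theta$.

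For surjectivity, given a thick tensor ideal $\msc{I}\subseteq \stab(\D)$, let $\Theta=\bigcup_{V\in \msc{I}}|\D|_V$, which is specialization closed since each $|\D|_V$ is Zariski closed in $|\D|$. The inclusion $\msc{I}\subseteq \msc{K}_\Theta$ is clear by construction. The reverse inclusion $\msc{K}_\Theta\subseteq \msc{I}$ is the crux and says: if $|\D|_W\subseteq \Theta$, then $W\in \msc{I}$. Using noetherianity of $|\D|$, cover $|\D|_W$ by finitely many supports $|\D|_{V_1},\dots,|\D|_{V_n}$ with each $V_i\in \msc{I}$, and let $V=V_1\oplus\cdots\oplus V_n$, so $|\D|_W\subseteq |\D|_V$ and $V\in \msc{I}$. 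The problem thus reduces to the key assertion:
\[
|\D|_W\subseteq |\D|_V\ \Longrightarrow\ W\in \langle V\rangle_{\otimes},
\]
where $\langle V\rangle_\otimes$ denotes the thick tensor ideal generated by $V$. I expect this to be the main obstacle.

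To handle it, I would invoke the tensor product property together with the Koszul objects constructed above, reducing inclusion of supports to a statement about tensoring with the appropriate $\kappa_\mathfrak{p}$'s: if $\mathfrak{p}\notin |\D|_V$ then $\kappa_\mathfrak{p}\ot V=0$ in $\stab(\D)$ by Theorem \ref{thm:tensor}, so $V$ ``detects'' exactly the primes in its own support. Alternatively, one can pass through a local-to-global principle by localizing at each prime $\mathfrak{p}$, using the equivalence of cohomological and $\pi$-point support (also established in this paper) to reduce detection to the rank variety picture, and then appeal to the classical Hopkins-Neeman type argument in the stratified setting of Benson-Iyengar-Krause. Either route depends critically on the tensor product property already in hand; once the key implication above is established, the bijection follows formally.
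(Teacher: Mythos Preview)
Your overall template is the standard one and matches the paper's intent, but there is a genuine gap at the crux. The implication
\[
|\D|_W\subseteq |\D|_V\ \Longrightarrow\ W\in \langle V\rangle_{\otimes}
\]
cannot be extracted from Theorem~\ref{thm:tensor} alone. Your first suggested route (tensoring with Koszul/Carlson objects $\kappa_{\mfk{p}}$) only shows that $\kappa_{\mfk{p}}\ot V$ vanishes for $\mfk{p}\notin |\D|_V$; it does not produce $W$ from $V$ via triangles, sums, and tensors. Your second route (Hopkins--Neeman via BIK stratification) is the correct one, but it does not run on the finite-dimensional tensor product property: the Rickard idempotents/local cohomology functors $\Gamma_p$ live in $\Stab(\D)$, and one needs $\Gamma_p(V\ot M)\cong V\ot\Gamma_p(M)$, equivalently a tensor product property where one factor is allowed to be \emph{infinite-dimensional}. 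That is exactly the content of Section~\ref{sect:bigtensor}, and in particular Theorem~\ref{thm:bigtpp}, which the paper proves by constructing the hybrid $\bpsi$-local support \eqref{eq:psi-loc} and checking the half-TPP
\[
\supp^{\ploc}(V\ot M)=\supp^{\ploc}(V)\cap\supp^{\ploc}(M)
\]
for $V$ compact and $M$ arbitrary (Proposition~\ref{prop:bigtpp} locally, then globalized). The paper's proof of Theorem~\ref{thm:thick_id} then just quotes this: Theorem~\ref{thm:bigtpp} says cohomological support is a ``lavish'' support theory in the sense of \cite{negronpevtsova}, and the classification is \cite[Proposition~5.2]{negronpevtsova}.

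So the missing idea in your plan is precisely the passage to big modules. Note also that even the statement that $\msc{K}_\Theta$ is a tensor ideal does not require the full TPP of Theorem~\ref{thm:tensor}; the inclusion $|\D|_{V\ot W}\subseteq |\D|_V\cap|\D|_W$ (which holds for any braided Hopf algebra, cf.\ \cite{berghplavnikwitherspoon21}) suffices there. The real work, and the place where the paper's specific hypotheses on $G$ enter, is in Section~\ref{sect:bigtensor}.
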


One can compare with analogous classification results for finite groups \cite{rickard97}, and finite group schemes \cite{friedlanderpevtsova07}.  By a thick \emph{prime} ideal in $\stab(\D)$ we mean a thick ideal $\msc{P}$ in $\stab(\D)$ which satisfies the following: a product $V\ot W$ is in $\msc{P}$ if and only if $V$ or $W$ is in $\msc{P}$.  Balmer has shown that the collection of prime ideals in $\stab(\D)$ admits the structure of a locally ringed space, which he calls the spectrum of $\stab(\D)$.
\par

Theorem \ref{thm:thick_id} implies the following calculation of the Balmer spectrum $\Spec(\stab(\D))$ for the Drinfeld double.

\begin{theoremA}[\ref{thm:spec}]
There is an isomorphism of locally ringed spaces
\[
f_{\rm coh}:|\D|\overset{\cong}\longrightarrow \Spec(\stab(\D)).
\]
\end{theoremA}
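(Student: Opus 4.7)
The plan is to obtain $f_{\rm coh}$ as an instance of Balmer's comparison map for the cohomological support datum, with Theorems \ref{thm:tensor} and \ref{thm:thick_id} providing the essential inputs, and then to identify structure sheaves via the canonical graded ring map from $\Ext^*_\D(k,k)$ into the endomorphism ring of the unit object in $\stab(\D)$.

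First I would define $f_{\rm coh}$ on points by
\[
f_{\rm coh}(\mathfrak{p}) := \{V \in \stab(\D) : \mathfrak{p} \notin |\D|_V\}
\]
and verify it takes values in $\Spec(\stab(\D))$. Standard formal properties of cohomological support (behavior under distinguished triangles, retracts, shifts, and the inclusion $|\D|_{V \otimes W} \subset |\D|_V$) show that $f_{\rm coh}(\mathfrak{p})$ is a thick ideal, and primeness is immediate from Theorem \ref{thm:tensor}: if $V \otimes W \in f_{\rm coh}(\mathfrak{p})$ then $\mathfrak{p} \notin |\D|_V \cap |\D|_W$, forcing $\mathfrak{p}$ to miss one of the two factors. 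For bijectivity and continuity I would feed Theorem \ref{thm:thick_id} into the standard tt-geometric argument: under the classification, a prime $\msc{K}_\Theta$ corresponds to a specialization-closed $\Theta$ whose complement is irreducible in the sense that any two closed subsets of $|\D|$ meeting $\Theta^c$ meet inside $\Theta^c$. Since the $|\D|_V$ form a basis for the closed subsets of $|\D|$ (via realization results for cohomological and $\pi$-point supports proven earlier in the paper) and $|\D|$ is sober, such a $\Theta^c$ must be the set of generizations of a unique $\mathfrak{p} \in |\D|$, giving $\msc{K}_\Theta = f_{\rm coh}(\mathfrak{p})$. The same basis identifies the Balmer-closed sets $\{\msc{P} : V \notin \msc{P}\}$ with $|\D|_V$ under $f_{\rm coh}$, so $f_{\rm coh}$ is a homeomorphism.

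For the isomorphism of structure sheaves I would use the canonical graded ring homomorphism
\[
\Ext^*_\D(k,k) \longrightarrow \mathrm{End}^*_{\stab(\D)}(\1),
\]
whose target is Tate cohomology and which is an isomorphism in strictly positive degrees. Since $|\D|$ is the projective spectrum and Balmer's structure sheaf on $\Spec(\stab(\D))$ is a sheafification of graded endomorphism rings on Verdier quotients, this map induces a morphism of sheaves; stalk-by-stalk it becomes an isomorphism after inverting a positive-degree element, using agreement of $\Ext^*_\D(k,k)$ with its Tate extension in positive degrees together with finite generation of $\Ext^*_\D(k,k)$. The main obstacle is precisely this last identification of sheaves: Balmer's abstract construction of $\O_{\Spec(\stab(\D))}$ must be matched with the classical $\O_{\Proj}$ on $|\D|$. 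While the point-set homeomorphism is essentially formal given Theorems \ref{thm:tensor} and \ref{thm:thick_id}, verifying that Balmer's sheafification agrees with the standard structure sheaf requires a careful local comparison on basic opens $D_+(f)$ for $f$ of positive degree. I expect this to proceed in parallel with the analogous identification for finite group schemes in the existing literature, with Tate cohomology substituted for ordinary cohomology at the stalks and the already-established homeomorphism used to transport the computation.
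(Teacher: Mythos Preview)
Your argument for the homeomorphism is essentially the paper's, just unpacked: the paper cites Balmer's abstract result (\cite[Theorem 5.2]{balmer05}) directly, feeding in only Theorem \ref{thm:thick_id}, whereas you rederive the content of that theorem by hand using both Theorems \ref{thm:tensor} and \ref{thm:thick_id}. Either way is fine.

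Where you diverge is in the ringed structure, and here you are working harder than necessary. The paper does not attempt a direct stalk-by-stalk comparison of $\O_{\Proj}$ with Balmer's sheaf. Instead it invokes Balmer's general comparison map $\rho:\Spec(\stab(\D))\to |\D|$ of \cite[Definition 5.1, 6.10]{balmer10}, observes that $\rho\circ f_{\rm coh}=\mathrm{id}$ on underlying spaces, and then appeals to \cite[Proposition 6.11]{balmer10}, which says in complete generality that whenever $\rho$ is a homeomorphism it is automatically an isomorphism of locally ringed spaces. Thus the ``main obstacle'' you flag --- matching Balmer's sheafification with the classical $\O_{\Proj}$ on basic opens --- is already absorbed into Balmer's black box and requires no case-specific work once the homeomorphism is in hand. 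Your Tate-cohomology computation would presumably succeed, but it reproves a special case of a result already available off the shelf.
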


We note that the proofs of Theorems \ref{thm:thick_id} and \ref{thm:spec} rely on the construction of a certain ``hybrid", Benson-Iyengar-Krause-type support theory \cite{bensoniyengarkrause08} for infinite-dimensional $\D$-representations.  We discuss this support theory in Section \ref{sect:bigtensor} below.
\par

Let us provide, in closing, an elaboration on the methods employed in our analysis of the center $Z(\rep(G))=\rep(\D)$, and on a related $\pi$-point construction which appears in the appendix.

\subsection{Elaborations on methods}

Our proofs of the above results intertwine various approaches to support varieties in
the literature.  There are, however, some fundamental mechanism which we leverage throughout the text.
\par

Our basic approach to support for the double is as follows: We show in Section \ref{sect:Sigma} that, for $G$ a Frobenius kernel in a sufficiently nice algebraic group $\mbb{G}$, the representation category of the Drinfeld double $\D=D(G)$ admit an ``effective comparison" with the representation category of an associated infinitesimal group scheme $\Sigma$.  In particular, there is a linear abelian, \emph{non-}tensor, equivalence
\begin{equation}\label{eq:180}
\mcl{L}:\rep(\D)\overset{\sim}\to \rep(\Sigma)
\end{equation}
which nonetheless transports support theoretic information back and forth.  For example, we have an identification of cohomological supports $|\D|_V=|\Sigma|_{\mcl{L}(V)}$ for all $V$ in $\rep(\D)$ (see Lemma \ref{lem:751}).
\par

The fact that the equivalence $\mcl{L}$ identifies support for $\D$ with that of $\Sigma$ is not a casual one, and requires one to ``descend" the equivalence $\mcl{L}$ to a family of local Hopf subalgebras $\D_\psi\subset \D$ which ``covers" $\D$.  This family of subalgebras $\{\D_\psi\}_{\psi\in V_r(G)}$ is parametrized by the scheme $V_r(G)$ of $1$-parameter subgroups in $G$, and plays a fundamental role in our study.  As a basic point, one can use the subalgebras $\D_\psi$ to detect projectivity of $\D$-representations.  In particular, a given $\D$-representation is projective if and only if its restriction to each $\D_\psi$ is projective (Theorem \ref{thm:proj_check}).  The ability of the $\D_\psi$ to detection projectivity of $\D$-representations is the covering property referred to above.
\par

The effective comparison \eqref{eq:180} is integral to our proofs of the tensor product property $|\D|_{V\ot W}=|\D|_V\cap |\D|_W$, and also to the classification results listed above.  Additionally, the particular nature of our comparison indicates the existence of a $\pi$-point support theory for representations of the double, which we discuss in more detail below.
\par

One might compare our approach with Avrunin and Scott's proof of Carlson's conjecture, where a certain change of coproduct result is used to relate supports for abelian restricted Lie algebras to those of elementary abelian groups \cite{avruninscott82}.  Similar change of coproduct methods are employed in recent work of the first author as well \cite{friedlander}.

\subsection{Conceptualizations via $\pi$-points} The introduction of $\pi$-points by Pevtsova and the first author \cite{friedlanderpevtsova05,friedlanderpevtsova07} provide an alternate way to conceptualize our results. Our discussion of an analogous theory of $\pi$-points for the Drinfeld double $\D$ is relegated to the appendix because they do not figure directly into the proofs of the results we have summarized.  Instead, these results justify the intuition of $\pi$-points.
\par

For us, a $\pi$-point for $\D$ is a choice of field extension $K/k$, and a flat algebra map $\alpha:K[t]/(t^p)\to \D_K$ which admits an appropriate factorization through one of the local Hopf subalgebras $\D_\psi\subset \D_K$ (Definitions \ref{def:pi_psi} and \ref{def:1445}).  We then construct the space $\Pi(\D)$ of equivalence classes of $\pi$-points, and a corresponding $\pi$-point support theory $V\mapsto \Pi(\D)_V$ for the double.  The support spaces $\Pi(\D)_V$ are explicitly the locus of all $\pi$-points $\alpha$ at which the restriction $\res_\alpha(V_K)$ of $V$ to $K[t]/(t^p)$ is non-projective.  
\par

Two of our main results are that $\pi$-point support for the double $\D$ satisfies the tensor product property
\begin{equation}\label{eq:190}
\Pi(\D)_{V\ot W}=\Pi(\D)_V\cap \Pi(\D)_W
\end{equation}
(Theorem \ref{thm:tpp_Pi}), and also agrees with cohomological support.  In the statement of the following theorem we suppose that $G$ is, as usual, a Frobenius kernel in a sufficiently nice algebraic group $\mbb{G}$, i.e.\ one which admits a quasi-logarithm.

\begin{theoremA}[\ref{thm:Pi}]
Consider $G$ as above, and $\D=D(G)$.  There is a homeomorphism of topological spaces
\[
\Pi(\D)\overset{\cong}\longrightarrow |\D|
\]
which restricts to a homeomorphism of support spaces $\Pi(\D)_V\overset{\cong}\to |\D|_V$ for each $V$ in $\rep(\D)$.
\end{theoremA}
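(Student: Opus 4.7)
My plan is to reduce the theorem to the corresponding result for the infinitesimal group scheme $\Sigma$ via the effective comparison $\mcl{L}:\rep(\D)\overset{\sim}\to\rep(\Sigma)$ of \eqref{eq:180}, exploiting the fact that the analogous homeomorphism $\Pi(\Sigma)\cong |\Sigma|$ is already established by Friedlander and Pevtsova in \cite{friedlanderpevtsova05,friedlanderpevtsova07}. Lemma \ref{lem:751} already gives $|\D|_V=|\Sigma|_{\mcl{L}(V)}$, so (after recording the induced identification of ambient cohomology spaces $|\D|\cong |\Sigma|$) one obtains a homeomorphism $|\D|\overset{\cong}\to \Pi(\Sigma)$ matching support spaces. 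The task is then reduced to producing a homeomorphism $\Pi(\D)\overset{\cong}\to\Pi(\Sigma)$ that intertwines the support assignments $V\mapsto \Pi(\D)_V$ and $\mcl{L}(V)\mapsto \Pi(\Sigma)_{\mcl{L}(V)}$.

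Next I would build this bijection out of the local Hopf subalgebras $\D_\psi$. By Definition \ref{def:1445} every $\pi$-point of $\D$ factors through some $\D_\psi\subset \D_K$, and one should expect that $\mcl{L}$ descends, after base change to $K$, to an equivalence of module categories $\D_\psi\text{-mod}\overset{\sim}\to \Sigma_\psi\text{-mod}$ onto a corresponding local subalgebra $\Sigma_\psi\subset \Sigma_K$ attached to the same one-parameter subgroup $\psi\in V_r(G)$. Under this local form of $\mcl{L}$, a $\pi$-point $\alpha:K[t]/(t^p)\to \D_\psi$ is transported to a $\pi$-point $\tilde\alpha:K[t]/(t^p)\to \Sigma_\psi$. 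Because the descended functor is an equivalence, it preserves projectivity of restrictions to $K[t]/(t^p)$, so the assignment $\alpha\mapsto\tilde\alpha$ is well-defined on equivalence classes and bijective (the inverse is constructed symmetrically), and moreover $\res_\alpha(V_K)$ is projective if and only if $\res_{\tilde\alpha}(\mcl{L}(V)_K)$ is projective. In particular $\Pi(\D)_V\cong \Pi(\Sigma)_{\mcl{L}(V)}$ under the resulting bijection.

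Finally I would promote this bijection to a homeomorphism. Since the topology on $\Pi(\D)$ has closed sets generated by the subspaces $\Pi(\D)_V$, and likewise for $\Pi(\Sigma)$, any bijection of sets which matches every support space is automatically a homeomorphism. Composing with the homeomorphism $\Pi(\Sigma)\cong |\D|$ from the first paragraph yields the desired $\Pi(\D)\overset{\cong}\to |\D|$ along with the matching of support spaces $\Pi(\D)_V\overset{\cong}\to |\D|_V$, which is precisely the theorem.

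I expect the main obstacle to be the second step, namely establishing that $\mcl{L}$ genuinely descends along the cover $\{\D_\psi\}_\psi$ in a form compatible with the $K[t]/(t^p)$-restrictions that define $\pi$-points. This should come down to tracking the construction of $\mcl{L}$ in Section \ref{sect:Sigma}, which in spirit follows the change-of-coproduct scheme of Avrunin-Scott alluded to in the introduction, together with the covering property of the $\D_\psi$ recorded in Theorem \ref{thm:proj_check}. Once the local form of $\mcl{L}$ is pinned down, the assembly above is routine, and the $\pi$-point tensor product property \eqref{eq:190} from Theorem \ref{thm:tpp_Pi} plays no direct role in the homeomorphism argument itself.
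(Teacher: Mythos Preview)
Your plan differs from the paper's and has a real technical wrinkle in step two. The local equivalence $\mcl{L}_\psi$ is induced by an \emph{algebra} isomorphism $a(l)_\psi:K\Sigma_\psi\to\D_\psi$ (Lemma~\ref{lem:536}), not a Hopf isomorphism, so it does not carry abelian unipotent subgroups to abelian unipotent subgroups. Now look at the two notions of $\pi$-point in play. A $\pi$-point for $\D_\psi$ (Definition~\ref{def:pi_psi}) is a flat map that is a Friedlander--Pevtsova $\pi$-point for \emph{some} group scheme $H$ with $kH\cong\D_\psi$ as algebras; a $\pi$-point for $\Sigma$ in the Friedlander--Pevtsova sense must factor through an abelian unipotent subgroup of $\Sigma_K$ with its given group structure. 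Transporting a $\pi$-point $\bar\alpha$ for $\D_\psi$ along $a(l)_\psi^{-1}$ therefore produces a flat map to $K\Sigma_\psi$ which is a $\pi$-point for \emph{some} $H$, but there is no reason this $H$ is $\Sigma_\psi$ itself. So the assignment $\alpha\mapsto\tilde\alpha$ does not land in $\pi$-points for $\Sigma$ on the nose, and the bijection you sketch is not available at the level of individual maps.

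This is repairable at the level of equivalence classes by routing through cohomology: Lemma~\ref{lem:947} gives $\Pi(\D_\psi)\cong|\D_\psi|$, Theorem~\ref{thm:891} gives $\Pi(\Sigma_\psi)\cong|\Sigma_\psi|$, and $\mcl{L}_\psi^\ast$ identifies $|\D_\psi|$ with $|\Sigma_\psi|$. But once you pass through cohomology you also need the $\Sigma_\psi$ (indexed by one-parameter subgroups of $G$, not of $\Sigma$!) to cover $|\Sigma|$, which is a nontrivial input drawn from the proof of Lemma~\ref{lem:751}. At that point you are essentially reproducing the paper's argument with an unnecessary detour through $\Pi(\Sigma)$. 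The paper instead shows directly that the canonical map $w:\Pi(\D)\to|\D|$, $[\alpha]\mapsto p_\alpha$, of Proposition~\ref{prop:973} is a bijection: injectivity comes from Proposition~\ref{prop:restrict} (if $p_\alpha=p_\beta$ then the formula $|\D_\psi|_V=|\D_\psi|\cap|\D|_V$ forces $\alpha$ and $\beta$ to detect the same representations), and surjectivity from Proposition~\ref{prop:reconstruct} applied to $V=k$. Your remark that the tensor product property plays no role is correct; the paper does not use it here either.
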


We furthermore construct a ``universal" $\pi$-point theory $\Pi_\ot(\D)_\star$, and show that our specific $\pi$-point support theory $\Pi(\D)_\star$ agrees with this universal theory.  One can see Theorem \ref{thm:Pi_ot} below.
\par

In considering the $\pi$-point perspective for support, we open up the \emph{possibility} of a deeper analysis of support for the double via explicit nilpotent operators.  One can compare with the introduction of local Jordan types for group representations in \cite{carlsonfriedlanderpevtsova08,friedlanderpevtsovasuslin07}, and constructions of vector bundles on support spaces provided in \cite{friedlanderpevtsova11,bensonpevtsova12}.  Although we won't discuss the issue here, our methods also allow us to identify cohomological and hypersurface supports for Drinfeld doubles of first Frobenius kernels $\mbb{G}_{(1)}$ in sufficiently nice algebraic groups (cf.\ \cite[Corollary 7.2, \S 13.3]{negronpevtsovaI}).

\subsection{Acknowledgments}

Thanks to Jon Carlson, Srikanth Iyengar, and Julia Pevtsova for helpful conversation.  The second author is supported by NSF grant DMS-2001608.

\tableofcontents

\section{Preliminaries}
\label{sect:prelim}

We recall basic information about Hopf algebras, finite group schemes, and the Drinfeld double construction.  We also recall the notion of cohomological support, and some basic results about Carlson modules.  Throughout this text we work over a base field $k$ which is of (finite) characteristic $p$.

\subsection{Hopf algebras and some generic notation}

We set some global notations, and recall a strong form of the Larson-Radford theorem \cite{larsonradford69}.  We assume the reader has some familiarity with Hopf algebras, and our canonical reference for the topic is Montgomery's text \cite{montgomery93}.

For us, a representation of a finite-dimensional algebra $A$ is the same thing as an $A$-module, and all representations/modules are \emph{left} representations/modules.  For a finite-dimensional Hopf algebra $A$ we let
\[
\rep(A):=\{\text{the tensor category of finite-dimensional $A$-representations}\}
\]
and
\[
\Rep(A):=\{\text{the monoidal category of all $A$-representation}\}.
\]
To be clear, when we say $\rep(A)$ is a \emph{tensor} category we recognize that all objects in $\rep(A)$ admit both left and right duals \cite[\S 2.10]{egno15}, whereas objects in $\Rep(A)$ are not dualizable in general.  We let $\operatorname{Irrep}(A)$ denote the collection of all (isoclasses of) simple $A$-representations.
\par

Throughout the text we denote finite-dimensional representations by the letters $V$ and $W$, and reserve the letters $M$ and $N$ for possibly infinite-dimensional representations.  This notation is employed throughout the text, without exception.
\par

We recall the following basic result, which will be of use later.

\begin{theorem}[\cite{larsonradford69}]
Any finite-dimensional Hopf algebra $A$ is Frobenius.  In particular, an $A$-representation $M$ is projective if and only if $M$ is injective.
\end{theorem}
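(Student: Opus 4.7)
The plan is to show that $A$ is Frobenius by constructing a non-degenerate associative bilinear form $A \times A \to k$, and then deduce the projective-equals-injective statement from the standard theory of self-injective algebras.

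First I would establish the existence of a nonzero right integral $\lambda \in A^\ast$, i.e.\ a functional with the property $\sum \lambda(a_{(1)})\, a_{(2)} = \lambda(a)\cdot 1$ for every $a \in A$.  Finite-dimensionality of $A$ enters crucially: the space of such integrals can be realized as the kernel of a certain endomorphism of $A^\ast$ built from the coproduct, and a rank--nullity computation forces this kernel to be nonzero.  Given such a $\lambda$, I would define the pairing $B(a,b) := \lambda(ab)$ and analyze the associated map $\phi : A \to A^\ast$, $\phi(a)(b) := \lambda(ba)$.  A direct verification shows $\phi$ is $A$-linear for the appropriate module structures, and the key point is that $\phi$ is injective.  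Injectivity amounts to the assertion that $\lambda(Aa) = 0$ forces $a=0$, which follows from the antipode axioms together with the defining integral identity, and is essentially equivalent to the uniqueness (up to scalar) of integrals in $A^\ast$.  Since $\dim_k A = \dim_k A^\ast$, injectivity upgrades $\phi$ to an isomorphism, and $B$ is the desired non-degenerate Frobenius form.

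For the ``in particular" clause, one invokes the general fact that a Frobenius algebra is self-injective: the left regular module ${}_A A$ is injective, so every projective $A$-module is a summand of a free module and is therefore injective.  Conversely, since $A$ is finite-dimensional over $k$ it is Noetherian, and over a Noetherian self-injective ring (that is, a quasi-Frobenius ring) every injective module is projective.  This gives the equivalence for arbitrary, possibly infinite-dimensional, modules $M$.

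I expect the injectivity of $\phi$ to be the main obstacle.  It packages together the delicate fact that the space of integrals is one-dimensional together with the assertion that integrals pair non-trivially with every nonzero left ideal of $A$, and it is precisely this content that distinguishes the Frobenius property of Hopf algebras from more elementary self-duality considerations on the underlying coalgebra.
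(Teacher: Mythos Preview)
Your proposal is correct and amounts to unpacking the two citations the paper's proof gives: the Larson--Sweedler construction of a Frobenius form via integrals, and the quasi-Frobenius theory (the paper cites Faith--Walker) for the equivalence of projectivity and injectivity over arbitrary modules. One small caveat: the existence of a nonzero integral is not literally a rank--nullity computation on an endomorphism of $A^\ast$ but rather comes from the fundamental theorem of Hopf modules applied to $A^\ast$; apart from that, your outline and your identification of the nondegeneracy of the pairing as the crux are on target.
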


\begin{proof}
The algebra $A$ is Frobenius by Larson and Radford \cite{larsonradford69}.  We note that if $A$ is Frobenius then injectivity is the same as projectivity, even for infinite-dimensional modules, by \cite[Theorem 5.3]{faithwalker67}.
\end{proof}

\subsection{Finite group scheme}

All group schemes in this text are affine.  A group scheme $G$, over a base field $k$, is called finite if it is finite as a scheme over $\Spec(k)$.  Rather, $G$ is finite if it is affine and has finite-dimensional (Hopf) algebra of global functions $\O(G)$.  For such finite $G$ we let $kG$ denote the associated group algebra $kG=\O(G)^\ast$.  A finite group scheme is called \emph{infinitesimal} if $G$ is connected, i.e.\ if $\O(G)$ is local, and \emph{unipotent} if the group algebra $kG$ is local.
\par

Following the framework of the previous section, we let $\rep(G)$ denote the category of finite-dimensional $kG$-modules, and $\Rep(G)$ denote the category of arbitrary $kG$-modules.  Note that $kG$-modules are identified with $\O(G)$-comodules as in \cite[Lemma 1.6.4]{montgomery93}, so that finite-dimensional $kG$-modules are in fact identified with $k$-linear representations of the group scheme $G$.

\subsection{The Drinfeld double and the Drinfeld center}
\label{sect:Z}

Let $G$ be a finite group scheme.  The adjoint action of $G$ on itself induces an action of $kG$ on $\O(G)$, and we can form the corresponding smash product, which is known as the \emph{Drinfeld double}, or \emph{quantum double} of $kG$, $D(G)=\O(G)\# kG$.  We usually employ the generic notation $\D$ for the Drinfeld double
\[
\D:=D(G).
\]
The algebra $\D$ admits a unique Hopf algebra structure for which the two algebra inclusions $\O(G)\to \D$ and $kG\to \D$ are inclusions of Hopf algebras.  See for example \cite[Corollary 10.3.10]{montgomery93}.

\begin{remark}
There is an analogous construction $A\rightsquigarrow D(A)$ of the Drinfeld double for an arbitrary finite-dimensional Hopf algebra $A$.  So, we are only discussing a particular instance of a general construction.
\end{remark}

\begin{remark}
If one compares directly with the presentation of \cite{montgomery93}, then one finds an alternate description of the double as a smash product between the coopposite Hopf algebra $\O(G)^{\rm cop}$ and $kG$.  However, by applying the antipode to the $\O(G)$ factor in $\D$, one sees that the cooposite comultiplication on $\O(G)$ can be replaced with the usual one, up to Hopf isomorphism.
\end{remark}

From a categorical perspective, we can consider the Drinfeld center of the representation category $\rep(G)$.  This is the category of pairs
\[
Z(\rep(G))=\left\{\begin{array}{c}
\text{pairs $(V,\gamma_V)$ of an object $V$ in $\rep(G)$, and }\\
\text{a choice of half braiding }\gamma_V:V\ot -\to -\ot V
\end{array}\right\}
\]
Such a half-braiding $\gamma_V$ is required to be a natural isomorphism of endofunctors of $\rep(G)$, and we require that this natural isomorphism satisfies the expected compatibilities with the tensor structure on $\rep(G)$ \cite[Definition XIII.4.1]{kassel12}.
\par

The center $Z(\rep(G))$ inherits a tensor structure from that of $\rep(G)$, and admits a canonical braiding $c_{V,W}:V\ot W\to W\ot V$ induced by the given half-braidings on objects $\gamma_{V,W}:V\ot W\to W\ot V$.  This braiding on $Z(\rep(G))$ is highly non-symmetric, in any sense which one might consider \cite{shimizu19}.  For example, any object $V$ in $Z(\rep(G))$ for which the square braiding is trivial $c^2_{V,-}=id_{V\ot-}$ must itself be trivial, $V\cong \1^{\oplus \dim(V)}$.
\par

We have the following categorical interpretation of the double.

\begin{theorem}[{\cite[Theorem XIII.5.1]{kassel12}}]\label{thm:146}
For any finite group scheme $G$, there is an equivalence of tensor categories $\rep(\D)\cong Z(\rep(G))$.
\end{theorem}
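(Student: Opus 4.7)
The plan is to identify both $\rep(\D)$ and $Z(\rep(G))$ with the category of (left-left) Yetter--Drinfeld modules over the Hopf algebra $kG$, and then check that the resulting equivalence respects tensor structures. Since $G$ is finite, $\O(G)$ is the linear dual of $kG$, so an $\O(G)$-module structure on a vector space $V$ is the same thing as a $kG$-comodule structure $\delta\colon V\to kG\ot V$, which I write in Sweedler notation as $\delta(v)=v_{(-1)}\ot v_{(0)}$. Unwinding the defining relations of the smash product $\O(G)\#_{\rm ad}kG$, a $\D$-module is precisely a vector space equipped with both a $kG$-action and a $kG$-coaction subject to the Yetter--Drinfeld compatibility
\[
\delta(h\cdot v)=h_{(1)}v_{(-1)}S(h_{(3)})\ot h_{(2)}\cdot v_{(0)}.
\]

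For the forward functor $\rep(\D)\to Z(\rep(G))$, I would send a $\D$-module $V$ to the pair $(V,\gamma_V)$ with $\gamma_{V,W}(v\ot w):=v_{(-1)}\cdot w\ot v_{(0)}$ for any $W\in\rep(G)$. Naturality in $W$ is immediate, the coassociativity and counit axioms for $\delta$ translate directly to the two hexagon-type compatibilities of a half-braiding with the tensor product and unit of $\rep(G)$, and the Yetter--Drinfeld condition above is precisely what forces each $\gamma_{V,W}$ to be $kG$-equivariant, hence a morphism in $\rep(G)$. For the inverse functor, given $(V,\gamma_V)\in Z(\rep(G))$ I would extract a coaction by evaluating $\gamma_V$ at the left regular representation $kG\in\rep(G)$, namely $\delta:=\gamma_{V,kG}\circ(\mathrm{id}_V\ot 1)\colon V\to kG\ot V$. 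Naturality of $\gamma_V$ along $kG$-linear maps $kG\to W$, together with the hexagon compatibility, then forces $\gamma_{V,W}$ for general $W$ to coincide with the formula above, confirming the two constructions are mutually inverse.

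Finally, I would check tensor functoriality by unpacking the coproduct of $\D$: since the inclusions $\O(G)\hookrightarrow\D$ and $kG\hookrightarrow\D$ are both Hopf, the coproduct of $\D$ restricts to the usual coproducts on each factor, so the $\D$-module structure on $V\ot W$ corresponds to the diagonal $kG$-action together with the codiagonal $kG$-coaction. Comparing with the prescribed half-braiding $\gamma_{V\ot W,X}=(\gamma_{V,X}\ot\mathrm{id}_W)\circ(\mathrm{id}_V\ot\gamma_{W,X})$ on the center side yields exactly the required identification. The principal obstacle, as is typical in Yetter--Drinfeld arguments, is the bookkeeping needed to match the half-braiding axioms with the coproduct of the smash product; this can be handled cleanly by careful Sweedler-notation manipulation, or alternatively by using the fact that $\rep(G)$ is generated under colimits by $kG$ to reduce every verification to the single module $W=kG$.
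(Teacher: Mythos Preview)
The paper does not prove this theorem at all; it is simply cited from Kassel's textbook \cite[Theorem XIII.5.1]{kassel12} and used as a black box. Your sketch via Yetter--Drinfeld modules is the standard argument and is essentially the one given in Kassel, so there is nothing to compare against on the paper's side.
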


As a corollary to this result, we see that the category $\rep(\D)$ of representations for the Drinfeld double is canonically braided.  This point is relevant for many applications in mathematical physics, and is also relevant in studies of support and cohomology.  Specifically, many support theoretic results which are stated in the context of symmetric tensor categories can be immediately extended to the braided setting.

\begin{remark}
As with the construction of the Drinfeld double, one can construct the Drinfeld center of an arbitrary finite tensor category.  Furthermore, the obvious analog of Theorem \ref{thm:146} is valid when we replace $\rep(G)$ with the representation category of an arbitrary finite-dimensional Hopf algebra.
\end{remark}

In addition to considering the double $\D$ we also consider a certain class of Hopf subalgebras $\D'\subset \D$ which one associates to subgroups in $G$.  The following lemma will prove useful for our analysis of the subalgebras $\D'$.

\begin{lemma}\label{lem:76}
Suppose that $G$ is an infinitesimal group scheme, and let $H\subset G$ be a closed subgroup in $G$.  Let $H$ act on $\O(G)$ via the (restriction of the) adjoint action, and consider the smash product algebra $\O(G)\# kH$.
\par

Restriction along the surjective algebra map $\O(G)\# kH\to kH$, $f\ot x\mapsto \epsilon(f)x$, provides a bijection
\[
\operatorname{Irrep}(H)\overset{\cong}\longrightarrow \operatorname{Irrep}(\O(G)\# kH).
\]
\end{lemma}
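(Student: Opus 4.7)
The plan is to exhibit $\mathfrak{m}\# kH$ as a nilpotent two-sided ideal inside $\O(G)\# kH$ whose quotient is $kH$; the lemma then reduces to the standard fact that simple modules vanish on the Jacobson radical.

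First, since $G$ is infinitesimal, the Hopf algebra $\O(G)$ is local finite-dimensional with residue field $k$, so its augmentation ideal $\mathfrak{m}=\ker(\epsilon)$ is the unique maximal ideal and is nilpotent, say $\mathfrak{m}^N=0$. Because the adjoint action of $H$ on $\O(G)$ is by Hopf algebra automorphisms (in particular automorphisms that preserve the counit), the ideal $\mathfrak{m}$ is $H$-stable.

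Next, I would verify that the subspace $I:=\mathfrak{m}\# kH\subset \O(G)\# kH$ is a two-sided ideal. Using the smash product formula $(f\ot x)(g\ot y)=f\cdot(x_{(1)}\cdot g)\ot x_{(2)}y$, right multiplication by $I$ lands in $\mathfrak{m}\cdot\mathfrak{m}\# kH\subset I$ because $x_{(1)}\cdot g\in\mathfrak{m}$ whenever $g\in\mathfrak{m}$, while left multiplication by $I$ is immediate from the fact that $\mathfrak{m}$ is an ideal in $\O(G)$. The same computation, iterated, shows $I^n\subset \mathfrak{m}^n\# kH$, so $I^N=0$ and $I$ is a nilpotent ideal. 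Moreover, the surjection $\O(G)\# kH\to kH$ given by $f\ot x\mapsto \epsilon(f)x$ is an algebra map (an easy check using that $\epsilon$ is $H$-invariant) with kernel exactly $I$, identifying the quotient with $kH$.

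Finally, any nilpotent two-sided ideal is contained in the Jacobson radical, so every simple $\O(G)\# kH$-module is annihilated by $I$ and hence is a simple module for the quotient $kH$. Conversely, every simple $kH$-module pulls back to a simple $\O(G)\# kH$-module via the surjection. This pullback/restriction establishes the claimed bijection $\operatorname{Irrep}(H)\xrightarrow{\cong}\operatorname{Irrep}(\O(G)\# kH)$. The only nontrivial step is the ideal verification in the second paragraph; once that is in hand, the rest is formal.
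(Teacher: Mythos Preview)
Your argument is correct and is the standard one: the augmentation ideal $\mfk{m}\subset \O(G)$ is nilpotent and $H$-stable, so $\mfk{m}\# kH$ is a nilpotent two-sided ideal, hence lies in the Jacobson radical, and simples therefore factor through the quotient $kH$. The paper does not spell this out at all; it simply cites \cite[Proposition 5.5]{friedlandernegron18}, whose proof is essentially the same computation you give. One small slip: when you check that $I=\mfk{m}\# kH$ absorbs right multiplication, the product $(f\ot x)(g\ot y)$ with $g\in\mfk{m}$ lands in $\O(G)\cdot\mfk{m}\# kH=\mfk{m}\# kH$, not in $\mfk{m}\cdot\mfk{m}\# kH$ as written (the left factor $f$ is arbitrary). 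This does not affect the conclusion, and your subsequent inductive claim $I^n\subset \mfk{m}^n\# kH$ is argued correctly.
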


\begin{proof}
Same as the proof of \cite[Proposition 5.5]{friedlandernegron18}.
\end{proof}

\subsection{Cohomological support}
\label{sect:cohom_supp}

\begin{definition}
We say a finite-dimensional Hopf algebra $A$ (over $k$) has finite type cohomology (over $k$) if the following two contions hold:
\begin{itemize}
\item[(a)] The extensions $\Ext^\ast_A(k,k)$ form a finitely generated $k$-algebra.
\item[(b)] For any pair of finite-dimensional $A$-representations $V$ and $W$, the extensions $\Ext^\ast_A(V,W)$ form a finitely generated module over $\Ext^\ast_A(k,k)$, via the tensor action
\[
-\ot -:\Ext^\ast_A(k,k)\ot\Ext^\ast_A(V,W)\to \Ext^\ast_A(V,W).
\]
\end{itemize}
\end{definition}

Let $A$ be a finite-dimensional Hopf algebra, and suppose that $A$ has finite type cohomology.  We take
\[
|A|:=\Proj\Ext^\ast_A(k,k).
\]
Formally, $\Proj\Ext^\ast_A(k,k)$ is the topological space of homogeneous prime ideals in $\Ext^\ast_A(k,k)$, which we equip with the Zariski topology.  Since $\Ext^\ast_A(k,k)$ is graded commutative and finitely generated, restriction along the inclusion $\Ext^{ev}_A(k,k)\to \Ext^\ast_A(k,k)$ provides a homeomorphism $\Proj\Ext^\ast_A(k,k)\cong \Proj\Ext^{ev}_A(k,k)$.  The structure sheaf on $\Proj\Ext^\ast_A(k,k)$ is the expected one, whose sections over a basic open $D_f$, $f\in \Ext^n_A(k,k)$, are the degree $0$ elements in the localization $\Ext^\ast_A(k,k)_f$.
\par

For any finite-dimensional $A$-representation $V$, we can consider the self-extensions $\Ext^\ast_A(V,V)$ and the tensor action of $\Ext^\ast_A(k,k)$ on these extensions.  Note that the extensions of $V$ form a \emph{graded} module over $\Ext^\ast_A(k,k)$, and we may consider the associated sheaf $\Ext^\ast_A(V,V)^\sim$ on $|A|=\Proj\Ext^\ast_A(k,k)$.  We define the cohomological support of $V$ as the support of its associated sheaf
\begin{equation}\label{eq:support}
|A|_V:=\operatorname{Supp}_{|A|}\Ext^\ast_A(V,V)^\sim.
\end{equation}

We have the following basic claim.

\begin{lemma}[{\cite[Proposition 2]{pevtsovawitherspoon09}}]
Suppose that $A$ has finite type cohomology.  A finite-dimensional $A$-representation $V$ is projective if and only if its support vanishes, $|A|_V=\emptyset$.
\end{lemma}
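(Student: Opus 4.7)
The plan is to handle the two directions separately, using Larson-Radford's theorem that $A$ is Frobenius (and hence projective $=$ injective for $A$-modules). For the forward direction, I would argue: if $V$ is projective, then $V$ is also injective, so $\Ext^{\geq 1}_A(V, V)=0$, whence $\Ext^*_A(V, V)$ is concentrated in degree zero. Any graded $\Ext^*_A(k,k)$-module $M$ concentrated in degree zero has zero associated sheaf on $\Proj \Ext^*_A(k,k)$: for any homogeneous $f\in \Ext^*_A(k,k)$ of positive degree, $f$ acts by zero on $M$, so in the degree-zero part of the localization $M_f$ every class $[m]$ equals $[fm/f]=[0]$. Hence $|A|_V=\emptyset$.

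For the converse, I would assume $|A|_V=\emptyset$ and first reduce to a finite-dimensionality statement. Since the module action on $\Ext^*_A(V,V)$ factors through the ring homomorphism $\Ext^*_A(k,k)\to \Ext^*_A(V,V)$, $\zeta\mapsto \zeta\cdot id_V$, one sees that $\mathrm{Ann}_{\Ext^*_A(k,k)}(\Ext^*_A(V,V))$ coincides with the kernel of this map. Emptiness of the support means that this kernel has radical containing the irrelevant ideal $\Ext^{>0}_A(k,k)$, so the image of $\Ext^*_A(k,k)$ in $\Ext^*_A(V,V)$ is a graded-commutative algebra whose positive-degree elements are all nilpotent, hence a finite-dimensional $k$-algebra. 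Combined with finite-type cohomology, this forces $\Ext^*_A(V,V)$ itself to be finite-dimensional over $k$, and in particular $\Ext^n_A(V,V)=0$ for $n\gg 0$.

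The main obstacle is to deduce projectivity of $V$ from this eventual vanishing. My plan is to exploit the Hopf structure via the rigidity-based isomorphism $\Ext^*_A(V,V)\cong\Ext^*_A(\1,V\otimes V^*)$, which transfers finite-dimensionality to $\Ext^*_A(\1,V\otimes V^*)$. A Carlson-module style argument (or an equivalent stable-category periodicity argument, leveraging finite-type cohomology on all Hom spaces) then allows one to conclude that $V\otimes V^*$ is projective. Since $V$ is a direct summand of $(V\otimes V^*)\otimes V$ via the evaluation-coevaluation pair, and projectives form a tensor ideal in $\rep(A)$, $V$ itself must be projective. This delicate last step, bridging finite-dimensional cohomology to outright projectivity, is the content of \cite[Proposition 2]{pevtsovawitherspoon09}.
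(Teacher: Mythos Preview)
The paper does not give its own proof of this lemma; it simply cites \cite[Proposition 2]{pevtsovawitherspoon09}. So there is no in-paper argument to compare against, and your task reduces to getting the cited argument right.

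Your forward direction is fine. In the converse, your reduction to finite-dimensionality of $\Ext^\ast_A(V,V)$ is correct and is exactly the first half of the Pevtsova--Witherspoon argument. The gap is in the final step. Knowing only that $\Ext^n_A(k,V\otimes V^\ast)=0$ for $n\gg 0$ does \emph{not} by itself force $V\otimes V^\ast$ to be projective: projectivity (equivalently injectivity, since $A$ is Frobenius) requires $\Ext^n_A(S,V\otimes V^\ast)=0$ for $n\gg 0$ and \emph{all} simples $S$, and vanishing for the single simple $S=k$ is not enough in general. Your ``Carlson-module style argument'' does not supply the missing information about the other simples.

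The clean fix, which also removes the detour through $V\otimes V^\ast$, is to observe that for \emph{every} finite-dimensional $W$ the $\Ext^\ast_A(k,k)$-action on $\Ext^\ast_A(V,W)$ factors through the ring map $\Ext^\ast_A(k,k)\to\Ext^\ast_A(V,V)$ (act on the $V$-slot). Hence $\operatorname{Ann}\bigl(\Ext^\ast_A(V,V)\bigr)\subseteq\operatorname{Ann}\bigl(\Ext^\ast_A(V,W)\bigr)$, and the same nilpotence-of-the-image argument you already gave shows $\Ext^\ast_A(V,W)$ is finite-dimensional for every $W$. Taking $W$ to run over the finitely many simples gives $\Ext^n_A(V,S)=0$ for all simples $S$ and all $n\gg 0$, so $V$ has finite projective dimension; over a self-injective algebra this forces $V$ to be projective.
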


In considering the aforementioned collection of Hopf subalgebras $\mfk{D}'\subset \mfk{D}$ we also take account of the following.

\begin{lemma}\label{lem:131}
Suppose that $A$ has finite type cohomology, and that $B\to A$ is an inclusion of Hopf algebras.  Then
\begin{enumerate}
\item $B$ has finite type cohomology.
\item The restriction map $\Ext^\ast_A(k,k)\to \Ext^\ast_B(k,k)$ is a finite algebra map, and the induced map on spectra
\[
\res^\ast:\Spec\Ext^\ast_B(k,k)\to \Spec\Ext^\ast_A(k,k)
\]
is such that $(\res^\ast)^{-1}(0)=\{0\}$.
\end{enumerate}
\end{lemma}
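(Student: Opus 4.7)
The plan is to combine the Nichols--Z\"{o}ller freeness theorem for Hopf subalgebras with the Eckmann--Shapiro adjunction, and then finish with an elementary graded-local argument for the fiber calculation. Throughout, write $R=\Ext^\ast_A(k,k)$ and $S=\Ext^\ast_B(k,k)$, and denote the restriction map by $\res^\ast$.

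By Nichols--Z\"{o}ller, the Hopf inclusion $B\hookrightarrow A$ makes $A$ into a free $B$-module; consequently the induction functor $A\otimes_B(-)\colon \Rep(B)\to \Rep(A)$ is exact and sends projectives to projectives. Deriving the induction--restriction adjunction yields a natural Eckmann--Shapiro isomorphism
\[
\Ext^\ast_B(V,N)\;\cong\;\Ext^\ast_A(A\otimes_B V,\,N)
\]
for any $B$-module $V$ and any $A$-module $N$. A standard Yoneda/cup-product check shows that this isomorphism intertwines the $R$-action on the right-hand side (by tensoring with $A$-cocycles) with the $R$-action on $\Ext^\ast_B(V,N)$ obtained by restricting scalars along $\res^\ast$.

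Setting $V=N=k$, the module $A\otimes_B k\cong A/AB^+$ is a finite-dimensional $A$-module, so the finite type hypothesis on $A$ forces $S$ to be a finitely generated $R$-module via $\res^\ast$. Thus $\res^\ast$ is a finite algebra map, and since $R$ is a finitely generated $k$-algebra the same follows for $S$. Applying Eckmann--Shapiro to arbitrary finite-dimensional $V,W$ gives $\Ext^\ast_B(V,W)\cong\Ext^\ast_A(A\otimes_B V,W)$, which is finitely generated over $R$ by hypothesis, hence over $S$ through $\res^\ast$. This settles (1) and the finiteness statement in (2).

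For the fiber computation, let $R_+\subset R$ denote the ideal of positive-degree elements (the origin of $\Spec R$). Then $(\res^\ast)^{-1}(R_+)=\Spec(S/R_+S)$. Since $S$ is a finite $R$-module and $R/R_+=k$, the quotient $S/R_+S$ is a finite-dimensional graded $k$-algebra, with degree zero component $\Ext^0_B(k,k)=k$, and in which every positive-degree element is nilpotent by boundedness of the grading. Hence $S/R_+S$ is a local graded algebra with residue field $k$, so its spectrum is the single point $S_+/R_+S$, which corresponds to the origin $S_+\in\Spec S$. This yields $(\res^\ast)^{-1}(0)=\{0\}$. The main subtlety will be the compatibility of the Eckmann--Shapiro isomorphism with the cohomological tensor action of $R$; this is standard but requires careful tracking of cup products through the adjunction. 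Every other step is a formal consequence of freeness, finite generation, and the structure of finite-dimensional positively graded commutative $k$-algebras.
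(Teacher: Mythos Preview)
Your overall strategy matches the paper's: the paper simply invokes \cite[Proposition~3.3]{negronplavnik} for (1) and the finiteness assertion in (2), and then gives exactly your graded-local argument for the fiber over the irrelevant ideal. What you have written is essentially a self-contained version of that cited result, via Nichols--Z\"oller freeness and Eckmann--Shapiro.

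There is, however, a genuine gap in your verification of condition (b) of finite type cohomology for $B$. You correctly state Eckmann--Shapiro as $\Ext^\ast_B(V,N)\cong\Ext^\ast_A(A\otimes_B V,N)$ for $N$ an \emph{$A$-module}, but condition (b) requires you to control $\Ext^\ast_B(V,W)$ for \emph{arbitrary} finite-dimensional $B$-modules $V$ and $W$; the expression ``$\Ext^\ast_A(A\otimes_B V,W)$'' is not defined when $W$ is only a $B$-module, so the displayed isomorphism you invoke does not apply. The repair is short: use the Hopf structure on $B$ to rewrite $\Ext^\ast_B(V,W)\cong\Ext^\ast_B(k,V^\ast\otimes W)$, and then apply the \emph{coinduction} form of Eckmann--Shapiro (right adjoint to restriction, exact since $A$ is free over $B$) to get
\[
\Ext^\ast_B(k,V^\ast\otimes W)\;\cong\;\Ext^\ast_A\bigl(k,\ \Hom_B(A,V^\ast\otimes W)\bigr),
\]
which is finite over $R$ by the hypothesis on $A$, hence over $S$. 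The $R$-equivariance check here is the same cup-product compatibility you already flagged for the first adjunction.
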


\begin{proof}
The algebra $B$ has finite-type cohomology, and the algebra map of (2) is finite, by \cite[Proposition 3.3]{negronplavnik}.  Since this map is finite, the fiber
\[
k\ot_{\Ext^\ast_A(k,k)}\Ext^\ast_B(k,k)
\]
is a finite-dimensional non-negatively graded algebra, and hence the irrelevant ideal is the unique prime ideal in this algebra.  This implies that the preimage $(\res^\ast)^{-1}(0)$ is the singleton $\{0\}$.
\end{proof}

Lemma \ref{lem:131} (2) tells us that restriction $\res:\rep(A)\to \rep(B)$ induces a \emph{well-defined} map on projective spectra $|B|\to |A|$.  This map is furthermore closed and has finite fibers.

\subsection{Cohomological support for group schemes}

In considering a finite group scheme $G$ (over $k$) we adopt the particular notation
\[
|G|:=|kG|=\Proj \Ext_G^\ast(k,k).
\]
We may consider cohomological support for $G$-representations as described in Section \ref{sect:cohom_supp}.
\par

In addition to cohomological support, there are a number of additional support theories for $\rep(G)$ which one might employ in tandem.  In particular, when $G$ is an infinitesimal group scheme, one can consider the $k$-scheme $V_r(G)$ of $1$-parameter subgroups in $G$ and its associated support theory of \cite{suslinfriedlanderbendel97}.  Although we do not use this theory explicitly in the text, it does ``run in the background" of our analysis.  So we sketch a presentation of this support theory here.
\par

At fixed $r\geq 0$, $V_r(G)$ is the moduli space of group scheme maps $\mbb{G}_{a(r)}\to G$ \cite{suslinfriedlanderbendel97I}, and for any finite-dimensional $G$-representation $W$ one has an associated support space $V_r(G)_W$.  The support space $V_r(G)_W$ is specifically the non-projectivity locus of the representation $W$ in $V_r(G)$.  So, for example, a $k$-point $\alpha:\mbb{G}_{a(r)}\to G$ is in the support $V_r(G)_W$ precisely when the restriction $\res_\alpha(W)$ is a non-projective $\mbb{G}_{a(r)}$-representation.  The moduli space $V_r(G)$ is a conical scheme, and the supports $V_r(G)_W$ are closed conical subschemes in $V_r(G)$.  
\par

By results of \cite{suslinfriedlanderbendel97}, we have a natural scheme map $\Psi:\mbb{P}(V_r(G))\to |G|$ from the projectivization of $V_r(G)$, and this map is a \emph{homeomorphisms} whenever $G$ is of height $\leq r$.  The map $\Psi$ restricts to homeomorphisms $\Psi_W:\mbb{P}(V_r(G)_W)\to |G|_W$ between support spaces at arbitrary $W\in \rep(G)$, again when $G$ is of height $\leq r$.  So the support theory $V_r(G)_\star$ provides a kind of group theoretic ``realization" of cohomological support for infinitesimal group schemes.

\begin{remark}
Our notation $|G|$ conflicts slightly with the notation of \cite{suslinfriedlanderbendel97I,suslinfriedlanderbendel97,friedlanderpevtsova07}.  Namely, $|G|$ is used to denote the \emph{affine} spectrum of $\Ext^\ast_G(k,k)$ in the aforementioned papers, while we use it to denote the projective spectrum.
\end{remark}

\begin{remark}
By results of \cite{friedlanderpevtsova07}, the support theory $W\mapsto V_r(G)_W$ for $\rep(G)$ has a reasonable extension to the category $\Rep(G)$ of arbitrary $kG$-representation.
\end{remark}

\subsection{Carlson modules and support}

Consider a finite-dimensional Hopf algebra $A$ with finite type cohomology.  Define the $n$-th syzygy $\Omega^nk$ of the trivial representation via any choice of projective resolution of $k$, $0\to \Omega^nk\to P^{-(n-1)}\to\dots\to P^0\to k$.  Given an extension $\zeta\in \Ext^n_A(k,k)$, we can represent $\zeta$ as a map $\tilde{\zeta}:\Omega^nk\to k$ and define
\[
L_\zeta:=\ker\left(\tilde{\zeta}:\Omega^nk\to k\right).
\]
The object $L_\zeta$ is called a \emph{Carlson module} associated to $\zeta$.
\par

The object $L_\zeta$ is clearly not uniquely defined by $\zeta$, since the definition relies on a choice of representative for the map $\zeta:\Sigma^{-n}k\to k$ in the derived category $D^b(A)$.  However, $L_\zeta$ is unique up to isomorphism in the stable category for $A$, and so is sufficiently unique for most support theoretic applications.  Carlson modules have a number of exceedingly useful properties.  We recall a few of these properties here.

\begin{proposition}[{\cite[Proposition 3]{pevtsovawitherspoon09}}]\label{prop:carlson}
Consider arbitrary homogenous extension $\zeta\in \Ext^{ev}_A(k,k)$.  For any finite-dimensional $A$-representation $V$ there is an equality of supports
\begin{equation}\label{eq:157}
|A|_{(L_\zeta\ot V)}=Z(\zeta)\cap |A|_V.
\end{equation}
\end{proposition}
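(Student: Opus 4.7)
The plan is to begin from the defining short exact sequence $0 \to L_\zeta \to \Omega^n k \to k \to 0$, where $n$ is the cohomological degree of $\zeta$ (which I may assume positive; the scalar case is trivial). Tensoring with $V$ gives
\[
0 \to L_\zeta \otimes V \to \Omega^n k \otimes V \to V \to 0,
\]
and I will then take $\Ext$ in and out of this sequence twice. The central preliminary observation is that, since $A$ is Frobenius, $\Omega$ is invertible on $\stab(A)$ and $\Omega^n k \otimes V$ is stably isomorphic to $\Omega^n V$; consequently $\Ext^i_A(\Omega^n k \otimes V, N) \cong \Ext^{i+n}_A(V, N)$ for any $A$-module $N$. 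Moreover, under this identification the map induced by the quotient $\Omega^n k \otimes V \to V$ is, up to sign, precisely multiplication by $\zeta$ via the tensor action of $R := \Ext^*_A(k,k)$. This compatibility is essentially the definition of the $R$-module structure on $\Ext$ groups, together with the fact that $L_\zeta$ is the fiber of a chain-level representative of $\zeta$.

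Applying $\Ext^*_A(V, -)$ to the tensored sequence produces a long exact sequence whose key connecting maps are multiplication by $\zeta$ on $E := \Ext^*_A(V, V)$. Cutting this long sequence into short exact sequences exhibits $F := \Ext^*_A(V, L_\zeta \otimes V)$, up to degree shifts, as an extension of $E/\zeta E$ by the annihilator $0 :_E \zeta$. Graded Nakayama applied to the finitely generated graded $R$-module $E$ gives $\operatorname{Supp}_{|A|}(E/\zeta E) = \operatorname{Supp}_{|A|}(E) \cap Z(\zeta) = |A|_V \cap Z(\zeta)$, and $0 :_E \zeta$ has support contained in the same set because it embeds into $E$ and is annihilated by $\zeta$. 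Hence $\operatorname{Supp}_{|A|}(F) = |A|_V \cap Z(\zeta)$.

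To pass from $F$ to $G := \Ext^*_A(L_\zeta \otimes V, L_\zeta \otimes V)$, I apply $\Ext^*_A(-, L_\zeta \otimes V)$ to the same short exact sequence. The same stable identification and the same recognition of the relevant boundary as multiplication by $\zeta$ now realize $G$, up to degree shifts, as an extension of $F/\zeta F$ by $0 :_F \zeta$. Since $\operatorname{Supp}_{|A|}(F) \subseteq Z(\zeta)$ and $\zeta$ lies in the irrelevant ideal of $R$, the class $\zeta$ belongs to $\sqrt{\operatorname{Ann}_R(F)}$ and so acts nilpotently on $F$; consequently both $F/\zeta F$ and $0 :_F \zeta$ have support equal to that of $F$. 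Therefore
\[
|A|_{L_\zeta \otimes V} = \operatorname{Supp}_{|A|}(G) = \operatorname{Supp}_{|A|}(F) = |A|_V \cap Z(\zeta).
\]
The main technical obstacle is the rigorous identification of the two boundary maps with multiplication by $\zeta$ in the graded $R$-module structure; this hinges on the compatibility of the tensor action of $R$ with Yoneda products, together with the definitional role of $L_\zeta$ as the fiber of a representative of $\zeta$. Once that compatibility is in hand, everything else reduces to elementary graded commutative algebra on finitely generated modules over the Noetherian algebra $R$.
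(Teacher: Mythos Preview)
The paper does not supply its own proof of this proposition; it simply records the statement with a citation to \cite[Proposition 3]{pevtsovawitherspoon09} and moves on. So there is no in-paper argument to compare against.

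Your argument is correct and is essentially the standard one. The two long exact sequences you extract from $0\to L_\zeta\otimes V\to \Omega^n k\otimes V\to V\to 0$, together with the identification of the relevant maps with multiplication by $\zeta$ under the tensor action of $R=\Ext^\ast_A(k,k)$, reduce the problem to elementary support calculations for finitely generated graded modules over the Noetherian ring $R$. Your use of Nakayama to compute $\operatorname{Supp}(E/\zeta E)$, and the observation that $\zeta$ acts nilpotently on $F$ once $\operatorname{Supp}(F)\subset Z(\zeta)$, are exactly the right commutative-algebra inputs. The only place to be slightly careful is in low degrees, where $\Ext^i_A(V,\Omega^n k\otimes V)$ need not literally equal $\Ext^{i-n}_A(V,V)$; but since support only sees behavior in sufficiently high degree this is harmless, and you could phrase everything in Tate cohomology to avoid the issue entirely. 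The identification of the boundary maps with $\zeta$-multiplication is indeed the one genuine verification required, and it follows, as you say, from the compatibility of the tensor action with Yoneda composition together with the definition of $L_\zeta$.
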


As a corollary to Proposition \ref{prop:carlson} we find

\begin{corollary}[{\cite[Corollary 1]{pevtsovawitherspoon09}}]\label{cor:174}
Any closed subset $\Theta$ in $|A|$ is realizable as the support of a product $L=L_{\zeta_1}\ot \dots\ot L_{\zeta_m}$ of Carlson modules, $\Theta=|A|_L$.
\end{corollary}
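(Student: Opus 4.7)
The plan is to realize $\Theta$ as a finite intersection of zero sets $Z(\zeta_i)$ for homogeneous even-degree cohomology classes, and then iterate the Carlson module formula from Proposition \ref{prop:carlson}.

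First I would choose the $\zeta_i$. Since $|A|=\Proj\Ext^\ast_A(k,k)$ is homeomorphic to $\Proj\Ext^{ev}_A(k,k)$ (as recalled earlier in Section \ref{sect:cohom_supp}), every closed subset $\Theta$ is the vanishing locus $V(I)$ of a homogeneous ideal $I$ inside the finitely generated graded-commutative ring $\Ext^{ev}_A(k,k)$. By Hilbert basis, $I$ admits finitely many homogeneous generators $\zeta_1,\dots,\zeta_m\in \Ext^{ev}_A(k,k)$, so that
\[
\Theta \;=\; Z(\zeta_1)\cap\dots\cap Z(\zeta_m).
\]

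Next I would run the Carlson computation. Apply Proposition \ref{prop:carlson} with $V=k$: since $\Ext^\ast_A(k,k)$ is its own module, $|A|_k=|A|$, and so $|A|_{L_{\zeta_m}}=Z(\zeta_m)\cap|A|=Z(\zeta_m)$. Now set $V=L_{\zeta_m}$ in Proposition \ref{prop:carlson} to obtain
\[
|A|_{L_{\zeta_{m-1}}\ot L_{\zeta_m}}\;=\;Z(\zeta_{m-1})\cap Z(\zeta_m).
\]
Iterating this step $m-2$ more times with $V=L_{\zeta_{i+1}}\ot\dots\ot L_{\zeta_m}$ yields
\[
|A|_{L_{\zeta_1}\ot\dots\ot L_{\zeta_m}}\;=\;Z(\zeta_1)\cap\dots\cap Z(\zeta_m)\;=\;\Theta,
\]
which is exactly the desired realization.

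I do not expect any serious obstacle: the only subtlety is the reduction to even-degree generators (needed so that Proposition \ref{prop:carlson}, stated for $\zeta\in\Ext^{ev}_A(k,k)$, applies), but this is immediate from the homeomorphism $|A|\cong \Proj\Ext^{ev}_A(k,k)$. The finite type cohomology hypothesis is what supplies both this homeomorphism and the finiteness of the generating set $\{\zeta_i\}$.
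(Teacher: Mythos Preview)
Your argument is correct and is precisely the standard one: write $\Theta$ as a finite intersection of hypersurfaces $Z(\zeta_i)$ for even-degree $\zeta_i$ and iterate Proposition~\ref{prop:carlson} starting from $V=k$. The paper does not supply its own proof of this corollary, instead citing \cite[Corollary~1]{pevtsovawitherspoon09} directly; your proof is exactly the argument underlying that citation.
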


Carlson modules also enjoy certain naturality properties with respect to exact tensor functors.  We list a particular occurence of such naturality here.

\begin{lemma}\label{lem:159}
If $\iota:B\to A$ is an inclusion of Hopf algebras, and $L_\zeta$ is a Carlson module associated to an extension $\zeta\in \Ext^\ast_A(k,k)$ over $A$, then the restriction $\res_\iota(L_\zeta)$ is a Carlson module for the image of $\res_\iota(\zeta)\in \Ext^\ast_B(k,k)$ of this extension in $\Ext^\ast_B(k,k)$.
\end{lemma}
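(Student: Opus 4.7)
The plan is to build the Carlson module for $\res_\iota(\zeta)$ by restricting the data used to construct $L_\zeta$ over $A$, and to check that restriction intertwines the relevant constructions.

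The first step is to observe that since $\iota:B\to A$ is an inclusion of finite-dimensional Hopf algebras, the Nichols-Zoeller theorem implies that $A$ is free, and in particular projective, as a $B$-module via $\iota$. Consequently, the restriction functor $\res_\iota:\rep(A)\to \rep(B)$ is exact and sends projectives to projectives. Given any projective resolution $P^\bullet\to k$ of the trivial representation over $A$, the restricted complex $\res_\iota(P^\bullet)\to k$ is therefore a projective resolution of $k$ over $B$.

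Next, I would use this resolution to compare syzygies. Fix $\zeta\in \Ext^n_A(k,k)$ and choose a cocycle representative $\tilde{\zeta}:\Omega^n k\to k$, where $\Omega^n k=\ker(P^{-(n-1)}\to P^{-(n-2)})$ in the chosen resolution. Because $\res_\iota$ is exact, we have
\[
\res_\iota(\Omega^n k)=\ker\bigl(\res_\iota(P^{-(n-1)})\to \res_\iota(P^{-(n-2)})\bigr),
\]
which is precisely the $n$-th syzygy of $k$ over $B$ computed with respect to the restricted resolution. The restricted morphism $\res_\iota(\tilde{\zeta}):\res_\iota(\Omega^n k)\to k$ is then, by the very definition of the restriction map on $\Ext$, a cocycle representative of $\res_\iota(\zeta)\in \Ext^n_B(k,k)$.

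Finally, since $\res_\iota$ preserves kernels, one obtains
\[
\res_\iota(L_\zeta)=\res_\iota\bigl(\ker \tilde{\zeta}\bigr)=\ker\bigl(\res_\iota(\tilde{\zeta})\bigr),
\]
and the right-hand side is by definition a Carlson module $L_{\res_\iota(\zeta)}$ for the restricted extension. Because Carlson modules are only well-defined up to stable isomorphism (depending on the choice of resolution and cocycle representative), this equality of concrete constructions is enough to conclude the stable identification asserted in the lemma. There is no serious obstacle here; the only point that requires care is that the restriction sends projective resolutions to projective resolutions, which is guaranteed by the Hopf-algebraic hypothesis via Nichols-Zoeller.
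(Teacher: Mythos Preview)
Your proof is correct and follows the same approach as the paper: invoke Nichols--Zoeller to see that $A$ is projective over $B$, so that a projective resolution of $k$ over $A$ restricts to one over $B$, and then read off the identification of syzygies and Carlson modules. The paper's version is a one-line sketch of exactly this argument; you have simply spelled out the details.
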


\begin{proof}
By the Nichols-Zoeller theorem \cite{larsonradford69}, $A$ is projective as a $B$-module.  So the result just follows from the fact that a projective resolution $P\to k$ of the unit over $A$ restricts to a projective resolution over $B$.
\end{proof}

\section{The Hopf subalgebras $\D_\psi$ and a projectivity test}
\label{sect:proj_test}

Let $G$ be an infinitesimal group scheme.  We show that the Drinfeld double $\D=D(G)$ admits a family of Hopf embeddings $\{\D_\psi\to \D\}_{\psi\in \text{1-param}}$ which is parametrized by the space of $1$-parameter subgroups in $G$.  Each of the Hopf algebras $\D_\psi$ is local, and so behaves like a ``unipotent subgroup" in $\D$.
\par

We show that the family $\{\D_\psi\to \D\}_{\psi\in\text{1-param}}$ can be used to check projectivity of arbitrary (possibly infinite-dimensional) $\D$-representations.  One can see Theorem \ref{thm:proj_check} below for a specific statement.  We furthermore show that the cohomological support $|\D|_V$ of a finite-dimensional $\D$-representation $V$ can be reconstructed from the support spaces $|\D_\psi|_{\res_\psi(V)}$ of the restrictions of $V$ to the various $\D_\psi$.
\par

The family of embeddings $\{\D_\psi\to \D\}_{\psi\in \text{1-param}}$ plays an integral role throughout our study, and is therefore a fundamental object of interest.  As implied above, an analysis of support for the double $\D$ will be shown to be reducible to an analysis of support for the local subalgebras $\D_\psi$.  One can compare with the group theoretic setting, where the support theory of a finite group scheme is similarly reducible to that of its unipotent subgroups (cf.\ \cite{friedlanderpevtsova05,friedlanderpevtsova07}).

\subsection{1-parameter subgroups}

Let $k$ be a field of characteristic $p>0$, and $G$ be an infinitesimal group scheme over $k$.  We let $G_K$ denote the base change along any given field extension $k\to K$.

\begin{definition}
An embedded $1$-parameter subgroup for $G$ is a pair $(K,\psi)$ of a field extension $k\to K$ and a closed map of group schemes $\psi:\mbb{G}_{a(s),K}\to G_K$.  We call $K$ the field of definition for such a $1$-parameter subgroup $\psi$.
\end{definition}

Of course, by $\mbb{G}_{a(r),K}$ we mean the base change of the $r$-th Frobenius kerel in $\mbb{G}_a$.  Let us take a moment to compare with \cite{suslinfriedlanderbendel97I,suslinfriedlanderbendel97}.
\par

In the texts \cite{suslinfriedlanderbendel97I,suslinfriedlanderbendel97}, by a $1$-parameter subgroup the authors mean an \emph{arbitrary} group map $\psi':\mbb{G}_{a(r),K}\to G_K$.  Having fixed a preferred quotient $\mbb{G}_{a(r)}\to \mbb{G}_{a(s)}$ for each $s\leq r$, such a group map specifies an integer $s\leq r$ and a unique factorization of $\psi'$ as a composition of the quotient $\mbb{G}_{a(r),K}\to \mbb{G}_{a(s),K}$ followed by an embedding $\psi:\mbb{G}_{a(s),K}\to G_K$.  In this way, the moduli space of $1$-parameter subgroups $V_r(G)$ employed in \cite{suslinfriedlanderbendel97} is identified with the moduli space of embedded $1$-parameter subgroups for $G$, provided $G$ is of height $\leq r$.  (One can define the moduli space of embedded $1$-parameter subgroups in precise analogy with \cite[Definition 1.1]{suslinfriedlanderbendel97I}.)  One thus translate freely between the language of \cite{suslinfriedlanderbendel97I,suslinfriedlanderbendel97} and the language we employ in this text.

Having clarified with this point, we recall the following essential results of Suslin-Friedlander-Bendel \cite[Proposition 7.6]{suslinfriedlanderbendel97} and Pevtsova \cite[Theorem 2.2]{pevtsova04}.

\begin{theorem}[{\cite{suslinfriedlanderbendel97,pevtsova04}}]\label{thm:pevtsova}
Consider an infinitesimal group scheme $G$.  An arbitrary $G$-representation $M$ is projective over $G$ if and only if for every field extension $k\to K$, and embedded $1$-parameter subgroup $\psi:\mbb{G}_{a(s),K}\to G_K$, the the base change $M_K$ is projective over $\mbb{G}_{a(s),K}$.
\end{theorem}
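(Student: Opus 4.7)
The plan is to prove the two directions separately. The forward direction is a standard restriction-of-projectives argument, and the substantive content lies in the reverse direction, in particular in its extension from finite-dimensional to arbitrary representations.

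For the forward direction, base change along any field extension $k\to K$ preserves projectivity, so $M_K$ is projective over $G_K$. An embedded $1$-parameter subgroup $\psi:\mbb{G}_{a(s),K}\hookrightarrow G_K$ corresponds to an inclusion of Hopf algebras $k\mbb{G}_{a(s),K}\hookrightarrow kG_K$, and by the Nichols--Zoeller theorem the larger Hopf algebra is free over the sub-Hopf algebra. Restriction along such an inclusion therefore preserves projectivity, so $\res_\psi(M_K)$ is projective over $k\mbb{G}_{a(s),K}$.

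For the reverse direction when $M=V$ is finite-dimensional, I would invoke the Suslin--Friedlander--Bendel rank variety $V_r(G)_V$. Its $K$-points are identified, by construction, with equivalence classes of embedded $1$-parameter subgroups $\psi:\mbb{G}_{a(s),K}\to G_K$ at which $\res_\psi(V_K)$ fails to be projective over $k\mbb{G}_{a(s),K}$. The hypothesis on $V$ forces this locus to be empty at every field extension $K/k$, so $V_r(G)_V$ is empty as a scheme. The central SFB theorem then identifies emptiness of the rank variety with projectivity of $V$ over $G$, closing the finite-dimensional case.

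The extension to arbitrary $M$, due to Pevtsova, is the main technical obstacle, since the scheme $V_r(G)$ supports a theory designed for finite-dimensional modules. The approach is to promote the finite-dimensional result via generic $1$-parameter subgroups: for each irreducible component of $V_r(G)$ one takes its function field $K$ and obtains a generic embedded $\psi:\mbb{G}_{a(s),K}\to G_K$ defined over $K$. Using that $kG$ is Frobenius, projectivity of $M$ is equivalent to injectivity and may be tested by the vanishing of $\Ext^1_G(V,M)$ for every finite-dimensional $V$. A hypothetical nonzero Ext class would already be visible on a finite-dimensional subquotient of a projective resolution of $V$, and one argues that such a finite-dimensional failure persists upon restriction along a generic $\psi$ over the appropriate function field, contradicting the hypothesis that $\res_\psi(M_K)$ is projective. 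The principal difficulty is constructing the generic detecting $\psi$ for infinite-dimensional $M$, and this is the essence of Pevtsova's refinement of the SFB theorem.
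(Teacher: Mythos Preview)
The paper does not prove this theorem; it is simply recalled from the literature with the attribution ``essential results of Suslin--Friedlander--Bendel \cite[Proposition 7.6]{suslinfriedlanderbendel97} and Pevtsova \cite[Theorem 2.2]{pevtsova04}'' and stated without argument. So there is no proof in the paper to compare against.

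That said, your sketch is a fair outline of how those cited results fit together. The forward direction and the finite-dimensional reverse direction are accurate. Your account of the infinite-dimensional step is a bit loose: Pevtsova's argument in \cite{pevtsova04} does not proceed by producing a finite-dimensional witness to a nonzero $\Ext^1$ class and then restricting it, as you suggest. Rather, the key input is that for an infinitesimal group scheme projectivity of an arbitrary module is detected by the restrictions along the \emph{generic} $1$-parameter subgroups (one for each irreducible component of $V_r(G)$), and this is established by a direct analysis using the structure of $k\mbb{G}_{a(s)}$-modules and a careful limit/filtration argument, not by reducing to the finite-dimensional SFB statement via an Ext-class chase. Your last paragraph gestures at the right ingredients (generic points of $V_r(G)$, function fields) but the mechanism you describe for transporting non-projectivity is not the one that actually works.
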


To be clear, when we say that $M_K$ is projective over $\mbb{G}_{a(s),K}$ we mean that $M_K$ restricts to a projective $\mbb{G}_{a(s),K}$-representation along the given map $\psi:\mbb{G}_{a(s),K}\to G_K$.
\par

When we consider a finite-dimensional representation $V$, and $k$ is algebraically closed, it suffices to check projectivity of $V$ after restricting to all $1$-parameter subgroups which are defined over $k$.

\begin{theorem}\cite{suslinfriedlanderbendel97}\label{thm:sfb}
Consider an infinitesimal group scheme $G$, and a finite-dimensional $G$-representation $V$.  Suppose also that the base field $k$ is algebraically closed.  Then $V$ is projective over $G$ if and only if, for every embedded $1$-parameter subgroup $\psi:\mbb{G}_{a(s)}\to G$ which is defined over $k$, $V$ is projective over $\mbb{G}_{a(s)}$.
\end{theorem}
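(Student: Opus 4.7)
The plan is as follows. The forward direction is routine: for any closed embedding $\psi:\mbb{G}_{a(s)}\to G$, freeness of Hopf algebras over Hopf subalgebras (Nichols--Zoeller) makes $kG$ free as a $k\mbb{G}_{a(s)}$-module, so restriction of any projective $G$-representation along $\psi$ is automatically projective. This requires no finiteness or algebraic closedness assumptions.

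For the converse, the strategy is to upgrade the hypothesis from $k$-rational $1$-parameter subgroups to $K$-rational ones over arbitrary field extensions $K/k$, and then invoke Theorem \ref{thm:pevtsova}. The key tool is the scheme $V_r(G)$ of $1$-parameter subgroups of \cite{suslinfriedlanderbendel97}, for $r$ at least the height of $G$, together with its closed conical support subscheme $V_r(G)_V \subset V_r(G)$. The defining property of $V_r(G)_V$ is that its $K$-points, for any field extension $K/k$, classify precisely those $K$-rational embedded $1$-parameter subgroups $\psi_K:\mbb{G}_{a(s),K}\to G_K$ along which the base change $V_K$ fails to be $\mbb{G}_{a(s),K}$-projective.

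Under the hypothesis, $V_r(G)_V$ has no $k$-rational points away from the origin. Since $V_r(G)_V$ is a scheme of finite type over the algebraically closed field $k$, Hilbert's Nullstellensatz forces it to be set-theoretically concentrated at the origin. Because the support construction is compatible with field extension along $k\to K$, its $K$-points for any such $K$ are likewise trivial. Equivalently, every embedded $1$-parameter subgroup $\psi_K:\mbb{G}_{a(s),K}\to G_K$ over every extension $K/k$ restricts $V_K$ to a projective $\mbb{G}_{a(s),K}$-representation. Theorem \ref{thm:pevtsova} then delivers projectivity of $V$ over $G$.

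The main obstacle is ensuring the scheme-theoretic input: one needs to know that $V_r(G)_V$ is genuinely a closed subscheme of $V_r(G)$ whose functor of points controls non-projectivity over all field extensions, not merely over $k$, and that it behaves well under base change. This base-change compatibility is the substantive content lying behind the theorem, and is precisely what the analysis of \cite{suslinfriedlanderbendel97} provides. Once that is in hand, the transition from ``no $k$-points away from the origin'' to ``no $K$-points away from the origin for any $K$'' is a formal consequence of algebraic closedness via the Nullstellensatz.
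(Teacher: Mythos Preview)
Your argument is correct and close in spirit to the paper's, but the route differs in one structural way worth noting. The paper works directly with the cohomological support $|G|_V\subset |G|$: citing \cite[Corollary 6.8]{suslinfriedlanderbendel97}, projectivity of $V$ along every $k$-rational embedded $1$-parameter subgroup forces $|G|_V$ to have no closed points; since $|G|_V$ is closed in the Noetherian space $|G|$, it is therefore empty, and $V$ is projective. You instead work on the $V_r(G)$ side, use the Nullstellensatz to conclude $V_r(G)_V$ is set-theoretically the origin, invoke base-change compatibility to get the same over every $K$, and then appeal to Theorem~\ref{thm:pevtsova}. The core mechanism---a finite-type support locus over $\bar{k}$ with no nontrivial $k$-points must be trivial---is identical; the paper's version is simply shorter because emptiness of $|G|_V$ already implies projectivity without passing through arbitrary field extensions and Theorem~\ref{thm:pevtsova}. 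Your detour is harmless, and your explicit flagging of base-change compatibility of $V_r(G)_V$ as the substantive input from \cite{suslinfriedlanderbendel97} is accurate.
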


\begin{proof}
Suppose that $V$ is projective when restricted to all such $\psi$.  Then \cite[Corollary 6.8]{suslinfriedlanderbendel97} tells us that $V$ has no closed points in its support.  Since the support $|G|_V$ is closed, we conclude that $|G|_V=\emptyset$, and hence that $V$ is projective.
\end{proof}

\begin{remark}
Since the category $\Rep(G)$ is Frobenius, we can replace projectivity with injectivity, or even flatness, in the statements of Theorem \ref{thm:pevtsova} and \ref{thm:sfb}.
\end{remark}

\subsection{A family of local subalgebras, and projectivity}

As we have just observed, $1$-parameter subgroups play an essential role in studies of support for infinitesimal group schemes.  We provide a corresponding family of Hopf subalgebras for the Drinfeld double.

\begin{definition}
Let $G$ be an infinitesimal group scheme, and $\psi:\mbb{G}_{a(s),K}\to G_K$ be an embedded $1$-parameter subgroup.  Let $\D=D(G)$ denote the Drinfeld double for $G$.  We define $\D_\psi$ to be the Hopf algebra
\[
\D_\psi:=\O(G_K)\# K\mbb{G}_{a(s),K},
\]
where $\mbb{G}_{a(s),K}$ acts on $\O(G_K)$ by restricting the adjoint action of $G_K$ along the given embedding $\psi$.
\end{definition}

Note that each Hopf algebra $\D_\psi$ embeds in the double $\D_K$ via the map $id_\O\ot \psi:\D_\psi\to \D_K$.  So we might speak of the $\D_\psi$ as Hopf subalgebras in $\D_K$, via a slight abuse of language.

\begin{lemma}
Consider an infinitesimal group scheme $G$, with Drinfeld double $\D$.  For any embedded $1$-parameter subgroup $\psi:\mbb{G}_{a(s)}\to G$ the Hopf algebra $\D_\psi$ is local.
\end{lemma}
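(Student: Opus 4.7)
The plan is to reduce the claim directly to Lemma \ref{lem:76}. The key observation is that $\D_\psi = \O(G_K)\# K\mbb{G}_{a(s),K}$ is exactly the kind of smash product considered in that lemma: after base change, $G_K$ remains an infinitesimal group scheme over $K$ (base change preserves infinitesimality), and the data of an embedded $1$-parameter subgroup tells us precisely that $\psi$ realizes $\mbb{G}_{a(s),K}$ as a closed subgroup of $G_K$, with $\mbb{G}_{a(s),K}$ acting on $\O(G_K)$ by the restricted adjoint action. So Lemma \ref{lem:76} applies verbatim (with base field $K$), and restriction along the surjection $\D_\psi\to K\mbb{G}_{a(s),K}$ yields a bijection
\[
\operatorname{Irrep}\bigl(\mbb{G}_{a(s),K}\bigr)\overset{\cong}\longrightarrow \operatorname{Irrep}(\D_\psi).
\]

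Next I would observe that $\mbb{G}_{a(s),K}$ is unipotent: its group algebra $K\mbb{G}_{a(s),K}$ is the truncated polynomial ring on the primitive generators dual to the coordinate ring $\O(\mbb{G}_{a(s),K})=K[t]/(t^{p^s})$, which is manifestly local. Consequently $\mbb{G}_{a(s),K}$ has a unique simple representation, namely the trivial module $K$. Combined with the bijection above, this forces $\D_\psi$ to have a unique simple module up to isomorphism, and that simple module is the one-dimensional trivial module with endomorphism algebra $K$.

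Finally, a finite-dimensional $K$-algebra with a unique simple module whose endomorphism algebra equals $K$ has semisimple quotient $\D_\psi/\operatorname{rad}(\D_\psi)\cong K$ by Artin--Wedderburn, and is therefore local. This completes the argument. There is no real obstacle here; the proof is essentially a one-line application of Lemma \ref{lem:76}, once one verifies that the hypotheses of that lemma transfer to the base-changed situation over $K$.
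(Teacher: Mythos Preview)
Your proof is correct and follows essentially the same route as the paper: apply Lemma~\ref{lem:76} (after base change) to get $\operatorname{Irrep}(\mbb{G}_{a(s),K})\cong\operatorname{Irrep}(\D_\psi)$, then use unipotence of $\mbb{G}_{a(s),K}$ to conclude there is a unique simple, hence $\D_\psi$ is local. Your added remark that the unique simple is one-dimensional (so that Artin--Wedderburn forces $\D_\psi/\operatorname{rad}(\D_\psi)\cong K$) makes explicit a step the paper leaves implicit.
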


\begin{proof}
By changing base if necessary we may assume $K=k$.  By Lemma \ref{lem:76} the restriction map provides an bijection $\operatorname{Irrep}(\mbb{G}_{a(s)})\to \operatorname{Irrep}(\D_\psi)$.  Now, since $\mbb{G}_{a(s)}$ is unipotent, the trivial representation is the only simple object in $\rep(\mbb{G}_{a(s)})$.  So we observe that $\rep(\D_\psi)$ has a unique simple object, and therefore that $\D_\psi$ is local.
\end{proof}

We recall that, according to Theorem \ref{thm:pevtsova}, $1$-parameter subgroups in a given infinitesimal group scheme can be used to detect projectivity of $G$-representations.  We observe an analogous detection property for the $\D_\psi$.

\begin{theorem}\label{thm:proj_check}
Consider an arbitrary representation $M$ over the Drinfeld double $\D$ of an infinitesimal group scheme $G$.  Then $M$ is projective over $\D$ if and only if for every field extension $k\to K$, and every embedded $1$-parameter subgroup $\psi:\mbb{G}_{a(s),K}\to G_K$, the base change $M_K$ is projective over $\D_\psi$.
\par

When $M$ is finite-dimensional, and $k$ is algebraically closed, $M$ is projective over $\D$ if and only if, for all embedded $1$-parameter subgroups $\psi:\mbb{G}_{a(s)}\to G$ which are defined over $k$, $M$ is projective over $\D_\psi$.
\end{theorem}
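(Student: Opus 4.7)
The forward direction follows from the Nichols--Zoeller theorem, which implies that $\D_K$ is free over each Hopf subalgebra $\D_\psi$, so that restriction preserves projectivity.

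For the converse, the main ingredient is a projectivity criterion for the double: a $\D$-module $M$ is projective if and only if $M|_{\O(G)}$ is projective and the $\O(G)$-invariants $M^{\O(G)}$ are projective over $kG$. Here $M^{\O(G)}$ inherits a natural $kG$-module structure from the fact that $\O(G)$ is a normal Hopf subalgebra of $\D$ with quotient $\D//\O(G)\cong kG$. The criterion is obtained from the Lyndon--Hochschild--Serre spectral sequence
\[ E_2^{p,q}=\Ext^p_{kG}\bigl(k,\,\Ext^q_{\O(G)}(k,\,N^{\ast}\otimes M)\bigr)\Rightarrow \Ext^{p+q}_\D(N,M). \]
When $M|_{\O(G)}$ is projective, so is $(N^{\ast}\otimes M)|_{\O(G)}$ for every finite-dimensional $\D$-module $N$, and the sequence collapses to the row $q=0$; a natural $kG$-equivariant identification $(N^{\ast}\otimes M)^{\O(G)}\cong N^{\ast}\otimes M^{\O(G)}$ (which uses the freeness of $M|_{\O(G)}$ over the commutative Hopf subalgebra $\O(G)$) then yields $\Ext^{\ast}_\D(N,M)\cong \Ext^{\ast}_{kG}(N|_{kG},M^{\O(G)})$. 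Since every $kG$-module inflates to a $\D$-module via the Hopf surjection $\D\twoheadrightarrow kG$, and projectivity coincides with injectivity by Larson--Radford, the criterion follows. The same argument gives the analogous criterion for each $\D_\psi=\O(G_K)\# K\mbb{G}_{a(s),K}$, with $\O(G_K)$ normal and quotient $K\mbb{G}_{a(s),K}$.

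With the criterion in hand, the theorem is immediate. Suppose $M_K$ is projective over $\D_\psi$ for every $(K,\psi)$. Restricting to $\O(G_K)\subseteq \D_\psi$ and descending along the faithfully flat extension $k\to K$, we obtain $M|_{\O(G)}$ projective; applying the criterion to each $\D_\psi$ gives the base change $(M^{\O(G)})_K=(M_K)^{\O(G_K)}$ projective over $K\mbb{G}_{a(s),K}$ for every $(K,\psi)$. By Theorem~\ref{thm:pevtsova} applied to the $kG$-module $M^{\O(G)}$, these conditions aggregate to projectivity of $M^{\O(G)}$ over $kG$. The criterion applied to $\D$ itself then yields projectivity of $M$ over $\D$. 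The finite-dimensional case over algebraically closed $k$ is handled identically, invoking Theorem~\ref{thm:sfb} in place of Theorem~\ref{thm:pevtsova}.

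The main technical obstacle is rigorously establishing the criterion, and in particular verifying the $kG$-equivariant isomorphism $(N^{\ast}\otimes M)^{\O(G)}\cong N^{\ast}\otimes M^{\O(G)}$ under the freeness hypothesis on $M|_{\O(G)}$. This follows from the standard Hopf-algebraic isomorphism $A\otimes W\cong A\otimes W^{\mathrm{triv}}$ for a Hopf algebra $A$ with diagonal action (applied here to $A=\O(G)$), combined with the $kG$-equivariance of all constructions (stemming from the adjoint action of $kG$ on $\O(G)$ being by Hopf automorphisms).
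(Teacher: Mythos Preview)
Your approach and the paper's are essentially the same: reduce projectivity over $\D$ to projectivity of $M|_{\O(G)}$ together with projectivity of the invariants $\Hom_{\O(G)}(k,M)$ over $kG$, via the Lyndon--Hochschild--Serre spectral sequence for the normal Hopf subalgebra $\O(G)\subset \D$, and then invoke Theorems~\ref{thm:pevtsova}/\ref{thm:sfb}.

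The only substantive difference is in how the spectral sequence is handled. The paper tests injectivity of $M$ directly against \emph{simple} $\D$-modules $S$, and uses Lemma~\ref{lem:76} to observe that every such $S$ is inflated from $kG$, hence trivial as an $\O(G)$-module. The spectral sequence then reads $\Ext^\ast_G(S,\Ext^\ast_{\O}(k,M))\Rightarrow\Ext^\ast_\D(S,M)$ with no further manipulation, and collapses as desired. You instead first dualize to $\Ext^\ast_\D(k,N^\ast\otimes M)$ and must then produce the $kG$-equivariant identification $(N^\ast\otimes M)^{\O(G)}\cong N^\ast\otimes M^{\O(G)}$. Your proposed justification --- the Hopf isomorphism $A\otimes W\cong A\otimes W^{\mathrm{triv}}$ together with ``$kG$-equivariance of all constructions'' --- is not quite enough as written: the decomposition of $M|_{\O(G)}$ as a free $\O(G)$-module is only an $\O(G)$-module splitting, with no reason for the free basis (or the twisting isomorphism) to be $kG$-stable, so the asserted equivariance needs a real argument. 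The cleanest fix is to bypass the issue entirely by restricting to simple $N$: then Lemma~\ref{lem:76} gives that $\O(G)$ acts trivially on $N^\ast$, and the identification $(N^\ast\otimes M)^{\O(G)}=N^\ast\otimes M^{\O(G)}$ is a tautology. Since testing against simples suffices for injectivity, this closes the gap and recovers exactly the paper's argument.
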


\begin{proof}
Recall that $\D$ is Frobenius, so that projectivity of $M$ is equivalent to injectivity.  It suffices to check projectivity/injectivity after changing base to the algebraic closure $\bar{k}$, so that we may assume $k=\bar{k}$.  Furthermore, as with any finite dimensional algebra, injectivity of $M$ is equivalent to vanishing of the extensions
\[
\Ext^{>0}_{\D}(S,M)=0\ \ \text{from the sum $S$ of all simple $\D$-reps.}
\]
So we seek to establish the above vanishing of cohomology.  In what follows we take $\O=\O(G)$.
\par

If $M$ is projective over $\D$, then $M$ is projective over the Hopf subalgebra $\O\subset \D$ \cite[Theorem 3.1.5]{montgomery93}.  Thus $M$ is injective over $\O$ in this case.  Similarly, if $M_K$ is projective over $\D_\psi$, then $M_K$ is projective over $\O_K$, and thus injective over $\O_K$ as well.  It follows that $M$ is injective over $\O$ itself.  So it suffices to assume that $M$ is injective over $\O$, and prove that in this case $M$ is injective over $\D$ if and only if $M_K$ is injective over $\D_\psi$ for all extensions $k\to K$ and embeddings $\psi:\mbb{G}_{a(s),K}\to G_K$.
\par

Let us assume that $M$ is injective over $\O$.  By Lemma \ref{lem:76}, all simple $\D$-representations are restrictions of simple $G$-representations along the projection $\D\to kG$.  It follows that we have a spectral sequence
\[
\Ext^\ast_G(S,\Ext^\ast_\O(k,M))\ \Rightarrow\ \Ext^\ast_{\D}(S,M)
\]
which reduces to an identification
\[
\Ext^\ast_G(S,\Hom_\O(k,M))=\Ext^\ast_{\D}(S,M),
\]
since $M$ is injective over $\O$.  Similarly, we have an identification
\[
\Ext^\ast_{\mbb{G}_{a(s),K}}(K,\Hom_{\O_K}(K,M_K))=\Ext^\ast_{\D_\psi}(K,M_K)
\]
at any embedded $1$-parameter subgroup $\psi:\mbb{G}_{a(s),K}\to G_K$.  Hence $M$ is injective over $\D$ (resp.\ $M_K$ is injective over $\D_\psi$) if and only if the invariant subspace $\Hom_\O(k,M)$ is injective over $G$ (resp.\ $\Hom_{\O_K}(K,M_K)$ is injective over $\mbb{G}_{a(s),K}$).
\par

Given the above information, we seek to establish the claim that
\[
\begin{array}{l}
\Hom_\O(k,M)\text{ is injective over }G\vspace{1mm}\\
\ \ \Leftrightarrow\ \ \text{for each }\psi:\mbb{G}_{a(s),K}\to G_K,\ \Hom_\O(k,M)_K=\Hom_{\O_K}(K,M_K)\\
\hspace{1cm}\text{is injective over }\mbb{G}_{a(s),K}.
\end{array}
\]
But this final claim follows by Theorem \ref{thm:pevtsova}.  Similarly, one refers to Theorem \ref{thm:sfb} in the case of finite-dimensional $M$ to obtain the desired result.
\end{proof}

\subsection{(Re)constructing cohomological support}

We consider cohomological support for \emph{finite-dimensional} representations over the Drinfeld double.  Fix an infinitesimal group scheme $G$ and let $\D$ denote its Drinfeld double $\D=D(G)$.  Recall our notation $|\D|$ for the projective spectrum of cohomology, $|\D|=\Proj\Ext^\ast_\D(k,k)$.  We have the following basic result of \cite{friedlandernegron18,negron}.

\begin{theorem}[\cite{friedlandernegron18,negron}]\label{thm:fn}
The Drinfeld double $\D$ has finite type cohomology.
\end{theorem}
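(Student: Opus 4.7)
The plan is to exploit the Hopf algebra extension
\[
\O(G)\hookrightarrow \D \twoheadrightarrow kG
\]
built into the smash product structure of $\D=\O(G)\#kG$, and to run the associated Lyndon-Hochschild-Serre spectral sequence
\[
E_2^{p,q}=\Ext^p_G\!\left(k,\Ext^q_{\O(G)}(k,k)\right)\ \Rightarrow\ \Ext^{p+q}_{\D}(k,k),
\]
where the $G$-action on $\Ext^\ast_{\O(G)}(k,k)$ is induced by the adjoint action on $\O(G)$. Since $G$ is a Frobenius kernel in a smooth algebraic group, $\O(G)$ is a finite-dimensional local complete intersection, essentially a truncated polynomial ring after a change of coordinates, so a standard Koszul-type computation identifies $\Ext^\ast_{\O(G)}(k,k)$ as the tensor product of an exterior algebra on a finite-dimensional space of degree-$1$ generators with a symmetric algebra on a finite-dimensional space of degree-$2$ generators. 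In particular the fiber algebra $\Ext^\ast_{\O(G)}(k,k)$ is itself finitely generated as an algebra, and its generating $G$-subrepresentation is finite-dimensional.

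With this concrete description, I would next show that the full $E_2$-page is finitely generated as a bigraded algebra. This combines Friedlander-Suslin finite generation for $H^\ast(G,-)$, applied isotype by isotype to the finite-dimensional graded pieces of $\Ext^\ast_{\O(G)}(k,k)$, with standard noetherianness on the bigraded ring, using that the generators form a finite-dimensional $G$-subrepresentation.

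The crux, and the main obstacle, is producing enough permanent cycles so that $E_\infty$ becomes a finite module over a finitely generated subalgebra of permanent cycles. Two natural sources are available. First, the edge $E_2^{\ast,0}=\Ext^\ast_G(k,k)$ consists of permanent cycles pulled back along the Hopf projection $\D\twoheadrightarrow kG$. Second, the $G$-invariant degree-$2$ generators of $\Ext^\ast_{\O(G)}(k,k)$ should lift to genuine classes in $\Ext^2_{\D}(k,k)$ via a deformation-theoretic construction. This is exactly the step where the specific geometric structure of $\mbb{G}$ becomes essential: a quasi-logarithm for $\mbb{G}$ (the hypothesis highlighted throughout the paper) provides an identification of $\O(G)$ with a symmetric algebra modulo Frobenius twist, which should facilitate constructing the cocycle lifts needed to realize these generators as permanent cycles. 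Verifying that the permanent cycles so produced actually detect $E_\infty$ up to a finite extension is the delicate point.

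Once condition (a) is established, condition (b) reduces via the standard adjunction $\Ext^\ast_{\D}(V,W)\cong \Ext^\ast_{\D}(k,V^\ast\ot W)$ to analyzing the analogous spectral sequence with coefficients in a finite-dimensional module $M=V^\ast\ot W$, namely $E_2^{p,q}=\Ext^p_G(k,\Ext^q_{\O(G)}(k,M))\Rightarrow \Ext^{p+q}_{\D}(k,M)$. Its $E_2$-page is a finitely generated module over the trivial-coefficient $E_2$ (because $M$ is finite-dimensional and $\Ext^\ast_{\O(G)}(k,M)$ is finitely generated over $\Ext^\ast_{\O(G)}(k,k)$), and combining this with the permanent-cycle analysis above yields finite generation of $\Ext^\ast_\D(k,M)$ over $\Ext^\ast_\D(k,k)$.
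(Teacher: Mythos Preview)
The paper does not actually prove this theorem; it is quoted without argument from the cited references \cite{friedlandernegron18,negron}. Your sketch is essentially the strategy of the first of these: run the Lyndon--Hochschild--Serre spectral sequence for the extension $\O(G)\to\D\to kG$, identify $\Ext^\ast_{\O(G)}(k,k)$ explicitly as an exterior-tensor-symmetric algebra, and then produce enough permanent cycles to force finite generation on $E_\infty$. You have correctly located the crux at the permanent-cycle step and correctly identified the quasi-logarithm as the input used there.

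Two points are worth flagging. First, the theorem as stated in this section is for an \emph{arbitrary} infinitesimal group scheme $G$, not only for Frobenius kernels in smooth groups admitting a quasi-logarithm; your argument invokes the quasi-logarithm in an essential way, so it addresses only the restricted case handled in \cite{friedlandernegron18}. The general statement is the content of \cite{negron} and requires a substantially different argument. Second, even in the quasi-logarithm case, your treatment of the crux is explicitly programmatic: you say the quasi-logarithm ``should facilitate constructing the cocycle lifts'' for the degree-$2$ generators, but carrying this out and verifying that the resulting permanent cycles detect $E_\infty$ up to a finite extension is most of the technical work in \cite{friedlandernegron18}, and nothing you have written indicates how that verification would go. So as a plan your outline matches one of the cited proofs, but it is not itself a proof, and it does not reach the generality asserted in the theorem.
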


We now apply Lemma \ref{lem:131} and Theorem \ref{thm:fn} to find 

\begin{corollary}\label{cor:452}
For any embedded $1$-parameter subgroup $\psi:\mbb{G}_{a(s),K}\to G_K$, the Hopf algebra $\D_\psi$ has finite type cohomology, and the induced map on projective spectra $\res^\ast_\psi:|\D_\psi|\to |\D_K|$ is a finite map of schemes.
\end{corollary}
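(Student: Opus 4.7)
The plan is to obtain this corollary as a direct consequence of Theorem \ref{thm:fn} together with Lemma \ref{lem:131}, applied to the Hopf inclusion $\D_\psi\hookrightarrow \D_K$ given by $\mrm{id}_{\O(G_K)}\ot \psi$.

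The first step is to upgrade Theorem \ref{thm:fn} from $\D$ to its base change $\D_K$. Since $k\to K$ is (faithfully) flat, one has $\Ext^\ast_{\D_K}(K,K)=\Ext^\ast_{\D}(k,k)\ot_k K$ as graded algebras, so finite generation of the cohomology ring is preserved. The module-finiteness condition for $\Ext^\ast_{\D_K}(V,W)$ at arbitrary finite-dimensional $\D_K$-modules $V,W$ reduces to the $k$-version via a standard Noetherianity-plus-flat-base-change argument, using that $\Ext^\ast_{\D_K}(V,W)$ is a finitely generated module over the base-changed cohomology ring whenever the analogous statement holds over $\D$.

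The second step is to apply Lemma \ref{lem:131} to the embedding $\D_\psi\hookrightarrow \D_K$. Part (1) yields that $\D_\psi$ has finite type cohomology, while part (2) supplies a finite homomorphism of graded rings
\[
\res^\ast:\Ext^\ast_{\D_K}(K,K)\to \Ext^\ast_{\D_\psi}(K,K)
\]
whose preimage of the irrelevant ideal on $\Spec$ consists only of the irrelevant ideal itself.

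Finally, I would translate this into a finite morphism on projective spectra. On each standard basic open $D_+(f)\subset |\D_K|$ for $f$ homogeneous, the preimage in $|\D_\psi|$ is covered by the opens $D_+(\res^\ast f)$ thanks to the irrelevance condition, and on each such open the degree-zero piece of the localized source is module-finite over the degree-zero piece of the localized target, since finiteness passes through localization and restriction to the degree-zero subring in a graded-finite extension. The only step that is not purely formal from the previously established results is the base-change verification for $\D_K$, but this is a routine consequence of the flatness of $k\to K$.
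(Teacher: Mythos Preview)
Your approach is correct and matches the paper's: the corollary follows directly from Theorem~\ref{thm:fn} and Lemma~\ref{lem:131} applied to the Hopf inclusion $\D_\psi\hookrightarrow\D_K$. One small simplification: rather than arguing finite type cohomology for $\D_K$ via base change (where your treatment of condition~(b) for arbitrary $\D_K$-modules is a bit loose), you can simply observe that $\D_K=D(G_K)$ is itself the Drinfeld double of an infinitesimal group scheme over $K$, so Theorem~\ref{thm:fn} applies to it directly.
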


Let us consider an arbitrary field extension $k\to K$.  We note that the natural map $K\ot \Ext^\ast_\D(k,k)\overset{\cong}\to \Ext^\ast_\D(K,K)$ is an isomorphism, and thus identifies the spectrum $|\D_K|$ with the base change $|\D|_K$.  For any embedded $1$-parameter subgroup $\psi:\mbb{G}_{a(s),K}\to G_K$, we therefore obtain a map of schemes
\begin{equation}\label{eq:f_psi}
f_\psi:|\D_\psi|\to |\D|
\end{equation}
given by composing the map $\res^\ast_\psi:|\D_\psi|\to |\D_K|$ induced by restriction with the projection $|\D_K|=|\D|_K\to |\D|$.
\par

We note that these $f_\psi$ are not closed morphisms in general.  This is simply because the projection $|\D|_K\to |\D|$ does not preserve closed points when the extension $k\to K$ is infinite.  On the other hand, we see that any point $x$ in $|\D|$ is represented by--or rather lifts to--a closed point in the base change $|\D|_{\bar{k}(x)}$.  So, by employing base change one is able to treat arbitrary points in the spectrum $|\D|$ as closed points, at least to a certain degree.  We record a little lemma.

\begin{lemma}\label{lem:297}
Consider any finite-dimensional $\D$-representation $V$.
\begin{enumerate}
\item For an arbitrary field extension $k\to K$, the support $|\D_K|_{V_K}$ of $V_K$ over $\D_K$ is precisely the preimage of $|\D|_V$ along the projection $|\D_K|\to |\D|$.  In particular, the composition $|\D_K|_{V_K}\subset |\D_K|\to |\D|$ is a surjection onto $|\D|_V$.
\item For any embedded $1$-parameter subgroup $\psi:\mbb{G}_{a(s),K}\to G$ the map $f_\psi$ restricts to a morphism between support spaces $|\D_\psi|_{V_K}\to |\D|_V$.  In particular, the image of $|\D_\psi|_{V_K}$ under $f_\psi$ is contained in $|\D|_V$.
\end{enumerate}
\end{lemma}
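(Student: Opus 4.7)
The plan is to deduce part (1) directly from flat base change for Ext, and then to deduce part (2) by combining (1) with a general fact about cohomological support under restriction to a Hopf subalgebra.

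For part (1), the starting point is that, since $V$ is finite-dimensional and $K$ is flat over $k$, applying $-\otimes_k K$ to a finitely generated projective resolution of $V$ over $\D$ produces a projective resolution of $V_K$ over $\D_K$, and yields a graded isomorphism
\[
K\ot_k\Ext^\ast_\D(V,V)\overset{\cong}\longrightarrow \Ext^\ast_{\D_K}(V_K,V_K).
\]
This isomorphism is compatible with the identification $K\ot_k\Ext^\ast_\D(k,k)\cong \Ext^\ast_{\D_K}(K,K)$ recalled in the text, and with the respective module structures. Consequently the coherent sheaf $\Ext^\ast_{\D_K}(V_K,V_K)^\sim$ on $|\D_K|$ is the pullback of $\Ext^\ast_\D(V,V)^\sim$ along the (flat) projection $\pi:|\D_K|\to|\D|$. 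Supports of coherent sheaves pull back to preimages under flat morphisms, which gives $|\D_K|_{V_K}=\pi^{-1}(|\D|_V)$. The ``in particular'' surjectivity statement is then immediate: $\pi$ is surjective as a base change of the surjection $\Spec K\to\Spec k$, so its restriction to the preimage $|\D_K|_{V_K}$ surjects onto $|\D|_V$.

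For part (2), write $f_\psi=\pi\circ\res_\psi^\ast$. By (1), it suffices to verify $\res_\psi^\ast(|\D_\psi|_{V_K})\subseteq|\D_K|_{V_K}$. By the Nichols-Zoeller theorem, $\D_K$ is free as a $\D_\psi$-module, so induction $\D_K\ot_{\D_\psi}(-)$ is exact, left adjoint to restriction, and preserves projectives. Frobenius reciprocity therefore gives an isomorphism
\[
\Ext^\ast_{\D_\psi}(V_K,V_K)\ \cong\ \Ext^\ast_{\D_K}\bigl(\D_K\ot_{\D_\psi}V_K,\,V_K\bigr)
\]
of $\Ext^\ast_{\D_K}(K,K)$-modules. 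Next, I would invoke the standard fact that for any $M\in \rep(\D_K)$, the support of $\Ext^\ast_{\D_K}(M,V_K)$ as an $\Ext^\ast_{\D_K}(K,K)$-module is contained in $|\D_K|_{V_K}$: any $\zeta\in \Ext^\ast_{\D_K}(K,K)$ whose image $\zeta\ot \mrm{id}_{V_K}$ annihilates $\Ext^\ast_{\D_K}(V_K,V_K)$ automatically annihilates $\Ext^\ast_{\D_K}(M,V_K)$ by left Yoneda composition. Hence $\operatorname{Supp}_{|\D_K|}\Ext^\ast_{\D_\psi}(V_K,V_K)\subseteq |\D_K|_{V_K}$. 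Finally, by the general compatibility of support with finite ring extensions applied to the finite map of Lemma \ref{lem:131}(2) (and Corollary \ref{cor:452}), this $\Ext^\ast_{\D_K}(K,K)$-support agrees with $\res_\psi^\ast$ of the $\Ext^\ast_{\D_\psi}(K,K)$-support, which is $\res_\psi^\ast(|\D_\psi|_{V_K})$. Combining with part (1) yields $f_\psi(|\D_\psi|_{V_K})\subseteq|\D|_V$.

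The main technical care needed is bookkeeping: one must verify that the $\Ext^\ast_{\D_K}(K,K)$-module structure on $\Ext^\ast_{\D_\psi}(V_K,V_K)$ inherited via Frobenius reciprocity agrees with the one obtained by restriction along $\res:\Ext^\ast_{\D_K}(K,K)\to \Ext^\ast_{\D_\psi}(K,K)$, and similarly that the base change isomorphisms in (1) respect the module structures on both sides. Modulo these routine compatibilities, neither part presents a substantial obstacle; the content is entirely in the flat base change identification for Ext and in the functoriality of cohomological support under induction-restriction.
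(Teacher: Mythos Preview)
Your argument for part (1) is correct and essentially matches the paper's: both identify $\Ext^\ast_{\D_K}(V_K,V_K)^\sim$ as the pullback of $\Ext^\ast_\D(V,V)^\sim$ and invoke the general fact that support of a coherent sheaf pulls back to the preimage under any morphism, together with surjectivity of $|\D_K|\to|\D|$.

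For part (2), your argument via Frobenius reciprocity is correct but considerably more elaborate than necessary. The paper's proof is a one-liner: since restriction $\res_\psi$ is a tensor functor, the square
\[
\xymatrix{
\Ext^\ast_\D(k,k)\ar[r]^{-\ot V}\ar[d]_{\res_\psi} & \Ext^\ast_\D(V,V)\ar[d]^{\res_\psi}\\
\Ext^\ast_{\D_\psi}(k,k)\ar[r]^{-\ot V} & \Ext^\ast_{\D_\psi}(V,V)
}
\]
commutes, and the supports $|\D|_V$, $|\D_\psi|_V$ are precisely the vanishing loci of the kernels of the horizontal maps. Commutativity gives $\res_\psi(\ker_\D)\subset\ker_{\D_\psi}$, which immediately yields $\res_\psi^\ast(|\D_\psi|_V)\subset|\D|_V$. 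Your route through Eckmann--Shapiro, the containment $\operatorname{Supp}\Ext^\ast_{\D_K}(M,V_K)\subset|\D_K|_{V_K}$, and the behavior of support under finite ring maps is valid, but it trades a direct two-line diagram chase for machinery that requires the very compatibility checks you flag as ``bookkeeping'' (that the Frobenius reciprocity isomorphism intertwines the two $\Ext^\ast_{\D_K}(K,K)$-actions). The paper's approach avoids all of this by using only that $\res_\psi$ respects $\ot$; your approach would generalize better to situations where the subalgebra is not a Hopf subalgebra, but that generality is not needed here.
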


\begin{proof}
Statement (1) follows from the fact that (a) For any scheme $X$, the projection $X_K\to X$ along a field extension $k\to K$ is surjective and (b) for any map of schemes $f:Y\to X$, and coherent sheaf $\msc{F}$ on $X$, we have $\operatorname{Supp}(f^\ast \msc{F})=f^{-1}\operatorname{Supp}(\msc{F})$.  For (2) it suffices to prove the result in the case $K=k$, by (1).  We simply consider the diagram
\[
\xymatrix{
\Ext^\ast_\D(k,k)\ar[r]^{-\ot V}\ar[d]_{\res_\psi} & \Ext^\ast_\D(V,V)\ar[d]^{\res_\psi}\\
\Ext^\ast_{\D_\psi}(k,k)\ar[r]^{-\ot V} & \Ext^\ast_{\D_\psi}(V,V)
}
\]
induced by the restriction functors, and note that the supports $|\D|_V$ and $|\D_\psi|_V$ are the subvarieties associated to the respective kernels of the algebra maps $-\ot V$.
\end{proof}

We now observe that the support of $V$ over $\D$ can be reconstructed from the supports of $V$ over the $\D_\psi$, where we allow $\psi$ to vary along all $1$-parameter subgroups for $G$.

\begin{proposition}\label{prop:reconstruct}
Let $G$ be an infinitesimal group scheme and $\mfk{D}=D(G)$ be the associated Drinfeld double.  For any finite-dimensional $\D$-representation $V$ there is an equality
\begin{equation}\label{eq:327}
|\D|_V=\bigcup_{1\text{\rm-param subgroups}}f_\psi\left(|\D_\psi|_{V_K}\right).
\end{equation}
\end{proposition}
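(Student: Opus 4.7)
The containment $\supseteq$ is immediate from Lemma \ref{lem:297}(2), so the work is in establishing the containment $\subseteq$. Fix a point $x \in |\D|_V$. The plan is to show that $x$ lies in $f_\psi(|\D_\psi|_{V_K})$ for a suitably chosen $1$-parameter subgroup $\psi$, and the key device will be a Carlson module whose support concentrates at $x$, combined with the projectivity test of Theorem \ref{thm:proj_check}.

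Step 1: Reduce to a closed point over an algebraically closed field. Let $K = \overline{k(x)}$ be the algebraic closure of the residue field of $x$. By Lemma \ref{lem:297}(1), the projection $|\D_K| \to |\D|$ carries $|\D_K|_{V_K}$ onto $|\D|_V$, so we may lift $x$ to a point $\tilde{x} \in |\D_K|_{V_K}$, and by our choice of $K$ we can arrange for $\tilde{x}$ to be a closed point of $|\D_K|$. Since $K$ is algebraically closed and $V_K$ is finite-dimensional over $\D_K$, we have reduced to proving the following: every closed point $\tilde{x}$ of $|\D_K|_{V_K}$ lies in the image $\res^\ast_\psi(|\D_\psi|_{V_K})$ for some embedded $1$-parameter subgroup $\psi : \mathbb{G}_{a(s),K} \to G_K$ defined over $K$.

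Step 2: Realize $\{\tilde{x}\}$ as a Carlson support and localize. By Corollary \ref{cor:174} applied over $\D_K$, choose homogeneous extensions $\zeta_1,\dots,\zeta_n \in \Ext^{ev}_{\D_K}(K,K)$ so that the associated Carlson module $L := L_{\zeta_1} \otimes \cdots \otimes L_{\zeta_n}$ has support $\{\tilde{x}\}$ in $|\D_K|$. Iterating Proposition \ref{prop:carlson} yields
\[
|\D_K|_{L \otimes V_K} \;=\; Z(\zeta_1)\cap\cdots\cap Z(\zeta_n)\cap |\D_K|_{V_K} \;=\; \{\tilde{x}\},
\]
which is nonempty, so $L \otimes V_K$ is not projective over $\D_K$.

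Step 3: Apply the projectivity test and identify the $1$-parameter subgroup. Since $L \otimes V_K$ is finite-dimensional and $K$ is algebraically closed, the second assertion of Theorem \ref{thm:proj_check} produces an embedded $1$-parameter subgroup $\psi : \mathbb{G}_{a(s),K} \to G_K$ defined over $K$ such that the restriction $\res_\psi(L \otimes V_K)$ fails to be projective over $\D_\psi$, i.e.\ $|\D_\psi|_{L \otimes V_K}$ is nonempty. By Lemma \ref{lem:159}, each $\res_\psi(L_{\zeta_i})$ is a Carlson module for $\res_\psi(\zeta_i) \in \Ext^\ast_{\D_\psi}(K,K)$, and since restriction is a tensor functor, iterating Proposition \ref{prop:carlson} over $\D_\psi$ gives
\[
|\D_\psi|_{L \otimes V_K} \;=\; Z(\res_\psi \zeta_1)\cap\cdots\cap Z(\res_\psi \zeta_n)\cap |\D_\psi|_{V_K} \;=\; (\res^\ast_\psi)^{-1}(\tilde{x})\cap |\D_\psi|_{V_K}.
\]
Nonemptiness of this intersection produces $y \in |\D_\psi|_{V_K}$ with $\res^\ast_\psi(y) = \tilde{x}$, and composing with the projection $|\D_K| \to |\D|$ (which sends $\tilde{x}$ to $x$) yields $f_\psi(y) = x$, as required.

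The main technical point—and the only place where anything delicate is happening—is Step 3, where one must verify that the Carlson-module description of support is compatible with restriction along $\D_\psi \hookrightarrow \D_K$. This rests on Lemma \ref{lem:159} together with the fact that restriction commutes with tensor products, and on the general principle that the vanishing locus of a restricted extension is the scheme-theoretic preimage of the original vanishing locus under $\res^\ast_\psi$. The remaining parts of the argument are formal once one has Theorem \ref{thm:proj_check}, Corollary \ref{cor:174}, and Lemma \ref{lem:297} in hand.
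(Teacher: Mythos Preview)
Your proof is correct and follows essentially the same approach as the paper: reduce to a closed point over an algebraically closed field, concentrate support at that point via a product of Carlson modules, invoke the projectivity test of Theorem \ref{thm:proj_check} to locate a $1$-parameter subgroup $\psi$ where the restriction is non-projective, and then unwind using Lemma \ref{lem:159} and Proposition \ref{prop:carlson}. Your write-up is in fact a bit more explicit than the paper's about the base-change step and the identification $Z(\res_\psi\zeta_i)=(\res^\ast_\psi)^{-1}(Z(\zeta_i))$, but the underlying argument is the same.
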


To be clear, the equality \eqref{eq:327} is an equality of \emph{sets}.  Indeed, the support of a representation is itself simply a closed subset in the space $|\D|$.  Also, the union \eqref{eq:327} is explicitly taken over the collection of all embedded $1$-parameter subgroups in $G$, each of which consists of a pair of a field extension $K/k$ and an embedding $\psi:\mbb{G}_{a(s),K}\to G_K$.

\begin{proof}
If the support $|\D|_V$ vanishes, i.e.\ if $V$ is projective over $\D$, then Theorem \ref{thm:proj_check} tells us that all of the supports $|\D_\psi|_{V_K}$ vanish as well.  So the claimed equality holds when the support $|\D|_V$ is empty.
\par

Let us assume now that $V$ is \emph{not} projective over $\D$, and hence that the support $|\D|_V$ is non-vanishing.  By considering base change, and Lemma \ref{lem:297}, we see that the equality \eqref{eq:327} can be obtained from the following claim:
\vspace{2mm}

\noindent Claim:\emph{ When $k$ is algebraically closed, and $x$ is a closed point in $|\D|_V$, there is a $1$-parameter subgroup $\psi:\mbb{G}_{a(s)}\to G$ such that $x$ is in the image $f_\psi(|\D_\psi|_V)$.}
\vspace{2mm}

\noindent Let us verify this claim.
\par

We suppose that $k=\bar{k}$ and consider a closed point $x$ in $|\D|_V$.  Let $L$ be a product of Carlson modules with $|\D|_L=\{x\}$.  Then $|\D|_{L\ot V}=\{x\}$ and for any $1$-parameter subgroup $\psi:\mbb{G}_{a(s)}\to G$ we have
\[
f_\psi\left(|\D_\psi|_{L\ot V}\right)=\left\{
\begin{array}{ll}
\{x\} & \text{if }x\in f_\psi(|\D_\psi|_V)\vspace{1mm}\\
\emptyset & \text{else}.
\end{array}\right.
\]
Indeed, the above formula follows from the fact that $|\D_\psi|_L=f_\psi^{-1}(x)$, by Lemma \ref{lem:159}, and the subsequent fact that
\[
|\D_\psi|_{L\ot V}= f_\psi^{-1}(x)\cap |\D_\psi|_V,
\]
by Proposition \ref{prop:carlson}.
\par

Recall that, by the projectivity test of Theorem \ref{thm:proj_check}, projectivity of the restriction of $L\ot V$ along each such $\psi$ would imply that $L\ot V$ is projective over $\D$.  Equivalently, vanishing of the supports $|\D_\psi|_{L\ot V}$ along all such $\psi$ would imply vanishing of the support $|\D|_{L\ot V}$.  Since we have chosen $L$ so that the latter space explicitly \emph{does not} vanish, we conclude that some support space $|\D_\psi|_{L\ot V}$ does not vanish.  Rather, $x\in f_\psi(|\D_\psi|_{L\ot V})$ for some $\psi$, and thus $x\in f_\psi(|\D_\psi|_V)$ for some $\psi$.  So we have proved the above Claim, and thus establish the identification \eqref{eq:327}.
\end{proof}

We remark, in closing, that one can prove analogs of the results of this section for arbitrary finite (rather than infinitesimal) group schemes.  One simply replaces the ``testing groups" $\mbb{G}_{a(s)}$ with a larger class of unipotent group schemes (cf.\ \cite{friedlanderpevtsova05}).

\section{Quasi-logarithms for group schemes}
\label{sect:qlog}

In this short aside we recall the notion of a quasi-logarithm for an affine group scheme.  As we recall below, ``most" familiar algebraic groups admit quasi-logarithms.  One can see Proposition \ref{prop:44} in particular.  As our study of support for Drinfeld doubles becomes more focused, we employ quasi-logarithms to gain some leverage on the \emph{algebra} structure of the double $\D=D(G)$.

\subsection{Quasi-logarithms}

\begin{definition}[\cite{kazhdanvarshavsky}]
Let $G$ be an affine group scheme with Lie algebra $\mfk{g}$.  We consider $\mfk{g}$ as an affine scheme $\mfk{g}=\Spec(\Sym(\mfk{g}^\ast))$.  A quasi-logarithm for $G$ is a map of schemes $l:G\to \mfk{g}$ which
\begin{enumerate}
\item[(a)] is equivariant for the adjoint actions,
\item[(b)] sends $1\in G$ to $\{0\}\in \mfk{g}$,
\item[(c)] induces the identity on tangent spaces $T_1l=id_\mfk{g}$.
\end{enumerate}
\end{definition}

Concretely, if we let $m\subset \O(G)$ denote the maximal ideal associated to the point $1\in G$, then a quasi-logarithm for $G$ is a choice of ad-equivariant splitting $\mfk{g}^\ast\to m$ of the projection $m\to m/m^2=\mfk{g}^\ast$.  We note that, when $G$ is smooth over the base field $k$, such a quasi-logarithm induces an isomorphism on the respective formal neighborhoods $\hat{l}:\hat{G}_1\overset{\cong}\to \hat{\mfk{g}}_0$.  Also, when $G$ is infinitesimal any quasi-logarithm is a closed embedding.
\par

The following lemma is straightforward.

\begin{lemma}
Suppose a group scheme $\mbb{G}$ admits a quasi-logarithm $l:\mbb{G}\to \mfk{g}$.  Then for any positive integer $r$, the restriction $l|_{\mbb{G}_{(r)}}:\mbb{G}_{(r)}\to \mfk{g}$ provides a quasi-logarithm for the Frobenius kernel $\mbb{G}_{(r)}$.
\end{lemma}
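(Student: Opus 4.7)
The plan is to verify directly that $l|_{\mbb{G}_{(r)}}$ satisfies the three defining conditions of a quasi-logarithm. The foundational observation is that $\mbb{G}_{(r)}$, as the $r$-th Frobenius kernel, is a closed normal subgroup scheme of $\mbb{G}$ whose Lie algebra coincides with $\mfk{g}$, so the target of the restriction is still the Lie algebra of the group in question.

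First I would handle conditions (b) and (c), which are statements at the single point $1$ and so pass to the subgroup essentially for free: since $1 \in \mbb{G}_{(r)}$ we immediately get $l|_{\mbb{G}_{(r)}}(1) = l(1) = 0$, and since the closed embedding $\mbb{G}_{(r)} \hookrightarrow \mbb{G}$ is an isomorphism on tangent spaces at the identity, the differential at $1$ of the restriction agrees with $T_1 l = id_\mfk{g}$.

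The only substantive point is the equivariance condition (a). Here I would argue that the adjoint action of $\mbb{G}$ on itself preserves the normal subgroup $\mbb{G}_{(r)}$ and hence restricts to the adjoint action of $\mbb{G}_{(r)}$ on $\mbb{G}_{(r)}$; similarly, restricting the adjoint action of $\mbb{G}$ on $\mfk{g}$ to $\mbb{G}_{(r)}$ recovers the adjoint action of $\mbb{G}_{(r)}$ on $\mfk{g}$. Given this, the equivariance of $l$ for the $\mbb{G}$-action restricts directly to equivariance of $l|_{\mbb{G}_{(r)}}$ for the $\mbb{G}_{(r)}$-action.

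There is no real obstacle here; the lemma is essentially a formal consistency check that the notion of quasi-logarithm behaves well under passage to Frobenius kernels. The only thing one should be careful about is to confirm that both the source and the target $\mbb{G}$-actions appearing in the equivariance condition restrict to genuine $\mbb{G}_{(r)}$-actions, which follows from normality of the Frobenius kernel and the fact that $\mfk{g}$ is intrinsic to the formal neighborhood of the identity.
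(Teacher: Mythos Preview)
Your verification is correct and is exactly the straightforward check one would expect; the paper in fact omits the proof entirely, simply labeling the lemma as ``straightforward,'' so your argument fills in precisely the routine details the authors left to the reader.
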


Through the remainder of the text we often adopt the following hypotheses: We assume $\mbb{G}$ is a smooth algebraic group which admits a quasi-logarithm, then consider the Frobenius kernels $G=\mbb{G}_{(r)}$ at arbitrary $r>0$.  The previous lemma tells us that all such $G$ naturally inherit quasi-logarithms from any choice of quasi-logarithm for the ambient group $\mbb{G}$.  So in this way we obtain various families of infinitesimal group schemes which admit quasi-logarithms.

\subsection{Appearances of quasi-logs in nature}

We discuss the ``generic" presence of quasi-logarithms among reductive algebraic groups.  Let $\mbf{G}$ be an affine algebraic group which is defined over a localization $R=\mbb{Z}[1/n]$ of the integers, and suppose that $\mbf{G}$ is generically reductive.  That is to say, suppose that the rational form $\mbf{G}_\mbb{Q}$ is reductive.  Take $\O=\O(\mbf{G})$.
\par

Let $m\subset \O$ be the ideal associated to the identity $1\in \mbf{G}(R)$, and consider the coadjoint representation $\mfk{g}^\ast=m/m^2$.  The surjection $m\to \mfk{g}^\ast$ admits an ad-equivariant splitting $\mfk{g}^\ast_\mbb{Q}\to m_\mbb{Q}\subset \O_\mbb{Q}$ over the rationals, since $\mbf{G}_\mbb{Q}$ has semisimple representation theory \cite[Theorem 22.42]{milne17}.  This splitting is defined over a further localization $R'=\mbb{Z}[1/N]$, so that we obtain a quasi-logarithm $\mbf{G}_{R'}\to \mfk{g}_{R'}$ defined over $R'$.  It follows that for any field $k$ of characteristic $p$ which does not divide $N$, the group $\mbb{G}=\mbf{G}_k$ admits a quasi-logarithm.  We record this observation.

\begin{proposition}
Let $\mbf{G}$ be a algebraic group which is defined over a localization $R=\mbb{Z}[1/n]$ of the integers, and suppose that $\mbf{G}$ is generically reductive.  Then for any field $k$, in all but finitely many characteristics, the $k$-form $\mbb{G}=\mbf{G}_k$ admits a quasi-logarithm. 
\end{proposition}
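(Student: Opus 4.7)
The plan is to formalize the sketch given just before the statement. Writing $\O = \O(\mbf{G})$ and $m \subset \O$ for the ideal of the identity section $1 \in \mbf{G}(R)$, set $\mfk{g}^{\ast} = m/m^{2}$, a finitely generated $R$-module carrying the coadjoint action. As noted directly after the definition of a quasi-logarithm, producing a quasi-logarithm for $\mbf{G}_{R'}$ (over some further localization $R' = \mbb{Z}[1/N]$) is equivalent to producing an ad-equivariant $R'$-linear splitting $s : \mfk{g}^{\ast}_{R'} \to m_{R'}$ of the projection $m_{R'} \to \mfk{g}^{\ast}_{R'}$, because such an $s$ extends by the universal property of the symmetric algebra to an $R'$-algebra map $\Sym(\mfk{g}^{\ast}_{R'}) \to \O(\mbf{G}_{R'})$, whose associated morphism of schemes $\mbf{G}_{R'} \to \mfk{g}_{R'}$ satisfies conditions (a)--(c) by construction. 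Once such an $s$ is in hand, base change along $R' \to k$ gives a quasi-logarithm for $\mbb{G} = \mbf{G}_{k}$ whenever the characteristic of $k$ does not divide $N$, and the exceptional characteristics are precisely the finitely many prime divisors of $N$.

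I will produce $s$ in two steps. First, over $\mbb{Q}$: local finiteness of the $\mbf{G}_{\mbb{Q}}$-action on $\O_{\mbb{Q}}$ gives a finite-dimensional $\mbf{G}_{\mbb{Q}}$-subrepresentation $V \subset m_{\mbb{Q}}$ that surjects onto $\mfk{g}^{\ast}_{\mbb{Q}}$, and reductivity of $\mbf{G}_{\mbb{Q}}$ makes the category of finite-dimensional rational $\mbf{G}_{\mbb{Q}}$-representations semisimple by \cite[Theorem 22.42]{milne17}. The surjection $V \twoheadrightarrow \mfk{g}^{\ast}_{\mbb{Q}}$ therefore admits a $\mbf{G}_{\mbb{Q}}$-equivariant section $s_{\mbb{Q}} : \mfk{g}^{\ast}_{\mbb{Q}} \to V \hookrightarrow m_{\mbb{Q}}$. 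Second, I descend the values: fix a finite $R$-generating set $\xi_{1}, \dots, \xi_{d}$ of $\mfk{g}^{\ast}$, and choose $N$ (divisible by $n$) large enough that each $s_{\mbb{Q}}(\xi_{i}) \in m_{\mbb{Q}}$ already lies in $m_{R'}$, where $R' = \mbb{Z}[1/N]$. After enlarging $N$ so that $\mbf{G}$ is flat over $R'$, the target $m_{R'}$ is $R'$-torsion-free and hence embeds in $m_{\mbb{Q}}$; any $R'$-linear relation satisfied by the $\xi_{i}$ in $\mfk{g}^{\ast}_{R'}$ is in particular a rational relation, so its image under the would-be map lies in the kernel of $m_{R'} \hookrightarrow m_{\mbb{Q}}$, which is trivial. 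The values $s_{\mbb{Q}}(\xi_{i})$ therefore assemble into a well-defined $R'$-linear splitting $s : \mfk{g}^{\ast}_{R'} \to m_{R'}$.

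The main technical point is the descent of \emph{equivariance}. A priori $s$ is only $R'$-linear; ad-equivariance is the equality of two $R'$-linear maps $\mfk{g}^{\ast}_{R'} \to m_{R'} \otimes_{R'} \O(\mbf{G}_{R'})$, namely the two sides of the coaction identity. These maps agree after applying $- \otimes_{R'} \mbb{Q}$, by equivariance of $s_{\mbb{Q}}$. Since the target is $R'$-torsion-free, the embedding $m_{R'} \otimes_{R'} \O(\mbf{G}_{R'}) \hookrightarrow m_{\mbb{Q}} \otimes_{\mbb{Q}} \O(\mbf{G}_{\mbb{Q}})$ forces the two maps to agree already over $R'$. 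This furnishes the required equivariant splitting, completing the construction. The only place real care is needed is in this descent step, where one must check that finitely many primes suffice both to move the values of $s_{\mbb{Q}}$ into $m_{R'}$ and to guarantee the flatness/torsion-freeness invoked above; everything else is a direct unwinding of the definition of a quasi-logarithm.
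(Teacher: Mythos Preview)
Your proof is correct and follows essentially the same approach as the paper, which records the proposition as a direct consequence of the discussion immediately preceding it: split equivariantly over $\mbb{Q}$ using semisimplicity of $\mbf{G}_{\mbb{Q}}$-representations, then descend to a localization $R'=\mbb{Z}[1/N]$ and base change. Your version is simply a more careful unpacking of the descent step (local finiteness to reduce to a finite-dimensional piece, clearing denominators on generators, and verifying equivariance via torsion-freeness), all of which the paper leaves implicit.
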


If we consider split semisimple algebraic groups, for example, we can be much more precise about the characteristics at which our group $\mbb{G}=\mbf{G}_k$ admits a quasi-logarithm.  We can also deduce quasi-logarithms for various classes of algebraic groups which are related to such semisimple $\mbb{G}$.

\begin{proposition}[{\cite[\S 6.1]{friedlandernegron18}}]\label{prop:44}
The following algebraic groups admit a quasi-logarithm:
\begin{itemize}
\item The general linear group $\operatorname{GL}_n$, over any field in any characteristic.
\item Any split simple algebraic group in very good characteristic (relative to the corresponding Dynkin type).
\item Any Borel subgroup inside a split simple algebraic group, in very good characteristic.
\item The unipotent radical in such a Borel, in sufficiently large characteristic.
\end{itemize}
\end{proposition}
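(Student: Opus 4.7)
The plan is to construct quasi-logarithms explicitly in each of the four listed cases, proceeding from the easier to the harder. For $\operatorname{GL}_n$ the matter is essentially formal: I would take $l:\operatorname{GL}_n\to\mathfrak{gl}_n$ given by $g\mapsto g-I$ in the standard matrix coordinates. This is a polynomial morphism defined over $\mathbb{Z}$, sends $I$ to $0$, is conjugation-equivariant since $hgh^{-1}-I=h(g-I)h^{-1}$, and has differential the identity at $I$ since $I+\epsilon X$ maps to $\epsilon X$. Hence the first bullet holds in every characteristic, with no further hypotheses.

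For a split simple algebraic group $\mathbb{G}$ in very good characteristic, producing a quasi-logarithm is equivalent to producing an ad-equivariant splitting of $m\to m/m^2=\mathfrak{g}^*$, where $m\subset\O(\mathbb{G})$ is the augmentation ideal at the identity. My plan has two steps: (a) use the existence of a nondegenerate $\mathbb{G}$-invariant bilinear form on $\mathfrak{g}$ in very good characteristic to identify $\mathfrak{g}\cong\mathfrak{g}^*$ as $\mathbb{G}$-modules, and (b) produce the splitting by descending a rational splitting along the Chevalley integral form. Concretely, over $\mathbb{Q}$ the splitting exists by complete reducibility of $\mathbb{G}_\mathbb{Q}$-representations; such a rational splitting is defined over a localization $R=\mathbb{Z}[1/N]$ whose inverted primes are precisely the bad primes for $\mathbb{G}$; and by the very-good-characteristic hypothesis $\operatorname{char}(k)$ is invertible in $R$, so the $R$-splitting reduces modulo $p$ to give the desired $k$-rational quasi-logarithm. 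For a Borel subgroup $B\subset\mathbb{G}$ one does \emph{not} obtain a quasi-log by restriction (the target Lie algebra would be wrong), so I would instead run the analogous argument with $B$ in place of $\mathbb{G}$: organize $\O(B)$ and $\mathfrak{b}^*$ by their $T$-weight decompositions, build the splitting $T$-equivariantly one weight at a time, and promote $T$-equivariance to $B$-equivariance using the $T\ltimes U$ structure and the unipotence of $U$.

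For the unipotent radical $U$ of a Borel in sufficiently large characteristic, I would use a truncated exponential: whenever $p$ exceeds the nilpotency class of $\mathfrak{u}$, the Baker--Campbell--Hausdorff series is finite and the map $\exp:\mathfrak{u}\to U$ is a well-defined ad-equivariant isomorphism of varieties, whose inverse $\log$ serves as the quasi-logarithm. The principal technical obstacle across the whole argument lies in step (b) of the semisimple case: the ad-equivariance of a splitting of $m\to\mathfrak{g}^*$ is not automatic in positive characteristic because rational $\mathbb{G}$-representations need not be semisimple. This is precisely the role of the very-good-characteristic hypothesis, which ensures that the cohomological obstruction to splitting $0\to m^2\to m\to\mathfrak{g}^*\to 0$ vanishes after inverting the bad primes, so that the splitting is defined over the relevant localization of $\mathbb{Z}$ and specializes to $k$.
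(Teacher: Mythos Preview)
The paper does not supply its own proof of this proposition; it simply cites \cite[\S 6.1]{friedlandernegron18}. So your proposal should be measured against the argument found there (which goes back to Kazhdan--Varshavsky \cite{kazhdanvarshavsky}). Your treatments of $\operatorname{GL}_n$ and of the unipotent radical via truncated exponentials are standard and fine.

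The genuine gap is in the split simple case. You assert that a rational $\mbb{G}_{\mbb{Q}}$-equivariant splitting of $m\to \mfk{g}^*$ ``is defined over a localization $R=\mbb{Z}[1/N]$ whose inverted primes are precisely the bad primes for $\mbb{G}$,'' and later that ``the cohomological obstruction \dots\ vanishes after inverting the bad primes.'' Neither claim is justified, and neither is obvious: the paper's own preceding discussion only establishes that such a splitting exists after inverting \emph{some} finite set of primes, with no control over which. Identifying that set with the bad primes is exactly the content you are trying to prove, so the argument as written is circular. Note also that $m$ is infinite-dimensional, so the relevant obstruction group is not a finite-rank $\mbb{Z}$-module and the ``clear denominators'' step needs real care.

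The argument in \cite{kazhdanvarshavsky,friedlandernegron18} is different and avoids this difficulty entirely. One fixes a faithful representation $\rho:\mbb{G}\hookrightarrow \operatorname{GL}(V)$, uses the quasi-logarithm $g\mapsto \rho(g)-I$ to land in $\mfk{gl}(V)$, and then projects $\mfk{gl}(V)\to\mfk{g}$ using the trace form $(X,Y)\mapsto \operatorname{tr}(d\rho(X)\, d\rho(Y))$. The role of very good characteristic is now transparent: it is precisely the hypothesis under which this form is nondegenerate on $\mfk{g}$, so that the orthogonal projection exists and is $\mbb{G}$-equivariant. Your step (a) already produces the relevant invariant form; you should use it directly rather than passing through an unquantified descent argument. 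For the Borel, the same mechanism applies: compose $B\hookrightarrow\mbb{G}\to\mfk{g}$ with a $B$-equivariant projection $\mfk{g}\to\mfk{b}$, whose existence again reduces to linear algebra with the invariant form. Your proposed route via $T$-weights and ``promoting $T$-equivariance to $B$-equivariance using unipotence of $U$'' is not wrong in spirit, but the promotion step is not automatic and you have not indicated how it goes.
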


\section{The Drinfeld double $\D$ via an infinitesimal group scheme}
\label{sect:Sigma}

Let $\mbb{G}$ be a smooth algebraic group over $k$ which admits a quasi-logarithm, and let $G$ be a Frobenius kernel in $\mbb{G}$.  We consider the Drinfeld double $\D$ for $G$.  In this section we show that, for $G$ as described, there is a \emph{linear abelian} equivalence
\[
\mcl{L}:\rep(\D)\overset{\sim}\to \rep(\Sigma)
\]
between the representation category of the double and the representation category of an associated infinitesimal group scheme $\Sigma$.  We show, furthermore, that this equivalence restricts to a corresponding abelian equivalence $\mcl{L}_\psi:\rep(\D_\psi)\overset{\sim}\to \rep(\Sigma_\psi)$ at all embedded $1$-parameter subgroups in $G$.

Although these equivalences are not equivalences of tensor categories, they can be used in highly nontrivial ways in an analysis of support for the double, as we will see in Sections \ref{sect:tensor_fd} and \ref{sect:bigtensor}.

\subsection{The group schemes $\Sigma_V(G,r)$}

Consider a finite group scheme $G$ and any finite-dimensional $G$-representation $V$.  To $V$ we associate the algebra
\[
S_r(V):=\Sym(V)/(v^{p^r}:v\in V).
\]
This algebra has the natural structure of a cocommutative Hopf algebra in the symmetric tensor category $\rep(G)$, where the coproduct on $S_r(V)$ is defined by taking all of the generators $v\in V$ to be primitive $\Delta(v)=v\ot 1+1\ot v$ (cf.\ \cite[\S 1.3]{andruskiewitschschneider}).  Indeed, we may view $V$ as an abelian Lie algebra in $\rep(G)$, and consider the universal enveloping algebra $U(V)=\Sym(V)$.  We then obtain $S_r(V)$ as the quotient of $U(V)$ by the Hopf ideal generated by the primitive elements $v^{p^r}$, $v\in V$.
\par

Now, since the forgetful functor $\rep(G)\to Vect$ is a map of symmetric tensor categories, any Hopf algebra in $\rep(G)$ can be viewed immediately as a Hopf algebra in the classical sense, i.e.\ as a Hopf algebra in $Vect$.  So we may view $S_r(V)$ as a Hopf algebra in $\rep(G)$ or as a Hopf algebra in $Vect$ as needed.  Furthermore, for any Hopf algebra $S$ in $\rep(G)$ the smash product $S\# kG$ admits a unique Hopf algebra structure (in $Vect$) so that the two inclusions
\[
S\to S\# kG\ \ \text{and}\ \ kG\to S\# kG
\]
are maps of Hopf algebras (in $Vect$).  Indeed, this is the standard bosonization procedure \cite[Theorem 1.6.9]{radford}.  So, in the case discussed above, we obtain the following.

\begin{lemma}
For any finite group scheme $G$ and any finite-dimensional $G$-representation $V$, the smash product $S_r(V)\# kG$ admits a unique cocommutative Hopf algebra structure (in $Vect$) such that the following conditions hold:
\begin{enumerate}
\item[(a)] Each $v\in V$ is primitive.
\item[(b)] The inclusion $kG\to S_r(V)\# kG$ is a map of Hopf algebras.
\end{enumerate}
\end{lemma}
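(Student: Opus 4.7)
The plan is to establish existence by invoking the standard bosonization (Radford biproduct) construction and uniqueness by a generators argument, with cocommutativity of $kG$ as the essential structural input.

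For existence, I would first observe that, since $G$ is a finite group scheme, $\O(G)$ is commutative and hence $kG=\O(G)^{\ast}$ is cocommutative. Consequently the symmetric braiding on $\rep(G)$ is the literal flip, and the $G$-equivariant Hopf structure on $S_r(V)$ makes it a $kG$-module Hopf algebra in the classical sense. Bosonization then furnishes a Hopf algebra structure on $S_r(V)\# kG$ in $Vect$, whose coproduct is the tensor-product formula
\[
\Delta(a\# h)=(a_{(1)}\# h_{(1)})\otimes(a_{(2)}\# h_{(2)}),
\]
where $a_{(1)}\otimes a_{(2)}$ is the coproduct of $a$ in $S_r(V)$ (with each $v\in V$ primitive) and $h_{(1)}\otimes h_{(2)}$ is the coproduct in $kG$; the counit and antipode are given by the analogous tensor-product formulas.

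The two substantive checks for existence are that this $\Delta$ descends to the quotient $S_r(V)=\Sym(V)/(v^{p^r}:v\in V)$ and that it is multiplicative with respect to the smash product relations. For the first, in characteristic $p$ the Frobenius freshman's dream gives $\Delta(v)^{p^r}=(v\otimes 1+1\otimes v)^{p^r}=v^{p^r}\otimes 1+1\otimes v^{p^r}=0$, since $v\otimes 1$ and $1\otimes v$ commute in the tensor-square algebra. For the second, applying $\Delta$ to both sides of the smash product relation $(1\# h)(v\# 1)=(h_{(1)}\cdot v)\# h_{(2)}$ and expanding, the two sides match precisely because $h_{(1)}\otimes h_{(2)}=h_{(2)}\otimes h_{(1)}$ in $kG\otimes kG$. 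Cocommutativity of the resulting Hopf algebra then follows from cocommutativity of both tensor factors.

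For uniqueness, I would note that $S_r(V)\# kG$ is generated as an algebra by $V\# 1$ and $1\# kG$, and that conditions (a) and (b) completely specify any coproduct on these generators via $\Delta(v\# 1)=v\# 1\otimes 1\# 1+1\# 1\otimes v\# 1$ and $\Delta(1\# h)=1\# h_{(1)}\otimes 1\# h_{(2)}$. Any second coproduct $\Delta'$ satisfying (a) and (b) is, like $\Delta$, an algebra map agreeing with $\Delta$ on a generating set, hence $\Delta'=\Delta$; the counit and antipode are then forced by the standard Hopf axioms. The only non-formal step in the whole argument is the multiplicativity check in the smash product direction, where cocommutativity of $kG$ enters essentially; everything else is bookkeeping.
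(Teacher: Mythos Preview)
Your proposal is correct and follows essentially the same route as the paper: existence via the standard bosonization/Radford biproduct, cocommutativity from cocommutativity of the two tensor factors, and uniqueness because the conditions pin down the coproduct on algebra generators. The paper's own proof is much terser---it simply invokes the bosonization discussion preceding the lemma and observes that the multiplication map $S_r(V)\otimes kG\to S_r(V)\# kG$ is a coalgebra isomorphism---so your version just makes explicit the checks (descent to the quotient via the freshman's dream, compatibility with the smash relations, the generators argument for uniqueness) that the paper leaves implicit.
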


\begin{proof}
The existence of such a Hopf structure follows by the discussion above.  Cocommutativity follows from the fact that the two Hopf subalgebras $S_r(V)$ and $kG$ are cocommutative, and that the multiplication map
\[
mult:S_r(V)\ot kG\to S_r(V)\# kG
\]
is a morphism, and hence an isomorphism, of coalgebras.
\end{proof}

The fact that $S_r(V)\# kG$ is cocommutative tells us that it serves as the group ring for an associated finite group scheme.

\begin{definition}\label{def:A}
For any finite group scheme $G$, and any finite-dimensional $G$-representation $V$, we define the finite group scheme $\Sigma_V(G,r)$ to be the unique such group scheme with associated group algebra
\[
k\Sigma_V(G,r)=S_r(V)\# kG.
\]
\end{definition}

Said another way, $\Sigma_V(G,r)$ is the spectrum of the dual Hopf algebra
\[
\Sigma_V(G,r)=\Spec\left((S_r(V)\# kG)^\ast\right).
\]
Note that the group scheme $\Sigma_V(G,r)$ admits a normal subgroup $N_V(r)\subset \Sigma_V(G,r)$ which coresponds to the normal Hopf subalgebra $S_r(V)\subset k\Sigma_V(G,r)$, and that we have an exact sequence of group schemes
\begin{equation}\label{eq:458}
1\to N_V(r)\to \Sigma_V(G,r)\to G\to 1.
\end{equation}

\begin{lemma}
Suppose that $G$ is infinitesimal, and let $V$ be an any finite-dimensional $G$-representation.  Then $\Sigma_V(G,r)$ is infinitesimal.  Furthermore, if $G$ is unipotent then $\Sigma_V(G,r)$ is unipotent as well.
\end{lemma}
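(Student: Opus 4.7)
The plan is to unpack the Hopf-algebraic structure of $k\Sigma_V(G,r)=S_r(V)\# kG$ and reduce the two locality questions to facts about each tensor factor. The first step is structural: since $S_r(V)$ is viewed as a Hopf algebra in $\rep(G)$ via the $G$-module action, with trivial $G$-coaction, the bosonization carries the tensor product coproduct $\Delta(a\# g)=(a_{(1)}\# g_{(1)})\ot(a_{(2)}\# g_{(2)})$. Thus $k\Sigma_V(G,r)\cong S_r(V)\ot kG$ as coalgebras and, dualizing,
\[
\O(\Sigma_V(G,r))\cong S_r(V)^\ast\ot\O(G)\quad\text{as algebras.}
\]

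For the infinitesimal claim, I would verify that each tensor factor of $\O(\Sigma_V(G,r))$ is a finite-dimensional local $k$-algebra with residue field $k$, whence the tensor product is again such (the sum of the two augmentation ideals is nilpotent and cuts out the unique maximal ideal). The factor $\O(G)$ is local by the hypothesis that $G$ is infinitesimal, with residue field $k$ supplied by the counit. For $S_r(V)^\ast$, I would observe that $S_r(V)$ is a quotient of the connected Hopf algebra $\Sym(V)$ (the universal enveloping algebra of the abelian Lie algebra $V$, in which each $v\in V$ is primitive), so its coradical filtration descends to give coradical $k$; equivalently, the dual algebra $S_r(V)^\ast$ is local with residue field $k$.

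For the unipotent claim I would work directly with the algebra $k\Sigma_V(G,r)$. The augmentation ideal $S_r(V)^+=(v:v\in V)$ is nilpotent, and it is $G$-stable because $G$ acts by augmented algebra automorphisms; hence $I:=S_r(V)^+\# kG$ is a two-sided ideal of the smash product with $I^n=(S_r(V)^+)^n\# kG=0$ for $n$ sufficiently large. The quotient $k\Sigma_V(G,r)/I$ is canonically identified with $kG$, which is local when $G$ is unipotent. Since a finite-dimensional algebra admitting a nilpotent two-sided ideal with local quotient is itself local, $k\Sigma_V(G,r)$ is local and $\Sigma_V(G,r)$ is unipotent. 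There is no genuine obstacle here: once the coalgebra and algebra structures of the smash product are isolated, both conclusions reduce to the elementary observation that $V$ generates $S_r(V)$ by primitive nilpotent elements.
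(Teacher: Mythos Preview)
Your proof is correct and follows essentially the same route as the paper: both arguments use the coalgebra identification $k\Sigma_V(G,r)\cong S_r(V)\ot kG$ to reduce infinitesimality to locality of the two tensor factors $S_r(V)^\ast$ and $\O(G)$, and both handle unipotency by observing that the $G$-stable nilpotent ideal $S_r(V)^+\# kG$ has quotient $kG$. The only cosmetic difference is that the paper asserts connectedness of $S_r(V)$ directly (identifying its primitive space), whereas you deduce it from $S_r(V)$ being a Hopf quotient of the connected Hopf algebra $\Sym(V)$.
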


\begin{proof}
Take $\Sigma=\Sigma_V(G,r)$.  As a coalgebra $k\Sigma=S_r(V)\ot kG$.  So the algebra of functions $\O(\Sigma)$ is the tensor product $S_r(V)^\ast\ot \O(G)$.  Since $S_r(V)$ is a connected coalgebra, with primitive space $\operatorname{Prim}(S_r(V))=\{v^{p^s}:0\leq s<r\}$, it follows that the dual $S_r(V)^\ast$ is local.  Since $G$ is infinitesimal the algebra $\O(G)$ is also local.  Now, since a tensor product of finite-dimensional local $k$-augmented algebras is also local, we see that $\O(\Sigma)$ is local.  Hence $\Sigma$ is infinitesimal.
\par

For arbitrary $G$, the maximal ideal $m=(V)\subset S_r(V)$ is stable under the action of $G$, so that the ideal $m\ot kG\subset k\Sigma$ is nilpotent.  Hence the Jacobson radical of $k\Sigma$ is the preimage of the Jacobson radical in $kG$ along the surjection $k\Sigma\to kG$.  It follows that if $kG$ is local then $k\Sigma$ is local.  So we see that $\Sigma$ is unipotent when $G$ is unipotent.
\end{proof}

We note, finally, that the group scheme $\Sigma_V(G,r)$ can be defined entirely within the category of group schemes (rather than in the category of Hopf algebras).  Indeed, the action of $G$ on $V$ induces an action on the $r$-th Frobenius kernel in corresponding additive group scheme $V_a=(V,+)$, and hence on the Cartier dual $(N_V(r)=)V^\vee_{a(r)}$.  We then recover $\Sigma_V(G,r)$ as the semidirect product $V^\vee_{a(r)}\rtimes G$.  This construction is more in line with the standard perspective of, say, Jantzen's text \cite{jantzen03}.  However, what is of interest to us is the algebra structure on $k\Sigma_V(G,r)$.  So the above Hopf algebraic presentation is sufficiently informative for our purposes.

\subsection{Quasi-logarithms and a system of linear equivalences}
\label{sect:soe}

We consider the above construction $\Sigma_V(G,r)$ for the coadjoint representation of $G$.

\begin{definition}\label{def:Sigma}
For any finite group scheme $G$ we define
\[
\Sigma(G,r):=\Sigma_{\mfk{g}^\ast}(G,r),
\]
where $\mfk{g}^\ast$ is the coadjoint representation.  Additionally, for any embedded $1$-parameter subgroup $\psi:\mbb{G}_{a(s),K}\to G_K$ we restrict the coadjoint representation of $G_K$ along $\psi$ to define
\[
\Sigma(G,r)_\psi:=\Sigma_{\mfk{g}^\ast_K}(\mbb{G}_{a(s),K},r).
\]
\end{definition}

When no confusion will arise we will be even more casual in our presentation, and write simply
\[
\Sigma=\Sigma(G,r),\ \ \Sigma_\psi=\Sigma(G,r)_\psi.
\]
(We usually consider a Frobenius kernel $G=\mbb{G}_{(r)}$ and the associated group scheme $\Sigma(G,r)$, so that the parameter $r$ is already clear from the context.)  Note that for any embedded $1$-parameter subgroup $\psi:\mbb{G}_{a(r),K}\to G_K$ the product map $id_{S_r}\ot K\psi$ provides a natural inclusion of group schemes $\Sigma(G,r)_\psi\to \Sigma(G,r)_K$.

\begin{lemma}\label{lem:536}
Let $\mbb{G}$ be a smooth algebraic group which admits a quasi-logarithm.  Consider $G=\mbb{G}_{(r)}$, $\mfk{D}=D(G)$, and $\Sigma=\Sigma(G,r)$ at arbitrary $r>0$.
\par

Any choice of quasi-logarithm $l$ for $G$ specifies an isomorphism of augmented $k$-algebras $a(l):k\Sigma\to\mfk{D}$.  Furthermore, for any $1$-parameter subgroup $\psi:\mbb{G}_{a(s),K}\to G_K$, we have a corresponding isomorphism of augmented $K$-algebra $a(l)_\psi:K\Sigma_\psi\to \mfk{D}_\psi$.  These isomorphisms fit into a diagram of algebra maps
\begin{equation}\label{eq:soi}
\xymatrix{
K\Sigma_K\ar[rr]^{a(L)_K} & & \D_K\\
K\Sigma_\psi\ar[u]^{\rm incl}\ar[rr]^{a(L)_\psi} & & \D_\psi.\ar[u]_{\rm incl}
}
\end{equation}
\end{lemma}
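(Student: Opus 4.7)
The plan is to reduce the construction of the isomorphism $a(l)$ to the commutative situation, by separating the algebra structure on $\O(G)$ from its Hopf structure. Recall that both sides are smash products with $kG$, so the construction hinges on producing a $G$-equivariant algebra isomorphism $\O(G)\cong S_r(\mfk{g}^\ast)$ from the quasi-logarithm, and then passing this to smash products.

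First I would dualize $l:G\to\mfk{g}$ to obtain an algebra map $l^\ast:\Sym(\mfk{g}^\ast)=\O(\mfk{g})\to\O(G)$. Because $G=\mbb{G}_{(r)}$ is the $r$-th Frobenius kernel of a smooth group, every element of the augmentation ideal of $\O(G)$ has vanishing $p^r$-th power; in particular $l^\ast(v)^{p^r}=l^\ast(v^{p^r})=0$ for all $v\in\mfk{g}^\ast\subset m_{\O(\mfk{g})}$. Hence $l^\ast$ descends to an algebra map $\bar l^\ast:S_r(\mfk{g}^\ast)\to\O(G)$. Condition (c) of the quasi-logarithm says that $\bar l^\ast$ induces the identity $\mfk{g}^\ast\overset{\sim}\to m/m^2$; since $S_r(\mfk{g}^\ast)$ is generated as an algebra by $\mfk{g}^\ast$, the image of $\bar l^\ast$ together with the unit generates $\O(G)$ modulo the maximal ideal squared, and then Nakayama (applied in the local augmented setting) shows that $\bar l^\ast$ is surjective. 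Smoothness of $\mbb{G}$ of dimension $n=\dim\mfk{g}$ gives $\dim_k\O(G)=p^{rn}=\dim_k S_r(\mfk{g}^\ast)$, so $\bar l^\ast$ is in fact an isomorphism of augmented $k$-algebras.

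Next I would check that $\bar l^\ast$ is $G$-equivariant, where $G$ acts on $\O(G)$ via the adjoint action on itself and on $S_r(\mfk{g}^\ast)$ via the coadjoint representation. This is immediate from condition (a), which says that $l$ is ad-equivariant, dualized to the level of functions. Because $\D=\O(G)\#_{\mrm{ad}}kG$ and $k\Sigma=S_r(\mfk{g}^\ast)\#_{\mrm{ad}^\ast}kG$ are smash products built from these $G$-module algebra structures (plus the coalgebra structure on $kG$, which is the same on both sides), the equivariant isomorphism $\bar l^\ast$ extends uniquely to an algebra isomorphism
\[
a(l):=\bar l^\ast\#\,\mrm{id}_{kG}:k\Sigma\overset{\cong}\longrightarrow\D,
\]
which visibly respects the augmentations.

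For the local statement at an embedded $1$-parameter subgroup $\psi:\mbb{G}_{a(s),K}\to G_K$, I would base change the above construction to $K$ to obtain the $K$-algebra isomorphism $\bar l^\ast_K:S_r(\mfk{g}_K^\ast)\overset{\cong}\to\O(G_K)$, then observe that equivariance for $G_K$ restricts to equivariance for $\mbb{G}_{a(s),K}$ along $\psi$. Forming the smash product with $K\mbb{G}_{a(s),K}$ produces $a(l)_\psi:K\Sigma_\psi\overset{\cong}\to\D_\psi$. The commutativity of diagram \eqref{eq:soi} then follows from functoriality: both inclusions are obtained by smashing $\bar l^\ast_K$ (respectively its identification with itself) with the Hopf embedding $K\mbb{G}_{a(s),K}\hookrightarrow KG_K$, and these two ways of assembling the data yield the same map. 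The only substantive step is the isomorphism claim for $\bar l^\ast$; the rest is formal bookkeeping with smash products, so I expect the dimension/Nakayama argument in step one to be the main (and essentially only) point requiring care.
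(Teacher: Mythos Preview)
Your proposal is correct and follows essentially the same approach as the paper's own proof: both construct a $G$-equivariant algebra map $S_r(\mfk{g}^\ast)\to\O(G)$ from the quasi-logarithm, verify it is an isomorphism via the cotangent identification plus a local-surjectivity/dimension-count argument, and then extend to the smash products by $\#\,id_{kG}$ (resp.\ $\#\,id_{K\mbb{G}_{a(s),K}}$). Your treatment is slightly more explicit about why the map on $\Sym(\mfk{g}^\ast)$ descends to $S_r(\mfk{g}^\ast)$, but otherwise the two arguments coincide.
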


The augmentations considered above are, of course, the augmentations specified by the respective counits.

\begin{proof}
Take $S=S_r(\mfk{g}^\ast)$, with its $G$-action induced by the coadjoint action on $\mfk{g}^\ast$.  Any quasi-logarithm $l$ specifies a $G$-equivariant map of algebras $a_0:S\to \O(G)$ which is an isomorphism on cotangent spaces $m_0/m_0^2\to m_1/m_1^2$.  Indeed, a quasi-logarithm for $G$ is a choice of equivariant section $\mfk{g}^\ast\to m_1$ of the reduction map $m_1\to m_1/m_1^2=\mfk{g}^\ast$, and $a_0$ is the algebra map from the (truncated) symmetric algebra induced by this section.  Since $\O(G)$ is local, such a map is necessarily surjective.  Since furthermore $\dim(S)=\dim(\O(G))=r^{\dim(\mfk{g})}$, it follows that $a_0$ is an isomorphism.  Since both algebras in question are local, $a_0$ is an isomorphism of augmented algebras.  (This point is also obvious from the construction of $a_0$.)
\par

We obtain the desired isomorphism $a(l):k\Sigma\to \D$ as the product $a(l)=a_0\ot id_{kG}$, and similarly $a(l)_\psi:K\Sigma_\psi\to \D_\psi$ is the product $(a_0)_K\ot id_{k\mbb{G}_{a(s)}}$.  One sees directly that, since $a_0$ is an isomorphism of augmented algebras, $a(l)$ and $a(l)_\psi$ are also isomorphisms of augmented algebras.
\end{proof}

As a consequence of the above lemma, we see that any choice of quasi-logarithm for the ambient group $\mbb{G}$ specifies a ``system of linear equivalences" for $\D$, and its local family of Hopf subalgebras $\D_\psi$.

\begin{proposition}\label{prop:809}
For $G$ as in Lemma \ref{lem:536}, there is an equivalence of $k$-linear, abelian categories $\mcl{L}:\rep(\mfk{D})\overset{\sim}\to \rep(\Sigma)$ which preserves the trivial representation $\mcl{L}(k)=k$.  Furthermore, for any $1$-parameter subgroup $\psi:\mbb{G}_{a(s),K}\to G_K$ we have a corresponding equivalence of $K$-linear categories $\mcl{L}_\psi:\rep(\D_\psi)\overset{\sim}\to \rep(\Sigma_\psi)$ which preserves the trivial representation, and fits into a diagram of exact linear functors
\begin{equation}\label{eq:soe}
\xymatrix{
\rep(\D_K)\ar[rr]^{\mcl{L}_K}\ar[d]_{\res} & & \rep(\Sigma_K)\ar[d]^{\res}\\
\rep(\D_\psi)\ar[rr]^{\mcl{L}_\psi} & & \rep(\Sigma_\psi).
}
\end{equation}
\end{proposition}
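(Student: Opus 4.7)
The plan is to deduce Proposition \ref{prop:809} directly from Lemma \ref{lem:536}, by taking $\mathcal{L}$ and $\mathcal{L}_\psi$ to be the restriction-of-scalars functors along the augmented algebra isomorphisms $a(l):k\Sigma\to\mfk{D}$ and $a(l)_\psi:K\Sigma_\psi\to\mfk{D}_\psi$. Concretely, given a $\mfk{D}$-module $V$, I regard the underlying vector space as a $k\Sigma$-module via $a(l)$, and set $\mathcal{L}(V)$ equal to this $k\Sigma$-module; since $a(l)$ is an isomorphism of $k$-algebras, restriction of scalars along $a(l)$ is a $k$-linear equivalence of abelian categories with quasi-inverse restriction along $a(l)^{-1}$. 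Exactness on both sides is automatic, as restriction of scalars never disturbs underlying vector spaces.

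Next, I would observe that $\mathcal{L}$ preserves the trivial module. The trivial $\mfk{D}$-representation is precisely the one-dimensional representation on which $\mfk{D}$ acts through its counit $\epsilon_\mfk{D}$, and likewise the trivial $\Sigma$-representation is the one on which $k\Sigma$ acts through $\epsilon_{k\Sigma}$. Since $a(l)$ is an isomorphism of augmented algebras, $\epsilon_\mfk{D}\circ a(l)=\epsilon_{k\Sigma}$, and thus $\mathcal{L}(k)=k$. The same argument, applied to $a(l)_\psi$, produces the $K$-linear equivalence $\mathcal{L}_\psi:\rep(\D_\psi)\overset{\sim}\to\rep(\Sigma_\psi)$ satisfying $\mathcal{L}_\psi(K)=K$.

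For the compatibility diagram \eqref{eq:soe}, I would appeal to the compatibility diagram \eqref{eq:soi}. For any object $V$ in $\rep(\D_K)$, the two compositions in \eqref{eq:soe} both produce the $K\Sigma_\psi$-module obtained by pulling $V$ back along the composite algebra map $K\Sigma_\psi\to \mfk{D}_K$; one composite factors through the inclusion $K\Sigma_\psi\hookrightarrow K\Sigma_K$ followed by $a(l)_K$, the other factors through $a(l)_\psi$ followed by the inclusion $\D_\psi\hookrightarrow \D_K$, and these two factorizations agree by \eqref{eq:soi}. This equality of functors is exact on both sides. (To identify $\mathcal{L}_K$ with the base change of $\mathcal{L}$, one uses that $a(l)_K$ is by construction the base change of $a(l)$ and that restriction of scalars commutes with base change.)

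I do not anticipate any genuine obstacle: the content of the proposition is already encoded in Lemma \ref{lem:536}, and the only work is to translate augmented algebra isomorphisms into equivalences of module categories preserving the trivial object, and to check that the square of pullback functors commutes because the corresponding square of algebra maps commutes. The proposition is being phrased in a strictly linear abelian setting, so no tensor structure needs to be addressed at this stage, which is what makes the translation straightforward.
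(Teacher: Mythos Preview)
Your proposal is correct and follows essentially the same approach as the paper: the paper's proof simply defines $\mcl{L}$ and $\mcl{L}_\psi$ as restriction along the algebra isomorphisms $a(l)$ and $a(l)_\psi$ of Lemma~\ref{lem:536}. You have supplied the routine details (preservation of the trivial module via the augmentation compatibility, and commutativity of \eqref{eq:soe} via the commutative square \eqref{eq:soi}) that the paper leaves implicit.
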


\begin{proof}
Define $\mcl{L}$ and $\mcl{L}_\psi$ as restriction along the algebra isomorphisms $a(l)$ and $a(l)_\psi$ of Lemma \ref{lem:536}, respectively.
\end{proof}

For any $1$-parameter subgroup $\psi:\mbb{G}_{a(s),K}\to G_K$ we let
\[
f'_\psi:|\Sigma_\psi|\to |\Sigma|
\]
denote the corresponding map on projective spectra of cohomology.  Specifically, we consider the composite
\[
f'_\psi:=\left(\ |\Sigma_\psi|\overset{\res^\ast}\longrightarrow |\Sigma_K|=|\Sigma|_K\to |\Sigma|\ \right).
\]
Proposition \ref{prop:809} tells us that, at any $1$-parameter subgroup $\psi:\mbb{G}_{a(s),K}\to G$, the maps $f'_\psi$ fit into a diagram
\begin{equation}\label{eq:soe_sch}
\xymatrix{
|\D_\psi|\ar[rrr]^{f_\psi} & & & |\D|\\
|\Sigma_\psi|\ar[u]_{\mcl{L}_\psi^\ast}^\cong\ar[rrr]^{f'_\psi} & & & |\Sigma|\ar[u]_{\mcl{L}^\ast}^\cong
}
\end{equation}
of maps of $k$-schemes, where $f_\psi$ is as in \eqref{eq:f_psi}.
\par

Now, from \cite[Corollary 5.4.1]{suslinfriedlanderbendel97} we understand that any closed embedding $\Sigma_0\to \Sigma_1$ of group schemes induces a map on projective spectra of cohomology $|\Sigma_0|\to |\Sigma_1|$ which is \emph{universally injective}.  The universal modifier here simply indicates that each base change $|\Sigma_0|_K\to |\Sigma|_K$ is also injective.  So the above diagram \eqref{eq:soe_sch} implies the following basic result.

\begin{proposition}\label{prop:incl}
Consider a smooth algebraic group $\mbb{G}$, and take $G=\mbb{G}_{(r)}$.  Suppose that $\mbb{G}$ admits a quasi-logarithm.  Let $\psi:\mbb{G}_{a(s)}\to G$ be an embedded $1$-parameter subgroup which is defined over $k$.  Then the induced map on projective spectra of cohomology
\[
f_\psi:|\D_\psi|\to |\D|
\]
is universally injective.
\end{proposition}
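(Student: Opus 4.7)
The plan is to reduce the statement about $f_\psi$ to an analogous statement about the map $f'_\psi:|\Sigma_\psi|\to|\Sigma|$ via the commutative diagram \eqref{eq:soe_sch}, and then invoke the Suslin--Friedlander--Bendel universal injectivity result \cite[Corollary 5.4.1]{suslinfriedlanderbendel97} for the classical infinitesimal group schemes $\Sigma_\psi$ and $\Sigma$. Since $\psi$ is assumed to be defined over $k$, the base change step collapses and we are left with a straightforward comparison.

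First, I would fix a choice of quasi-logarithm $l$ for $\mbb{G}$ and thus for $G=\mbb{G}_{(r)}$, and invoke Lemma \ref{lem:536} and Proposition \ref{prop:809} to produce the commutative diagram \eqref{eq:soe_sch}. Because $\psi$ is defined over $k$, we have $K=k$, so $|\Sigma_K|=|\Sigma|$ and $|\D_K|=|\D|$; consequently $f_\psi = \res^\ast_\psi:|\D_\psi|\to|\D|$ and $f'_\psi=\res^\ast_\psi:|\Sigma_\psi|\to|\Sigma|$, where the latter is induced by the closed embedding of group schemes $\Sigma_\psi = \Sigma_{\mfk{g}^\ast}(\mbb{G}_{a(s)},r) \hookrightarrow \Sigma(G,r)=\Sigma$ given by the product $id_{S_r}\otimes k\psi$.

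Next, I would apply \cite[Corollary 5.4.1]{suslinfriedlanderbendel97}, which asserts that for any closed embedding of (infinitesimal) group schemes $\Sigma_0\hookrightarrow \Sigma_1$ the induced map $|\Sigma_0|\to|\Sigma_1|$ on projective spectra of cohomology is universally injective. Applied to $\Sigma_\psi\hookrightarrow\Sigma$, this yields that $f'_\psi$ is universally injective. Finally, since the vertical arrows in \eqref{eq:soe_sch} are isomorphisms of schemes (being induced by restriction along the augmented algebra isomorphisms $a(l)$ and $a(l)_\psi$ of Lemma \ref{lem:536}), universal injectivity transfers along the diagram to yield universal injectivity of $f_\psi$.

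There is not really a hard step here: the substantive work has already been carried out in setting up the system of linear equivalences in Proposition \ref{prop:809} and invoking the Suslin--Friedlander--Bendel framework. The only point worth double-checking is that the embedding $\Sigma_\psi\hookrightarrow\Sigma$ used in Definition \ref{def:Sigma} and referenced just before Lemma \ref{lem:536} is genuinely a closed embedding of finite infinitesimal group schemes, so that \cite[Corollary 5.4.1]{suslinfriedlanderbendel97} applies directly; this is immediate from the description $k\Sigma_\psi = S_r(\mfk{g}^\ast)\#k\mbb{G}_{a(s)}\hookrightarrow S_r(\mfk{g}^\ast)\#kG = k\Sigma$ as Hopf subalgebras, dualized.
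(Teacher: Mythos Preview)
Your proposal is correct and follows essentially the same approach as the paper: reduce to the map $f'_\psi:|\Sigma_\psi|\to|\Sigma|$ via the commutative diagram \eqref{eq:soe_sch}, apply \cite[Corollary 5.4.1]{suslinfriedlanderbendel97} to the closed embedding $\Sigma_\psi\hookrightarrow\Sigma$, and transfer universal injectivity back along the vertical isomorphisms. The paper states this argument in the paragraph immediately preceding the proposition rather than in a separate proof environment, but the content is the same.
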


The system of equivalences \eqref{eq:soe}, which we view as a family of equivalences parametrized by the space of $1$-parameter subgroups in $G$, can be leveraged in quite substantive ways in an analysis of support for the double $\D$.  Indeed, the following two sections essentially argue this point in both the finite-dimensional and infinite-dimensional context.

\section{Support and tensor products for finite-dimensional representations}
\label{sect:tensor_fd}

As in the previous section, we consider a Frobenius kernel $G$ in a smooth algebraic group $\mbb{G}$ which admits a quasi-logarithm.  We prove that cohomological support for the Drinfeld double $\D=D(G)$ satisfies the tensor product property
\begin{equation}\label{eq:881}
|\D|_{(V\ot W)}=|\D|_V\cap |\D|_W.
\end{equation}
Here $V$ and $W$ are specifically \emph{finite-dimensional} representations over $\D$.  This result appears in Theorem \ref{thm:tensor} below.  Our proof of Theorem \ref{thm:tensor} relies on an analysis of cohomological support, and the tensor product property, for representations over the local family $\D_\psi$.
\par

For any given $\D_\psi$ we argue that the behaviors of cohomological support are, essentially, independent of the choice of coproduct.  We elaborate on this point in Subsections \ref{sect:loc1} and \ref{sect:lil_classify} below.
\par

In Section \ref{sect:bigtensor}, we provide an extension of cohomological support, and of the identity \eqref{eq:881}, to the big representation category $\Rep(\D)$.  Such an extension allows us to apply methods of Rickard \cite{rickard97} to show that cohomological support can also be used to classify thick tensor ideals in the stable representation category for $\D$.

\subsection{Comparison with the $\pi$-point support of Appendix \ref{sect:pi}}
\label{sect:compare}

Before we begin, let us make a few points of comparison between the material of this section and the material of Appendix \ref{sect:pi}, for the $\pi$-point orientated reader.  In the appendix we produce a $\pi$-point support theory for the double $\D$, essentially by restricting to the local subalgebras $\D_\psi$ and considering such a theory for $\D_\psi$.
\par

We note that the proof of the tensor product property for cohomological support is, arguably, more difficult than the proof for $\pi$-point support (Theorem \ref{thm:tpp_Pi} below).  However, the proof that $\pi$-support \emph{agrees} with cohomological support uses precisely the same technology which is used in the proof of the tensor product property for cohomological support.  So, depending on one's inclinations, one may view Theorem \ref{thm:tensor} below essentially \emph{as} the claim that $\pi$-point support and cohomological support agree for Drinfeld doubles of the prescribed form.

\subsection{Supports and thick ideals for local Hopf algebras}
\label{sect:loc1}

Let $A$ be a finite-dimensional, \emph{local}, Hopf algebra.  Suppose additionally that $A$ has finite type cohomology.
\par

For $A$ as prescribed, the support \eqref{eq:support} of a given finite-dimensional representation $V$ can be computed as the support of the sheaf associated to the $\Ext^\ast_A(k,k)$-module $\Ext^\ast_A(k,V)$, where we act via the first coordinate
\begin{equation}\label{eq:606}
|A|_V=\operatorname{Supp}_{|A|}\Ext^\ast_A(k,V)^\sim.
\end{equation}
See for example \cite[Proposition 5.7.1]{benson98} or \cite[Proposition 2]{pevtsovawitherspoon09}.  That is to say, the support spaces $|A|_V$ do not depend on the choice of Hopf structure on $A$.
\par

Let us write $D^b(A)$ for the bounded derived category of finite-dimensional $A$-representations.  Recall that a thick subcategory in $D^b(A)$ is a full triangulated subcategory which is closed under taking summands, and a thick ideal in $D^b(A)$ is a thick subcategory which is additionally closed under the (left and right) actions of $D^b(A)$ on itself.  The following lemma is strongly related to the above identification \eqref{eq:606}.

\begin{lemma}\label{lem:614}
Consider a finite-dimensional local Hopf algebra $A$ which has finite type cohomology.  Any thick subcategory in $D^b(A)$ is stable under the tensor action of $D^b(A)$ on itself.  That is to say, the collection of thick ideals in $D^b(A)$ is identified with the collection of thick subcategories in $D^b(A)$.
\end{lemma}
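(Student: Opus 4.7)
The plan is to show that any thick subcategory $\mathscr{K}\subseteq D^b(A)$ is automatically a tensor ideal, by exploiting the fact that locality forces $k$ to generate all of $D^b(A)$ as a thick subcategory.

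First, fix $X\in \mathscr{K}$ and define
\[
\mathscr{C}_X:=\{Y\in D^b(A):X\ot Y\in \mathscr{K}\text{ and }Y\ot X\in \mathscr{K}\}.
\]
Since tensoring with $X$ (on either side) is an exact endofunctor of $D^b(A)$ that preserves direct summands, and $\mathscr{K}$ itself is thick, the subcategory $\mathscr{C}_X$ is thick. Also, because $X\ot k\cong X\cong k\ot X$, the trivial module $k$ lies in $\mathscr{C}_X$. So it will suffice to prove the following:

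\emph{Claim.} The thick subcategory of $D^b(A)$ generated by $k$ is all of $D^b(A)$.

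Granting the claim, we deduce $\mathscr{C}_X=D^b(A)$, which means $\mathscr{K}$ absorbs tensor products with arbitrary objects, proving the lemma. To prove the claim I would argue in two steps. First, since $A$ is local, $k$ is the unique simple $A$-module, so every finite-dimensional $V\in \rep(A)$ admits a finite composition series
\[
0=V_0\subset V_1\subset\cdots\subset V_n=V
\]
with each subquotient $V_i/V_{i-1}\cong k$. The exact triangles $V_{i-1}\to V_i\to k$ in $D^b(A)$ show inductively that each $V_i$, and hence $V$, lies in the thick subcategory generated by $k$. Second, an arbitrary object of $D^b(A)$ has only finitely many nonzero cohomology groups, each finite-dimensional, and the standard (brutal or canonical) truncation triangles express it as an iterated extension of shifted objects of $\rep(A)$. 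Combining these two reductions gives the claim.

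I do not expect any serious obstacle here: the entire argument is a standard "dévissage" argument once one observes that locality kills the usual complication in which non-isomorphic simples could give rise to distinct non-ideal thick subcategories (as happens, for instance, in $D^b(G)$ for a non-unipotent finite group scheme). The finite type cohomology hypothesis is not actually used in the proof; it is carried along as part of the running context, in preparation for the subsequent identification of thick subcategories with subsets of $|A|$.
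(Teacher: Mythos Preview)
Your proof is correct and follows essentially the same approach as the paper: both reduce to the observation that locality forces $k$ to generate $D^b(A)$ as a thick subcategory, so that any thick subcategory is automatically stable under tensoring. Your packaging via the auxiliary category $\mathscr{C}_X$ is a minor cosmetic difference, and your remark that the finite type cohomology hypothesis is unused is also accurate.
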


\begin{proof}
Locality tells us that any complex $V$ in $D^b(A)$ is obtainable from the trivial representation via a finite sequence of extensions.  It follows that for any object $W$ in $D^b(A)$, the product $V\ot W$ is obtainable from $W=k\ot W$ via a finite sequence of extensions.  Hence $V\ot W$ is contained in the thick subcategory generated by $W$, for arbitrary $V$ and $W$ in $D^b(A)$.  Similarly, $W\ot V$ is contained in the thick ideal generated by $W$.
\par

Now, let $\msc{K}\subset D^b(A)$ be any thick subcategory.  By the above discussion we have $V\ot \msc{K}\subset \msc{K}$ and $\msc{K}\ot V\subset \msc{K}$ for all $V$ in $D^b(A)$.  This shows that $\msc{K}$ is a thick ideal.  Hence the inclusion
\[
\{\text{thick ideals in }D^b(A)\}\to \{\text{thick subcategories in }D^b(A)\}
\]
is an equality.
\end{proof}

We note that the definition of support \eqref{eq:support} works perfectly well for arbitrary objects in the bounded derived category.  Furthermore, when $A$ is local the expression \eqref{eq:606} remains valid for any $V$ in $D^b(A)$.  
\par

For an exact triangle $V\to W\to V'$ in $D^b(A)$, the long exact sequence in cohomology provides an exact sequence of $\Ext^\ast_A(k,k)$-modules
\[
\Ext^\ast_A(k,V)\to \Ext^\ast_A(k,W)\to \Ext^\ast_A(k,V').
\]
So there is an inclusion of supports $|A|_W\subset \left(|A|_V\cup|A|_{V'}\right)$ whenever we have such a triangle.  Additionally, for any sum $V=V_1\oplus V_2$ in $D^b(A)$ we have an equality $|A|_V=|A|_{V_1}\cup |A|_{V_2}$.  From these observations we deduce an inclusion
\[
|A|_W\subset |A|_V\ \ \text{whenever $W$ is in the thick subcat generated by }V.
\]

\begin{lemma}\label{lem:incl}
Consider a finite-dimensional local Hopf algebra $A$.  For any $V$ and $W$ in $D^b(A)$ there is an inclusion
\[
|A|_{(V\ot W)}\subset \left(|A|_V\cap |A|_W\right).
\]
\end{lemma}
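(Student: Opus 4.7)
The plan is to reduce the inclusion to the observation, already extracted from the preceding discussion, that $|A|_{W'} \subset |A|_V$ whenever $W'$ lies in the thick subcategory of $D^b(A)$ generated by $V$. Once that containment of supports under generation is available, the tensor-product inclusion is nearly formal, given locality of $A$.

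First, I would invoke the argument used in the proof of Lemma \ref{lem:614}: since $A$ is local, the trivial module $k$ is the unique simple, and any object $V$ in $D^b(A)$ is obtained from $k$ by finitely many extensions (and summands). Tensoring such a filtration by a fixed object $W$, and using that $-\ot W$ is exact, shows that $V \ot W$ is built from $k \ot W = W$ by a finite sequence of extensions. Hence $V \ot W$ lies in the thick subcategory $\langle W\rangle \subset D^b(A)$ generated by $W$. By the remark recorded immediately before the lemma, this yields
\[
|A|_{(V\ot W)} \subset |A|_W.
\]

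Next I would run the symmetric argument: $W$ is built from $k$ by extensions, so $V \ot W$ is built from $V \ot k = V$ by extensions, placing $V \ot W$ in $\langle V \rangle$ and giving $|A|_{(V\ot W)} \subset |A|_V$. Intersecting the two inclusions yields the desired containment $|A|_{(V\ot W)} \subset |A|_V \cap |A|_W$.

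I do not expect any serious obstacle here; the only mild subtleties are checking that the ``building from $k$ by extensions'' statement really holds inside the bounded \emph{derived} category (one may cofibrantly replace and then induct on the length of a composition series of each cohomology module, truncating in degree), and that the extension-stability of supports extends to $D^b(A)$ (which it does, via the long exact sequence \eqref{eq:606} already noted). Both points have been essentially handled in the discussion preceding the lemma, so the proof is quite short.
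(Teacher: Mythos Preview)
Your proposal is correct and follows essentially the same approach as the paper: both argue that locality forces $V\ot W$ to lie in the thick subcategory generated by each factor, then apply the support-containment-under-generation observation recorded just before the lemma. The only cosmetic difference is that the paper phrases the first step as ``$V\ot W$ lies in the thick ideal generated by $V$, hence in the thick subcategory generated by $V$ by Lemma~\ref{lem:614},'' whereas you unpack that lemma's argument directly.
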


\begin{proof}
The object $V\ot W$ is in the thick ideal generated by $V$, and hence the thick subcategory generated by $V$ by Lemma \eqref{lem:614}.  So $|A|_{(V\ot W)}\subset |A|_V$ by the above reasoning.  We similarly find $|A|_{(V\ot W)}\subset |A|_W$, which gives the claimed inclusion $|A|_{(V\ot W)}\subset |A|_V\cap |A|_W$.
\end{proof}

We note that the inclusion of Lemma \ref{lem:incl} does \emph{not} hold for an arbitrary Hopf algebra $A$.  One can see for example \cite{bensonwitherspoon14}.

\begin{remark}
The familiar reader is free to replace the derived category $D^b(A)$ with the stable category $\operatorname{stab}(A)$ in the above discussion.
\end{remark}

\subsection{Classification of thick ideals for local algebras}
\label{sect:lil_classify}

\begin{definition}\label{def:lil_classify}
Let $A$ be a finite-dimensional Hopf algebra which has finite type cohomology.  We say that cohomological support for $A$ \emph{classifies thick ideals} in $D^b(A)$ if an inclusion of supports $|A|_W\subset |A|_V$, for nonzero $W$ and $V$ in $D^b(A)$, implies that $W$ is in the thick ideal generated by $V$ in $D^b(A)$.
\end{definition}

The supposition that $W$ and $V$ are nonzero (non-acyclic) is necessary to avoid issues with perfect complexes.  Namely, any perfect complex has vanishing support, and yet the ideal of perfect complexes in $D^b(A)$ is not contained in the ideal of acyclic complexes.  However, for nonzero $V$, we always have that $\operatorname{perf}(A)$ is contained in the thick ideal generated by $V$.
\par

One can consider representation categories of finite group schemes, for example.  In this case we understand \cite{friedlanderpevtsova07} that cohomological support does in fact classify thick ideals in the associated derived category.

\begin{theorem}[{\cite[Theorem 6.3]{friedlanderpevtsova07}}]\label{thm:G_class}
For any finite group scheme $G$, cohomological support classifies thick ideals in $D^b(G)$.
\end{theorem}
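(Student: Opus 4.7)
The plan is to establish the non-trivial implication of Definition \ref{def:lil_classify}: if $W, V \in D^b(G)$ are nonzero with $|G|_W \subseteq |G|_V$, then $W$ belongs to the thick tensor ideal $\langle V\rangle$ generated by $V$. My strategy is to enlarge the picture to the big stable category $\operatorname{StMod}(G)$ of arbitrary $kG$-representations, build a Bousfield/Rickard idempotent attached to the closed subset $|G|_V$, and then descend the conclusion to compact objects.

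Concretely, I would first reduce to $V, W \in \rep(G)$ by decomposing bounded complexes into their cohomology modules via iterated triangles, using that the support of a complex equals the union of the supports of its cohomology groups. By Corollary \ref{cor:174}, realize $|G|_V$ as $|G|_L$ for some product $L$ of Carlson modules. Using $L$, construct in $\operatorname{StMod}(G)$ a Rickard-type idempotent triangle $e \to k \to f$ enjoying two features: first, $e$ lies in the localizing tensor ideal of $\operatorname{StMod}(G)$ generated by $V$; second, for every $X \in \operatorname{StMod}(G)$, one has $X \otimes e \cong X$ if and only if $|G|_X \subseteq |G|_V$. Applying the second feature to $X = W$ gives $W \cong W \otimes e$, so $W$ lies in the localizing tensor ideal generated by $V$, and restricting to the compact subcategory $\stab(G)$ — which recovers $D^b(G)$ up to perfect complexes — yields $W \in \langle V\rangle$.

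The main obstacle is the second feature of the Rickard idempotent, which amounts to a tensor product property for cohomological support of infinite-dimensional modules together with a support-detection statement. For finite group schemes this rests on the full $\pi$-point framework of \cite{friedlanderpevtsova07}: the projectivity detection theorem (Theorem \ref{thm:pevtsova}), realization of arbitrary closed subsets in $|G|$ by products of Carlson modules, and a nilpotence theorem asserting that a morphism in $\stab(G)$ whose restriction along each $\pi$-point is nilpotent must be $\otimes$-nilpotent. Once the classification of localizing tensor ideals in $\operatorname{StMod}(G)$ is in hand, the thick-ideal classification at the level of $D^b(G)$ follows by restricting to compacts, mirroring the Benson-Carlson-Rickard argument for finite groups with $\pi$-points replacing shifted cyclic subgroups.
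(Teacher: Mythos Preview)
The paper does not give a proof of this statement; it is quoted verbatim from \cite[Theorem 6.3]{friedlanderpevtsova07} as an external input, so there is no in-paper argument to compare against. Your outline is essentially the strategy of that reference (which in turn adapts Rickard's argument for finite groups to the $\pi$-point setting), and you have correctly identified the load-bearing ingredients: realization of closed sets by products of Carlson modules, the $\pi$-point projectivity detection theorem, and a tensor-nilpotence theorem.

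One point in your sketch deserves more care. You write that once $W\cong W\otimes e$ places $W$ in the localizing tensor ideal of $\operatorname{StMod}(G)$ generated by $V$, ``restricting to the compact subcategory $\stab(G)$ \ldots\ yields $W\in\langle V\rangle$.'' This descent is not automatic: membership of a compact object in a localizing subcategory does not formally imply membership in the thick subcategory on the same generators. One needs Neeman's theorem that, for a localizing subcategory generated by a set of compact objects, its intersection with the compacts is exactly the thick subcategory generated by those compacts. This is standard and is indeed how \cite{friedlanderpevtsova07} proceeds, but it should be named explicitly rather than absorbed into the word ``restricting.'' Similarly, the assertion that $e$ lies in the localizing tensor ideal generated by $V$ (rather than merely by the Carlson module $L$) is precisely where the nilpotence theorem enters, so that step is the heart of the matter and not just a construction.
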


When $G$ is furthermore \emph{unipotent}, or rather when $\rep(G)$ is a local category, Theorem \ref{thm:G_class} and Lemma \ref{lem:614} combine to give the following.

\begin{corollary}\label{cor:U_class}
Suppose that $G$ is a finite unipotent group scheme.  Then thick \emph{subcategories} in $D^b(G)$ are classified by cohomological support.
\end{corollary}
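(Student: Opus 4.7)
The plan is to deduce Corollary \ref{cor:U_class} directly by combining the two results that immediately precede it: Theorem \ref{thm:G_class}, which says cohomological support classifies thick \emph{ideals} in $D^b(G)$ for any finite group scheme $G$, and Lemma \ref{lem:614}, which says that over a finite-dimensional local Hopf algebra every thick subcategory is automatically a thick ideal. So the argument reduces to checking that the hypotheses of Lemma \ref{lem:614} apply to $A=kG$ when $G$ is unipotent, and then observing that under those hypotheses the collection of thick subcategories coincides with the collection of thick ideals, at which point Theorem \ref{thm:G_class} finishes the job.

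First I would recall that, by definition, $G$ being unipotent means that the group algebra $kG$ is a (finite-dimensional) local Hopf algebra. The finite type cohomology hypothesis of Lemma \ref{lem:614} holds for $kG$ because any finite group scheme has finite type cohomology (this is used implicitly throughout the section when we form $|G|=\Proj\Ext^{\ast}_G(k,k)$ and speak of cohomological support). Thus Lemma \ref{lem:614} applies verbatim with $A=kG$, yielding the equality
\[
\{\text{thick subcategories in }D^b(G)\}=\{\text{thick ideals in }D^b(G)\}.
\]

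Second, I would invoke Theorem \ref{thm:G_class}, which asserts that cohomological support classifies thick ideals in $D^b(G)$ in the sense of Definition \ref{def:lil_classify}: whenever $W$ and $V$ are nonzero objects of $D^b(G)$ with $|G|_W\subset |G|_V$, the object $W$ lies in the thick ideal generated by $V$. Combined with the equality of thick subcategories and thick ideals above, this is exactly the statement that cohomological support classifies thick subcategories of $D^b(G)$, which is the desired corollary.

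There is no real obstacle here; the only thing to double-check is the compatibility of the two notions of ``classification by cohomological support'' (for ideals in Theorem \ref{thm:G_class} versus for subcategories in Corollary \ref{cor:U_class}), and this is automatic once the two classes of subcategories are shown to coincide in the unipotent case. The essential point, conceptually, is that in a local tensor-triangulated setting the tensor ideal generated by an object is already contained in the thick subcategory it generates, because every object is built by iterated extensions from the unit.
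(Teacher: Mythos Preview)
Your proof is correct and follows exactly the approach the paper intends: the corollary is stated immediately after Theorem~\ref{thm:G_class} and Lemma~\ref{lem:614} with the remark that these two results combine to give it, and your argument spells out precisely that combination. There is nothing to add.
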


The following will prove quite useful in our analysis of support for the local Hopf algebras $\D_\psi$.

\begin{proposition}\label{prop:tpp_A}
Let $A$ be a finite-dimensional local algebra.  Suppose that $A$ admits a Hopf algebra structure for which cohomological support classifies thick ideals in the derived category $D^b(A)$.  Then under \emph{any} choice of Hopf structure on $A$, and any choice of objects $V$ and $W$ in $D^b(A)$, we have an equality
\[
|A|_{(V\ot W)}=|A|_V\cap |A|_W.
\]
\end{proposition}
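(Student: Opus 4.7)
The plan is to reduce the claim, for an arbitrary Hopf structure on $A$, to an explicit Carlson-module computation, after first observing that both the classification hypothesis and the support spaces $|A|_V$ are in fact independent of the Hopf structure when $A$ is local. Throughout I write $\otimes$ for tensor over $k$ equipped with the $A$-module structure coming from our chosen (arbitrary) Hopf structure.

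The first step is to argue that the classification hypothesis transfers freely between Hopf structures on $A$. Indeed, by equation \eqref{eq:606} each support $|A|_V$ is computed as the support of the Yoneda module $\Ext^\ast_A(k,V)^\sim$, which only uses the algebra structure of $A$; and by Lemma \ref{lem:614}, for local $A$ the thick ideals in $D^b(A)$ coincide with the thick subcategories, which is again purely a property of the triangulated structure. Both sides of the classification bijection are therefore visibly coproduct-independent, and classification may be assumed under the Hopf structure we are working with.

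Granted this transfer, Lemma \ref{lem:incl} (whose proof only uses locality, via Lemma \ref{lem:614}) gives the inclusion $|A|_{V\otimes W}\subset |A|_V\cap |A|_W$. For the reverse inclusion, I would use Corollary \ref{cor:174} to produce products of Carlson modules $L_V$ and $L_W$ with $|A|_{L_V}=|A|_V$ and $|A|_{L_W}=|A|_W$. Classification then places $L_V\in \langle V\rangle$ and $L_W\in \langle W\rangle$. Applying the exact tensor functor $-\otimes W$ sends $L_V\in \langle V\rangle$ to $L_V\otimes W\in \langle V\otimes W\rangle$; applying $L_V\otimes-$ then sends $L_W\in \langle W\rangle$ to $L_V\otimes L_W\in \langle L_V\otimes W\rangle\subset \langle V\otimes W\rangle$. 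Hence $|A|_{L_V\otimes L_W}\subset |A|_{V\otimes W}$, and iterating Proposition \ref{prop:carlson} across the two Carlson products yields $|A|_{L_V\otimes L_W}=|A|_V\cap |A|_W$. The desired reverse inclusion follows.

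The main point requiring care is the initial transfer step: one must verify that classification under one coproduct genuinely delivers classification under another, i.e.\ that both the supports and the notion of thick ideal are coproduct-independent, and this is where locality of $A$ (through Lemma \ref{lem:614}) is essential. Once that is cleanly in place, the Carlson-module argument is a formal consequence of the classification hypothesis together with the realization and intersection machinery already established in Section \ref{sect:cohom_supp}.
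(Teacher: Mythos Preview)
Your argument is correct and follows essentially the same route as the paper's proof: transfer the classification hypothesis via the coproduct-independence of supports (equation \eqref{eq:606}) and of thick ideals (Lemma \ref{lem:614}), then replace by Carlson modules and apply Proposition \ref{prop:carlson}. The paper leaves the transfer step implicit and only replaces $V$ by a Carlson product $L$, computing $|A|_{L\otimes W}=|A|_L\cap |A|_W$ directly from Proposition \ref{prop:carlson}; your second Carlson replacement $L_W$ for $W$ is harmless but unnecessary.
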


\begin{proof}
Let $\langle X\rangle$ denote the thick subcategory generated by a given object $X$ in $D^b(A)$.  For any object $L$ in $\langle V\rangle$ the product $L\ot W$ is in $\langle V\ot W\rangle$, and hence $|A|_{(L\ot W)}\subset |A|_{(V\ot W)}$.  Consider a product of Carlson modules $L$ for which $|A|_L=|A|_V$.  Then by Proposition \ref{prop:carlson} we have
\[
|A|_{(V\ot W)}\supset |A|_{(L\ot W)}=|A|_L\cap|A|_W=|A|_V\cap |A|_W.
\]
The opposite inclusion is covered by Lemma \ref{lem:incl}, so that we obtain the desired equality.
\end{proof}

\subsection{Implications for $\D_\psi$}
\label{sect:imp}

Fix a smooth algebraic group $\mbb{G}$ which admits a quasi-logarithm and an arbitrary positive integer $r$.  Let $G$ be the $r$-th Frobenius kernel in $\mbb{G}$.  We consider the Drinfeld double $\D=D(G)$.
\par

For such $G$, we have the corresponding infinitesimal group scheme $\Sigma=\Sigma(G,r)$ of Definition \ref{def:Sigma}, and for any $1$-parameter subgroup $\psi:\mbb{G}_{a(s),K}\to G_K$ we have an associated unipotent subgroup $\Sigma_\psi\subset \Sigma_K$.  By  Proposition \ref{prop:809}, any choice of quasi-logarithm for $\mbb{G}$ determines a compatible collection of linear equivalences
\begin{equation}\label{eq:896}
\mcl{L}:\rep(\D)\overset{\sim}\to \rep(\Sigma)\ \ \text{and}\ \ \mcl{L}_\psi:\rep(\D_\psi)\overset{\sim}\to \rep(\Sigma_\psi),
\end{equation}
which preserve the unit objects in the respective categories
\par

Since cohomological support for a local Hopf algebra depends only on the abelian structure on the representation category, we see that the diagram of \eqref{eq:soe_sch} restricts to a diagram
\begin{equation}\label{eq:soe_sch2}
\xymatrix{
|\D_\psi|_{V}\ar[rr]^{f_\psi} & & |\D|\\
|\Sigma_\psi|_{\mcl{L}_\psi V}\ar[u]^\cong_{\mcl{L}^\ast_\psi}\ar[rr]_{f'_\psi} & & |\Sigma|\ar[u]^\cong_{\mcl{L}^\ast},
}
\end{equation}
for any $V$ in $D^b(\D_\psi)$.  Hence the discussions of Subsections \ref{sect:loc1} and \ref{sect:lil_classify} imply the following.

\begin{proposition}\label{prop:tpp_Dp}
Let $G$ be as above, and fix an embedded $1$-parameter subgroup $\psi:\mbb{G}_{a(s),K}\to G_K$.  Then the following hold:
\begin{enumerate}
\item Thick ideals in $D^b(\D_\psi)$ are classified by cohomological support.
\item For any finite-dimensional $\D_\psi$-representations $V$ and $W$ we have
\[
|\D_\psi|_{(V\ot W)}=|\D_\psi|_V\cap |\D_\psi|_W.
\]
\end{enumerate}
\end{proposition}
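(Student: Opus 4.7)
The plan is to transport both statements from the infinitesimal group scheme $\Sigma_\psi$ to $\D_\psi$ via the abelian equivalence $\mcl{L}_\psi : \rep(\D_\psi) \overset{\sim}\to \rep(\Sigma_\psi)$ of Proposition \ref{prop:809}. The key observation is that $\Sigma_\psi = \Sigma_{\mfk{g}^\ast_K}(\mbb{G}_{a(s),K}, r)$ is a \emph{unipotent} infinitesimal group scheme, since it is the $\Sigma_V(H,r)$-construction applied to the unipotent group $H = \mbb{G}_{a(s),K}$ (cf.\ the lemma following Definition \ref{def:A}). Thus $k\Sigma_\psi$ is a local Hopf algebra, and via the augmented-algebra isomorphism $a(l)_\psi : K\Sigma_\psi \to \D_\psi$ of Lemma \ref{lem:536}, the underlying algebra of $\D_\psi$ acquires a second Hopf structure in addition to its native Drinfeld double structure.

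For statement (1), I first extend $\mcl{L}_\psi$ to a triangulated equivalence $D^b(\D_\psi) \simeq D^b(\Sigma_\psi)$, which bijects thick subcategories. By Corollary \ref{cor:U_class}, thick subcategories in $D^b(\Sigma_\psi)$ are classified by cohomological support over $\Sigma_\psi$. Via the commutative square \eqref{eq:soe_sch2} together with the formula \eqref{eq:606} expressing support in terms of $\Ext^\ast_A(k, V)$ for local Hopf algebras, this classification transports to one of thick subcategories in $D^b(\D_\psi)$ by cohomological support over $\D_\psi$. Since $\D_\psi$ is itself local, Lemma \ref{lem:614} identifies thick subcategories with thick ideals in $D^b(\D_\psi)$, giving the desired classification.

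For statement (2), I invoke Proposition \ref{prop:tpp_A} applied to the local algebra underlying $\D_\psi$. By (1) transferred along $a(l)_\psi$, this algebra carries a Hopf structure, namely the one coming from $K\Sigma_\psi$, for which cohomological support classifies thick ideals in the derived category. Proposition \ref{prop:tpp_A} then yields the tensor product property under \emph{any} Hopf structure on this common underlying algebra, and in particular under the Drinfeld double structure on $\D_\psi$, giving $|\D_\psi|_{(V\ot W)} = |\D_\psi|_V \cap |\D_\psi|_W$.

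The only real point to be careful about is that cohomological support is insensitive to the two competing Hopf structures on the common augmented algebra. This is however immediate from \eqref{eq:606}: the graded vector spaces $\Ext^\ast_A(K, V)$ and the algebra $\Ext^\ast_A(K, K)$, together with the Yoneda action, depend only on the augmented algebra structure. So the identification $|\D_\psi|_V = \mcl{L}^\ast_\psi\bigl(|\Sigma_\psi|_{\mcl{L}_\psi V}\bigr)$ is essentially tautological, and no substantive obstacle arises; the proof is just a matter of properly combining the cited results.
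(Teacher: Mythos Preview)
Your proposal is correct and follows essentially the same approach as the paper's proof: transport the thick-ideal classification from $\Sigma_\psi$ via $\mcl{L}_\psi$ using Lemma~\ref{lem:614} (the paper cites Theorem~\ref{thm:G_class} and Lemma~\ref{lem:614} directly rather than the packaged Corollary~\ref{cor:U_class}, but this is the same content), then apply Proposition~\ref{prop:tpp_A} for part~(2). Your added care in justifying via~\eqref{eq:606} that cohomological support is insensitive to the choice of Hopf structure is a useful elaboration of a point the paper leaves implicit.
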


\begin{proof}
From the linear equivalence $\mcl{L}_\psi$, Theorem \ref{thm:G_class}, and Lemma \ref{lem:614}, we understand that thick ideals in $D^b(\D_\psi)$ are classified by cohomological support, establishing (1).  A direct application of Proposition \ref{prop:tpp_A} now implies (2).
\end{proof}

\subsection{Restrictions of support and the tensor product property}
\label{sect:V}

As above, let $G$ be the $r$-th Frobenius kernel in a smooth algebraic group $\mbb{G}$, and suppose that $\mbb{G}$ admits a quasi-logarithm.

\begin{lemma}\label{lem:751}
Let $\mcl{L}:\rep(\D)\to \rep(\Sigma)$ be the linear equivalence induced by a choice of quasi-logarithm for $\mbb{G}$.  Then for any finite-dimensional $\D$-representation $V$ the isomorphism $\mcl{L}^\ast:|\Sigma|\overset{\cong}\to |\D|$ restricts to an isomorphism of supports $|\Sigma|_{\mcl{L}V}\overset{\cong}\to |\D|_{V}$.
\end{lemma}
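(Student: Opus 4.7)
The strategy is to reduce both supports to their ``local'' pieces on the $\D_\psi$ and $\Sigma_\psi$, where the linear equivalence $\mcl{L}_\psi$ tautologically identifies support. The crucial observation is that for a \emph{local} finite-dimensional Hopf algebra, cohomological support depends only on the abelian category structure (and the choice of unit), by the identity \eqref{eq:606} of Subsection~\ref{sect:loc1}.

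Concretely, I would first apply Proposition~\ref{prop:reconstruct} to obtain
\[
|\D|_V \;=\; \bigcup_\psi f_\psi\bigl(|\D_\psi|_{V_K}\bigr).
\]
For each embedded $1$-parameter subgroup $\psi$, the local Hopf algebras $\D_\psi$ and $\Sigma_\psi$ have supports computed via $\Ext^\ast(k,-)$, and the abelian equivalence $\mcl{L}_\psi$ preserves the trivial representation, so $\mcl{L}_\psi^\ast$ identifies $|\Sigma_\psi|_{\mcl{L}_\psi V_K}$ with $|\D_\psi|_{V_K}$. Combining this with the commuting diagram \eqref{eq:soe_sch2}, the displayed formula becomes
\[
|\D|_V \;=\; \mcl{L}^\ast\Bigl(\,\bigcup_\psi f'_\psi\bigl(|\Sigma_\psi|_{\mcl{L}_\psi V_K}\bigr)\Bigr).
\]

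It then suffices to establish a matching reconstruction statement on the $\Sigma$-side, namely
\[
|\Sigma|_{\mcl{L}V} \;=\; \bigcup_\psi f'_\psi\bigl(|\Sigma_\psi|_{\mcl{L}_\psi V_K}\bigr).
\]
For this I would rerun the proof of Proposition~\ref{prop:reconstruct} in the $\Sigma$-setting. The Carlson-module inputs (Proposition~\ref{prop:carlson}, Corollary~\ref{cor:174}, Lemma~\ref{lem:159}) apply directly, since $\Sigma$ and the $\Sigma_\psi$ are genuine Hopf algebras and the inclusions $\Sigma_\psi \hookrightarrow \Sigma_K$ are Hopf embeddings. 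What one must supply is the analog of the projectivity test of Theorem~\ref{thm:proj_check}: a $\Sigma$-representation $W$ is projective if and only if each $W_K$ is projective over each $\Sigma_\psi$. I would deduce this by transporting Theorem~\ref{thm:proj_check} across the equivalences $\mcl{L}$ and $\mcl{L}_\psi$, using that abelian equivalences preserve projective objects and that the square \eqref{eq:soe} commutes; this matches restriction along $\Sigma_\psi \hookrightarrow \Sigma_K$ with restriction along $\D_\psi \hookrightarrow \D_K$.

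The main obstacle is ensuring this projectivity test is valid over all field extensions $K/k$ simultaneously, since a single quasi-logarithm $l$ must supply a coherent system of equivalences for the base changes as well. This is handled by the fact that $l$ is defined over $k$, so its base changes $l_K$ furnish quasi-logarithms for $G_K$ whose induced equivalences $\mcl{L}_K$ fit into commutative squares with $\mcl{L}$ under the base-change restriction functors $\rep(\D) \to \rep(\D_K)$ and $\rep(\Sigma) \to \rep(\Sigma_K)$. Once the projectivity test is in place, the Carlson module argument from the proof of Proposition~\ref{prop:reconstruct} produces the displayed $\Sigma$-reconstruction, and concatenation with the $\D$-reconstruction yields $\mcl{L}^\ast(|\Sigma|_{\mcl{L}V}) = |\D|_V$, as desired.
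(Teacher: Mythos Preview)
Your proposal is correct and follows essentially the same route as the paper. The paper likewise transports the projectivity test of Theorem~\ref{thm:proj_check} across the equivalences of Proposition~\ref{prop:809} to obtain the $\Sigma$-side reconstruction formula, then combines it with Proposition~\ref{prop:reconstruct} via the diagram~\eqref{eq:soe_sch2}; your treatment of the base-change coherence issue is a bit more explicit than the paper's, but the underlying argument is the same.
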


\begin{proof}
Via the diagram of equivalences of Proposition \ref{prop:809}, and Theorem \ref{thm:proj_check}, we understand that a $\Sigma$-representation is projective if and only if its restriction to each of the $\Sigma_\psi$ is projective.  We can therefore repeat the proof of Proposition \ref{prop:reconstruct} to obtain a reconstruction of support
\[
|\Sigma|_W=\bigcup_{1\text{-param subgroups}}f'_\psi\left(|\Sigma_\psi|_{W_K}\right)
\]
for any $\Sigma$-representation $W$, where the $f'_\psi$ are the maps on projective spectra induced by restriction.
\par

The above expression, and the analogous expression of Proposition \ref{prop:reconstruct} therefore imply the claimed equality.  To argue this point more clearly, take a point $x\in |\Sigma|_{\mcl{L}V}$.  Then $x$ is in the image of some map $f'_\psi:|\Sigma_\psi|_{\mcl{L}_\psi V_K}\to |\Sigma|$.  It follows by the the diagram \eqref{eq:soe_sch2} that $\mcl{L}^\ast(x)\in |\D|$ is in the image of the corresponding map $f_\psi:|\D_\psi|_{V_K}\to |\D|$.  Hence $\mcl{L}^\ast(x)\in |\D|_V$.  This gives an inclusion $\mcl{L}^\ast(|\Sigma|_{\mcl{L}V})\subset |\D|_V$.  Since this argument is completely symmetric, we obtain the opposite inclusion as well and find that we have an identification $\mcl{L}^\ast(|\Sigma|_{\mcl{L}V})=|\D|_V$.
\end{proof}

Recall from Proposition \ref{prop:incl} that, for any embedded $1$-parameter subgroup $\psi$ which is defined over $k$, the map $f_\psi:|\D_\psi|\to |\D|$ is universally injective.  Furthermore, in this case $f_\psi$ is simply the map induced by restriction (i.e.\ it involves no base change).

\begin{proposition}\label{prop:restrict}
Consider any embedded $1$-parameter subgroup $\psi:\mbb{G}_{a(s)}\to G$ which is defined over $k$, and identify $|\D_\psi|$ with a closed subscheme in $|\D|$ via the map induced by restriction (Proposition \ref{prop:incl}).  Then for any finite-dimensional $\D$-representation $V$ we have
\[
|\D_\psi|_V=|\D_\psi|\cap |\D|_V.
\]
\end{proposition}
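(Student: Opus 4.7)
The easy inclusion $|\D_\psi|_V \subset |\D_\psi| \cap |\D|_V$ follows directly from Lemma \ref{lem:297}(2): the map $f_\psi$ sends $|\D_\psi|_V$ into $|\D|_V$, and under the identification of $|\D_\psi|$ with its image in $|\D|$ (valid since $f_\psi$ is universally injective by Proposition \ref{prop:incl}), this gives $|\D_\psi|_V \subset |\D_\psi| \cap |\D|_V$.

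For the reverse inclusion, the plan is to transport the claim across the system of linear equivalences from Proposition \ref{prop:809} and appeal to the established theory of supports for infinitesimal group schemes. Fix a quasi-logarithm for $\mbb{G}$, producing compatible abelian equivalences $\mcl{L}:\rep(\D) \overset{\sim}\to \rep(\Sigma)$ and $\mcl{L}_\psi:\rep(\D_\psi) \overset{\sim}\to \rep(\Sigma_\psi)$ that commute with restriction. Set $W := \mcl{L}V$, so that $\mcl{L}_\psi(\res_\psi V) = \res_\psi W$. By Lemma \ref{lem:751} the homeomorphism $\mcl{L}^\ast$ identifies $|\Sigma|_W$ with $|\D|_V$; since $\D_\psi$ and $\Sigma_\psi$ are local, formula \eqref{eq:606} shows that supports there depend only on the abelian structure, so $\mcl{L}_\psi^\ast$ identifies $|\Sigma_\psi|_{\res_\psi W}$ with $|\D_\psi|_V$. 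Diagram \eqref{eq:soe_sch} makes these identifications compatible with $f_\psi$ and $f'_\psi$. Hence the proposition reduces to the analogous statement
\[
|\Sigma_\psi|_{\res_\psi W} = |\Sigma_\psi| \cap |\Sigma|_W
\]
for the closed subgroup $\Sigma_\psi \subset \Sigma$ (defined over $k$ since $\psi$ is).

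This remaining identity is classical. Both $\Sigma$ and $\Sigma_\psi$ are infinitesimal group schemes of height $\leq r$, so Suslin--Friedlander--Bendel provides homeomorphisms $\mbb{P}(V_r(\Sigma))\overset{\cong}\to |\Sigma|$ and $\mbb{P}(V_r(\Sigma_\psi))\overset{\cong}\to |\Sigma_\psi|$ that identify cohomological support with the non-projectivity locus and are functorial in closed embeddings of group schemes. The identity thus translates to $V_r(\Sigma_\psi)_{\res_\psi W} = V_r(\Sigma_\psi)\cap V_r(\Sigma)_W$, which is tautological: for any $1$-parameter subgroup $\alpha:\mbb{G}_{a(t)}\to \Sigma_\psi \hookrightarrow \Sigma$, the pullbacks $\alpha^\ast W$ and $\alpha^\ast(\res_\psi W)$ coincide, so $\alpha$ detects non-projectivity in $\Sigma$ precisely when it does in $\Sigma_\psi$.

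The main obstacle in executing this plan is the bookkeeping needed to transport the claim through the pair of linear equivalences $\mcl{L}$ and $\mcl{L}_\psi$ while keeping supports and restriction maps in register; once the framework of Section \ref{sect:Sigma} is in hand and one accepts the $V_r$-picture for $\Sigma$ and $\Sigma_\psi$, the group-scheme reduction is essentially formal.
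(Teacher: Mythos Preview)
Your proposal is correct and follows essentially the same approach as the paper: reduce the statement across the linear equivalences $\mcl{L}$, $\mcl{L}_\psi$ (via Lemma~\ref{lem:751} and diagram~\eqref{eq:soe_sch2}) to the analogous identity $|\Sigma_\psi|_W = |\Sigma_\psi|\cap|\Sigma|_W$ for the infinitesimal group schemes $\Sigma_\psi\subset\Sigma$, and then invoke Suslin--Friedlander--Bendel. The paper simply cites \cite[Corollary 5.4.1, Proposition 7.4]{suslinfriedlanderbendel97} at that point, whereas you unpack the SFB identification with the $V_r$-picture and observe that the identity there is tautological; this is a reasonable elaboration. One small caution: your assertion that $\Sigma$ and $\Sigma_\psi$ have height $\leq r$ is not obviously correct as stated (the Cartier-dual factor $N_{\mfk{g}^\ast}(r)$ need not have height bounded by $r$), but this is harmless---just replace $r$ by any $r'$ large enough to bound the heights, and the tautology on $V_{r'}$ goes through unchanged.
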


\begin{proof}
By the diagram \eqref{eq:soe_sch2}, and Lemma \ref{lem:751}, it suffices to check that we have an equality
\[
|\Sigma_\psi|_{W}=|\Sigma_\psi|\cap |\Sigma|_{W}
\]
for any finite-dimensional $\Sigma$-representation $W$.  However, the above equality follows from the analysis of support for infinitesimal group schemes given in \cite{suslinfriedlanderbendel97}--in particular \cite[Corollary 5.4.1, Proposition 7.4]{suslinfriedlanderbendel97}.
\end{proof}

We can now prove that cohomological support for the Drinfeld double $\D$ satisfies the tensor product property.

\begin{theorem}\label{thm:tensor}
Consider a Frobenius kernel $G=\mbb{G}_{(r)}$ in a smooth algebraic group $\mbb{G}$.  Suppose also that $\mbb{G}$ admits a quasi-logarithm.  Then for any finite-dimensional $\D$-representations $V$ and $W$ we have
\[
|\D|_{(V\ot W)}=|\D|_V\cap |\D|_W
\]
\end{theorem}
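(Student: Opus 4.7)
The plan is to prove the two inclusions separately, reducing each to an analysis on the local subalgebras $\mathcal{D}_\psi$, where the tensor product property is already in hand by Proposition \ref{prop:tpp_Dp}(2). The basic mechanism is the reconstruction formula of Proposition \ref{prop:reconstruct}, which expresses $|\mathcal{D}|_V$ as a union of images $f_\psi(|\mathcal{D}_\psi|_{V_K})$ over all embedded $1$-parameter subgroups $\psi$.

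For the inclusion $|\mathcal{D}|_{(V\ot W)} \subset |\mathcal{D}|_V\cap |\mathcal{D}|_W$, I would take a point $x$ in the LHS and use Proposition \ref{prop:reconstruct} to produce a $1$-parameter subgroup $\psi:\mathbb{G}_{a(s),K}\to G_K$ together with $y\in |\mathcal{D}_\psi|_{(V\ot W)_K}$ with $f_\psi(y)=x$. Because $\mathcal{D}_\psi\hookrightarrow \mathcal{D}_K$ is a Hopf embedding, restriction along it is a tensor functor, so $(V\ot W)_K$ restricts to $V_K|_{\mathcal{D}_\psi}\ot W_K|_{\mathcal{D}_\psi}$, and the local tensor product property yields $y\in |\mathcal{D}_\psi|_{V_K}\cap |\mathcal{D}_\psi|_{W_K}$. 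Hence $x=f_\psi(y)\in f_\psi(|\mathcal{D}_\psi|_{V_K})\cap f_\psi(|\mathcal{D}_\psi|_{W_K})\subset |\mathcal{D}|_V\cap |\mathcal{D}|_W$.

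For the reverse inclusion I would first reduce to the case $k=\bar k$ with $x$ a closed point: both sides are closed subsets of $|\mathcal{D}|$, and by Lemma \ref{lem:297}(1) the projection $|\mathcal{D}|_{\bar k}\to |\mathcal{D}|$ is surjective and identifies supports via preimage, so it suffices to test inclusion on $\bar k$-points. Once $k=\bar k$, I would pick a closed $x\in |\mathcal{D}|_V\cap |\mathcal{D}|_W$ and apply Proposition \ref{prop:reconstruct} (in the sharpened closed-point form appearing in the Claim within its proof) to produce an embedded $1$-parameter subgroup $\psi:\mathbb{G}_{a(s)}\to G$ defined over $k$ with $x\in f_\psi(|\mathcal{D}_\psi|_V)$. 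By Proposition \ref{prop:incl}, $f_\psi$ is universally injective, so I may identify $|\mathcal{D}_\psi|$ with a closed subscheme of $|\mathcal{D}|$ and regard $x\in |\mathcal{D}_\psi|$. Proposition \ref{prop:restrict} now yields simultaneously
\[
x\in |\mathcal{D}_\psi|\cap |\mathcal{D}|_V=|\mathcal{D}_\psi|_V \quad\text{and}\quad x\in |\mathcal{D}_\psi|\cap |\mathcal{D}|_W=|\mathcal{D}_\psi|_W.
\]
The local tensor product property (Proposition \ref{prop:tpp_Dp}(2)) gives $x\in |\mathcal{D}_\psi|_{(V\ot W)}$, and Lemma \ref{lem:297}(2) (or reconstruction) places $x=f_\psi(x)$ in $|\mathcal{D}|_{(V\ot W)}$, as desired.

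The main obstacle is producing a \emph{single} $\psi$ that witnesses $x$ in the support of both $V$ and $W$ at once; the reconstruction formula only supplies, a priori, possibly distinct $\psi_V$ and $\psi_W$ for each. This is overcome precisely by Proposition \ref{prop:restrict}: once one $\psi$ places $x$ inside $|\mathcal{D}_\psi|$ (using $x\in |\mathcal{D}|_V$), the same $\psi$ automatically detects $W$ at $x$, because $|\mathcal{D}_\psi|_W$ is literally $|\mathcal{D}_\psi|\cap |\mathcal{D}|_W$. Proposition \ref{prop:restrict} in turn rests on the linear equivalences $\mathcal{L}_\psi:\rep(\mathcal{D}_\psi)\overset{\sim}\to \rep(\Sigma_\psi)$ coming from a quasi-logarithm and on the corresponding sharp restriction property for infinitesimal group schemes of \cite{suslinfriedlanderbendel97}; this is where the hypothesis that $\mathbb{G}$ admits a quasi-logarithm is essentially used.
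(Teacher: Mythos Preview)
Your proposal is correct and follows essentially the same route as the paper: for the nontrivial inclusion you base change so that $\psi$ is defined over the ground field, invoke Proposition~\ref{prop:restrict} to place $x$ in both $|\D_\psi|_V$ and $|\D_\psi|_W$ simultaneously, and then apply the local tensor product property of Proposition~\ref{prop:tpp_Dp}. The only cosmetic difference is that for the inclusion $|\D|_{(V\ot W)}\subset |\D|_V\cap |\D|_W$ the paper simply cites the braiding on $\rep(\D)$ via \cite[Proposition~3.3]{berghplavnikwitherspoon21}, whereas you run the same local-$\D_\psi$ argument in reverse; the paper explicitly notes your alternative works as well.
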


\begin{proof}
Consider any point in the intersection $x\in |\D|_V\cap |\D|_W$, and let $\psi:\mbb{G}_{a(s),K}\to G_K$ be any embedded $1$-parameter subgroup for which $x$ is in the image of the map $|\D_\psi|\to |\D|$.  Let $x'\in |\D_\psi|$ be any lift of $x$.  Since the support of $V_K$ (resp.\ $W_K$) over $\D_K$ is simply the preimage of $|\D|_V$ (resp. $|\D|_W$) along the projection $|\D_K|\to |\D|$, by Lemma \ref{lem:297}, we have $x'\in |\D_K|_{V_K}\cap |\D_K|_{W_K}$.  So, by changing base, we may assume that $x$ is the image of $|\D_\psi|$, for $\psi:\mbb{G}_{a(s)}\to G$ a $1$-parameter subgroup which is defined over $k$.
\par

Since $x$ is in $|\D|_V$, $|\D|_W$, and $|\D_\psi|$, Proposition \ref{prop:restrict} implies
\[
x\in |\D_\psi|_V\cap |\D_\psi|_W.
\]
By the tensor product property for $\D_\psi$, Proposition \ref{prop:tpp_Dp}, we then have $x\in |\D_\psi|_{(V\ot W)}$.  From the inclusion $|\D_\psi|_X\subset |\D|_X$, for arbitrary $X$, we see that $x$ is in $|\D|_{(V\ot W)}$ as well.  We therefore have an inclusion $(|\D|_V\cap |\D|_W)\subset |\D|_{(V\ot W)}$.
\par

For the opposite inclusion $|\D|_{(V\ot W)}\subset (|\D|_V\cap |\D|_W)$, one can restrict to some choice of $\D_\psi$ and argue similarly.  However, since the representation category $\rep(\D)$ is braided, this opposite inclusion actually comes for free.  See for example \cite[Proposition 3.3]{berghplavnikwitherspoon21}.
\end{proof}

\section{Support and tensor products for infinite-dimensional representations}
\label{sect:bigtensor}

We consider support for infinite-dimensional representations over the Drinfeld double $\D=D(G)$.  The support theory which we employ is a kind of ``hybrid theory", which we produce via the restriction functors $\rep(\D)\to \rep(\D_\psi)$ and the Benson-Iyengar-Krause (local cohomology) support theory for the $\D_\psi$.  We prove that this hybrid support theory detects projectivity of arbitrary $\D$-representations, and admits a sufficiently strong tensor product property.
\par

The results of this section provide the necessary foundations for our analysis of thick ideals in the (small) stable category $\stab(\D)$ in Section \ref{sect:thicK}.

\subsection{Stable categories}
Let $A$ be a finite-dimensional Hopf algebra.  We consider the stable categories $\stab(A)$ and $\Stab(A)$ for $A$.  These are the quotient categories of $\rep(A)$ and $\Rep(A)$, respectively, by the tensor ideal consisting of all morphisms which factor through a projective.
\par

In addition to the derived category $D^b(A)$ of finite-dimensional representations over $A$, we consider
\[
D^b_{\rm big}(A)=\{\text{The bounded derived category of arbitrary $A$-representations}\}.
\]
We have canonical equivalences to the Verdier quotients
\[
\stab(A)\overset{\sim}\to D^b(A)/\langle \operatorname{proj}(A)\rangle,\ \ \Stab(A)\overset{\sim}\to D^b_{\rm big}(A)/\langle \Proj(A)\rangle
\]
\cite{rickard89}, which provide the stable categories with triangulated structures.  These equivalences also provide actions of the extension algebra $\Ext_A^\ast(k,k)$ on the stable representation categories
\[
-\ot M:\Ext^\ast_A(k,k)\to \Hom^\ast_{\Stab}(M,M)\ \ \forall\ M\in \Stab(A).
\]
\par

The inclusion $\stab(A)\to \Stab(A)$ is exact and fully faithful, and identifies the small stable category with the subcategory of compact objects in $\Stab(A)$.

\subsection{Local cohomology support}

Let $A$ be a finite-dimensional Hopf algebra with finite type cohomology.  We suppose additionally that cohomological support for finite-dimensional $A$-representations satisfies the inclusion
\begin{equation}\label{eq:848}
|A|_{V\ot W}\subset \big(|A|_V\cap |A|_W\big).
\end{equation}
For example, we might consider $A$ to be a local Hopf algebra with finite type cohomology (see Lemma \ref{lem:incl}).
\par

Take $E_A:=\Ext^\ast_A(k,k)$.  As remarked above, we have natural actions of $E_A$ on objects in the big stable category $\Stab(A)$, which collectively constitute a map to the graded center $E_A\to Z(\Stab(A))=\operatorname{End}_{\operatorname{Fun}}(id_{\Stab(A)})$.  Given this situation, we can consider the local cohomology support of Benson, Iyengar, and Krause \cite{bensoniyengarkrause08}.  This support theory is defined via certain triangulated endofunctors $\Gamma_p:\Stab(A)\to \Stab(A)$ associated to (arbitrary) points in the projective spectrum $|A|=\Proj(E_A)$.  We have specifically
\begin{equation}\label{eq:lc_supp}
\supp^{lc}_A(M):=\{p\in |A|: \Gamma_p(M)\neq 0\}
\end{equation}
\cite[\S 5.1]{bensoniyengarkrause08}.  We note that the points $p$ appearing in the above formula are not necessarily closed, and that supports of objects in $\Stab(A)$ are not necessarily closed in $|A|$.
\par

Since the support theory \eqref{eq:lc_supp} is defined via the vanishing of certain triangulated endofunctors, it behaves appropriately under sums, shifts, and exact triangles.  Specifically, the support of a sum $M\oplus M'$ is the union of the supports of $M$ and $M'$, support is invariant under the shift automorphism, and the support of an object $N$ which fits into a triangule $M\to N\to M'\to \Sigma M$ is contained in the union $\supp^{lc}_A(M)\cup \supp_A^{lc}(M')$.
\par

By pulling back along the quotient $D^b_{\rm big}(A)\to \Stab(A)$, we may consider local cohomology support $\supp^{lc}_A$ as a support theory which also takes $A$-complexes as inputs as well.

\begin{theorem}[\cite{bensoniyengarkrause08}]\label{thm:properties}
For $A$ as above, the following hold:
\begin{enumerate}
\item $M$ vanishes in $\Stab(A)$ if and only if $\supp^{lc}_A(M)=\emptyset$.
\item For any object $V$ in $D^b(A)$, i.e.\ any bounded complex of finite-dimensional representations, there is an equality of supports
\[
\supp^{lc}_A(V)=|A|_V.
\]
\item For arbitrary $M$ and $N$ in $D^b_{\rm big}(A)$, local cohomology support satisfies
\[
\supp^{lc}_A(M\ot N)\subset \big(\supp^{lc}_A(M)\cap\supp^{lc}_A(N)\big).
\]
\end{enumerate}
\end{theorem}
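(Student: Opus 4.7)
The plan is to apply the Benson-Iyengar-Krause machinery of local cohomology functors for a triangulated category carrying a central action of a graded noetherian ring, as developed in \cite{bensoniyengarkrause08}. The finite type cohomology hypothesis on $A$ ensures $E_A=\Ext^*_A(k,k)$ is graded noetherian and that its action on $\Stab(A)$ meets the standing requirements of the BIK framework; in particular each $\Gamma_p$ is a coproduct-preserving triangulated endofunctor, and $\Stab(A)$ is compactly generated by $\stab(A)$.

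Part (1) is the BIK detection principle: the family $\{\Gamma_p\}_{p\in|A|}$ is jointly conservative on $\Stab(A)$, so $M$ vanishes stably exactly when every $\Gamma_p M$ vanishes. For part (2), I would restrict to a compact $V\in D^b(A)$, where BIK identify $\supp^{lc}(V)$ with the support on $\Proj(E_A)$ of the graded endomorphism module $\Hom^*_{\Stab(A)}(V,V)$. Since $A$ is Frobenius, this stable Hom agrees with $\Ext^*_A(V,V)$ in all but finitely many degrees, so the two determine the same coherent sheaf on $|A|$, yielding $\supp^{lc}(V)=|A|_V$ by definition of cohomological support.

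For part (3), the compact case is immediate: for $V,W\in D^b(A)$, part (2) combined with the standing hypothesis \eqref{eq:848} gives $\supp^{lc}(V\ot W)=|A|_{V\ot W}\subset |A|_V\cap |A|_W$. To pass to arbitrary $M,N$, express them as filtered homotopy colimits of compacts $M\simeq\operatorname{hocolim}_i V_i$ and $N\simeq \operatorname{hocolim}_j W_j$, use that the tensor action on $\Stab(A)$ preserves homotopy colimits on either side, and that each $\Gamma_p$ commutes with coproducts. Then
\[
\supp^{lc}(M\ot N)\subset \bigcup_{i,j}\supp^{lc}(V_i\ot W_j)\subset\bigcup_{i,j}\bigl(|A|_{V_i}\cap |A|_{W_j}\bigr)\subset \supp^{lc}(M)\cap\supp^{lc}(N).
\]
The main obstacle is verifying the compatibility of $\Gamma_p$ with the non-symmetric tensor structure on $\Stab(A)$: unlike the classical symmetric BIK setting, the two $E_A$-module structures on $M\ot N$ coming from the left and right factors must both be controlled, and one needs the centrality of $E_A\to Z(\Stab(A))$ to promote each $\Gamma_p$ to a tensor-ideal idempotent on both sides. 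Once this tensor compatibility is in hand, the hocolim argument above closes out the proof.
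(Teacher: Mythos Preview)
Your treatment of (1) and (2) is fine and matches the paper: both are read off directly from \cite[Theorem 5.13]{bensoniyengarkrause08}, using that $\Stab(A)$ is compactly generated with noetherian $E_A$-action, and that for compact $V$ the stable endomorphisms agree with $\Ext^\ast_A(V,V)$ in all large degrees.

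The argument for (3), however, has a genuine gap. Your final displayed inclusion
\[
\bigcup_{i,j}\bigl(|A|_{V_i}\cap |A|_{W_j}\bigr)\subset \supp^{lc}_A(M)\cap\supp^{lc}_A(N)
\]
is not valid. Writing $M\simeq\operatorname{hocolim}_i V_i$ and using that $\Gamma_p$ preserves coproducts gives only $\supp^{lc}_A(M)\subset \bigcup_i |A|_{V_i}$, not the reverse containment you need. Concretely, take $N=k$; your chain would force $\bigcup_i |A|_{V_i}\subset \supp^{lc}_A(M)$, which fails badly: for a non-closed point $p$ the object $\Gamma_p(k)$ has support $\{p\}$, yet any presentation of it as a filtered homotopy colimit of compacts must use compacts whose (closed) supports spill outside $\{p\}$. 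There is no way to choose the approximating compacts so that their supports stay inside $\supp^{lc}_A(M)$.

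The paper's route to (3) is exactly the point you flag as ``the main obstacle'' but then do not exploit. One uses the hypothesis \eqref{eq:848} to observe that for every specialization closed $\Theta\subset |A|$ the subcategory $\msc{K}_\Theta=\{V\in\stab(A):|A|_V\subset\Theta\}$ is a thick \emph{ideal} in $\stab(A)$. This is precisely what is needed to run the proof of \cite[Theorem 8.2]{bensoniyengarkrause08} and conclude that $\Gamma_p$ is smashing in the strong sense
\[
\Gamma_p(M\ot N)=M\ot \Gamma_p(N)=\Gamma_p(M)\ot N.
\]
From this identity the inclusion (3) is immediate: if $\Gamma_p(M)=0$ or $\Gamma_p(N)=0$ then $\Gamma_p(M\ot N)=0$. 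No colimit bookkeeping is needed once the tensor compatibility of $\Gamma_p$ is in hand, and the colimit argument cannot substitute for it.
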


\begin{proof}
Statements (1) and (2) are covered in \cite[Theorem 5.13]{bensoniyengarkrause08}.  For the claimed inclusion (3), we note that for any specialization closed subset $\Theta\subset |A|$ the containment \eqref{eq:848} tells us that the subcategory
\[
\msc{K}_\Theta:=\{V\text{ in }\stab(A):|A|_V\subset \Theta\}
\]
is a thick ideal in $\stab(A)$.  Thus one follows the proof of \cite[Theorem 8.2]{bensoniyengarkrause08} to see that
\[
\Gamma_p(M\ot N)=M\ot \Gamma_p(N)=\Gamma_p(M)\ot N.
\]
From the above equation, and the definition of the support $\supp^{lc}_A$, we deduce the inclusion of (3).
\end{proof}

\subsection{$\bpsi$-local support for $\D$-representations}
\label{sect:psi_loc}

Consider an infinitesimal group scheme $G$, with associated Drinfeld double $\D=D(G)$.  Let $M$ be an object in the bounded derived category $D^b_{\rm big}(\D)$ of arbitrary $\D$-representations, and recall the maps $f_\psi:|\D_\psi|\to |\D|$ induced by restriction \eqref{eq:f_psi}.  We define the support
\begin{equation}\label{eq:psi-loc}
\supp^{\ploc}(M):=\bigcup_{1\text{-param subgroups}}f_\psi\left(\supp^{lc}_{\D_\psi}(\res_\psi M_K)\right),
\end{equation}
where the union runs over all embedded $1$-parameter subgroups $\psi:\mbb{G}_{a(s),K}\to G_K$, and $\res_\psi:\rep(G_K)\to \rep(\mbb{G}_{a(s),K})$ denotes the restriction functor.  As in Proposition \ref{prop:reconstruct}, \eqref{eq:psi-loc} defines the support $\supp^{\ploc}(M)$ as a union of sub\emph{sets} in the projective spectrum of cohomology $|\D|$.
\par

We refer to the support \eqref{eq:psi-loc} as the $\bpsi$-local support of $M$.  Note that this support takes values in the projective spectrum of cohomology $|\D|$.  By pulling back along the quotient map
\[
D^b_{\rm big}(\D)\to \Stab(\D)
\]
we freely consider the $\bpsi$-local support as a support theory for the bounded derived category of arbitrary $\D$-representations as well.

\begin{remark}
We have used a boldface $\bpsi$ in our notation to indicate that $\bpsi$ might be thought of as a coordinate which ranges over the space of $1$-parameter subgroups.
\end{remark}

We list some basic properties of $\bpsi$-local support.

\begin{lemma}\label{lem:psi-loc_props}
For any infinitesimal group scheme $G$, $\bpsi$-local support satisfies the following:
\begin{itemize}
\item $\supp^{\ploc}(M)=\emptyset$ if and only if $M$ vanishes in the stable category $\Stab(\D)$.\vspace{1mm}
\item $\supp^{\ploc}(M\oplus N)=\supp^{\ploc}(M)\cup\supp^{\ploc}(N)$.\vspace{1mm}
\item For any triangle $M\to N\to M'$,
\[
\supp^{\ploc}(N)\subset \left(\supp^{\ploc}(M)\cup \supp^{\ploc}(M')\right).
\]
\item $\supp^{\ploc}(M\ot N)\subset \left(\supp^{\ploc}(M)\cap \supp^{\ploc}(N)\right)$.\vspace{1mm}
\item $\supp^{\ploc}(\Sigma M)=\supp^{\ploc}(M)$.\vspace{1mm}
\item For any $V$ in $D^b(\D)$, $\supp^{\ploc}(V)=|\D|_V$.
\end{itemize}
\end{lemma}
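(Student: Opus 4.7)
The plan is to reduce each of the six assertions to the corresponding property of the local cohomology support $\supp^{lc}_{\D_\psi}$, then reassemble the local information via the union in the definition of $\supp^{\ploc}$. The key enabling observation is that each Hopf algebra $\D_\psi$ is local (hence satisfies the tensor product containment \eqref{eq:848} by Lemma \ref{lem:incl}) and has finite type cohomology (Corollary \ref{cor:452}), so Theorem \ref{thm:properties} applies to $\D_\psi$. Moreover, since $\D_\psi \subset \D_K$ is a Hopf subalgebra, the restriction functor $\res_\psi: D^b_{\rm big}(\D_K) \to D^b_{\rm big}(\D_\psi)$ is an exact monoidal functor which commutes with base change $(-)_K$. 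In particular it preserves direct sums, shifts, exact triangles, and tensor products.

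Granting this, properties (2), (3) and (5) are immediate: the analogous statements for $\supp^{lc}_{\D_\psi}$ applied to $\res_\psi M_K$, $\res_\psi N_K$, and $\res_\psi(\Sigma M)_K$ hold by Theorem \ref{thm:properties} and the elementary behavior of $\Gamma_p$, and taking images under each $f_\psi$ and then unioning over all $1$-parameter subgroups preserves unions and containments. For property (4), the monoidality of $\res_\psi$ gives $\res_\psi(M\otimes N)_K = \res_\psi M_K \otimes \res_\psi N_K$; Theorem \ref{thm:properties}(3) then yields the containment inside each $f_\psi(\supp^{lc}_{\D_\psi}(\res_\psi M_K) \cap \supp^{lc}_{\D_\psi}(\res_\psi N_K))$, and the elementary observation $f_\psi(A\cap B) \subset f_\psi(A)\cap f_\psi(B)$ together with the fact that each $\psi$-contribution lands in both factor unions produces the desired intersection inclusion.

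For property (1), Theorem \ref{thm:properties}(1) says that $\supp^{lc}_{\D_\psi}(\res_\psi M_K)$ is empty precisely when $\res_\psi M_K$ vanishes in $\Stab(\D_\psi)$, i.e.\ is projective over $\D_\psi$. Hence $\supp^{\ploc}(M)=\emptyset$ iff $M_K$ is projective over every $\D_\psi$ for every field extension $K/k$ and every $\psi:\mathbb{G}_{a(s),K}\to G_K$; by the projectivity test Theorem \ref{thm:proj_check}, this is equivalent to $M$ being projective over $\D$, i.e.\ vanishing in $\Stab(\D)$.

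Finally, for (6), Theorem \ref{thm:properties}(2) applied to $\D_\psi$ gives $\supp^{lc}_{\D_\psi}(\res_\psi V_K) = |\D_\psi|_{\res_\psi V_K}$ for any $V$ in $D^b(\D)$, so $\supp^{\ploc}(V) = \bigcup_\psi f_\psi(|\D_\psi|_{V_K})$. When $V$ is concentrated in a single degree (a finite-dimensional representation) this union equals $|\D|_V$ by Proposition \ref{prop:reconstruct}. The main subtlety, and the one point where some care is needed, is to pass from modules to bounded complexes: one induces on the amplitude of $V$, using the stupid truncation triangles, property (3) already proved, and the compatibility of ordinary cohomological support with triangles on $D^b(\D)$, to conclude $\supp^{\ploc}(V) = |\D|_V$ in general.
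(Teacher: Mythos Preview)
Your approach matches the paper's: reduce each property to the corresponding property of $\supp^{lc}_{\D_\psi}$ (via Theorem \ref{thm:properties}, which applies since each $\D_\psi$ is local with finite type cohomology), use that restriction is an exact tensor functor, invoke Theorem \ref{thm:proj_check} for (1), and invoke Proposition \ref{prop:reconstruct} together with Theorem \ref{thm:properties}(2) for (6). The paper's own proof is essentially a three-sentence version of what you wrote.

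There is one small gap in your treatment of (6). The induction on amplitude via truncation triangles and property (3) only yields containments of the form $\supp^{\ploc}(V)\subset \supp^{\ploc}(V')\cup\supp^{\ploc}(V'')$ and $|\D|_V\subset |\D|_{V'}\cup|\D|_{V''}$ for the pieces of a truncation triangle; knowing that the two theories agree on $V'$ and $V''$ does not force them to agree on $V$. A cleaner route is to observe that both $\supp^{\ploc}$ and $|\D|_\star$ depend only on the image of $V$ in $\stab(\D)\simeq D^b(\D)/\operatorname{perf}(\D)$, and every object of $\stab(\D)$ is represented by an honest module in $\rep(\D)$; hence agreement on modules (Proposition \ref{prop:reconstruct}) already gives agreement on all of $D^b(\D)$. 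Alternatively, one can rerun the Carlson-module argument from the proof of Proposition \ref{prop:reconstruct} verbatim for complexes, replacing the module-level projectivity test by property (1), which you have already established. The paper's proof is equally terse on this passage, simply citing Proposition \ref{prop:reconstruct}.
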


In the above formulas $M$, $M'$, and $N$ are arbitrary objects in $D^b_{\rm big}(\D)$.

\begin{proof}
The first point follows by the projectivity test of Theorem \ref{thm:proj_check}, and the detection propert for local cohomology support over $\D_\psi$.  The four subsequent points follow directly from the corresponding properties for the local cohomology supports $\supp^{lc}_{\D_\psi}$, and the fact that restriction is an exact tensor functor.  The final point follows from the identification $\supp^{lc}_{\D_\psi}(V_K)=|\D_\psi|_{V_K}$ and the reconstruction formula of Proposition \ref{prop:reconstruct}.
\end{proof}

\subsection{$\bpsi$-local support and tensor products}

\begin{theorem}\label{thm:bigtpp}
Consider a Frobenius kernel $G$ in a smooth algebraic group $\mbb{G}$.  Suppose that $\mbb{G}$ admits a quasi-logarithm.  Then for any object $V$ in $D^b(\D)$, and any $M$ in $D^b_{\rm big}(\D)$, we have
\begin{equation}\label{eq:970}
\supp^{\ploc}(V\ot M)=\supp^{\ploc}(V)\cap \supp^{\ploc}(M).
\end{equation}
\end{theorem}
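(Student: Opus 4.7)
The plan is to reduce \eqref{eq:970} to a ``hybrid'' tensor product property for local cohomology support over each local subalgebra $\D_\psi$, exploiting the expression of $\supp^{\ploc}$ as a union over $1$-parameter subgroups. Since the $\subseteq$ inclusion is already recorded in Lemma \ref{lem:psi-loc_props}, the real content is the opposite inclusion.

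I would begin by taking $x\in \supp^{\ploc}(V)\cap \supp^{\ploc}(M)$. Finite-dimensionality of $V$ gives $\supp^{\ploc}(V)=|\D|_V$, while $x\in \supp^{\ploc}(M)$ furnishes an embedded $1$-parameter subgroup $\psi\colon\mbb{G}_{a(s),K}\to G_K$ together with a point $x''\in \supp^{lc}_{\D_\psi}(\res_\psi M_K)$ satisfying $f_\psi(x'')=x$. Lemma \ref{lem:297}(1) places the image of $x''$ in $|\D_K|$ inside $|\D_K|_{V_K}$, after which the $K$-form of Proposition \ref{prop:restrict} yields $x''\in |\D_\psi|_{V_K}=\supp^{lc}_{\D_\psi}(\res_\psi V_K)$. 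It therefore suffices to prove, for each such $\psi$, the hybrid tensor product property
\[
\supp^{lc}_{\D_\psi}(V'\ot M')\supseteq \supp^{lc}_{\D_\psi}(V')\cap \supp^{lc}_{\D_\psi}(M')
\]
for finite-dimensional $V'$ and arbitrary $M'\in D^b_{\rm big}(\D_\psi)$; applying this to $V_K$ and $M_K$ and pushing $x''$ forward along $f_\psi$ then recovers $x\in \supp^{\ploc}(V\ot M)$.

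To establish the hybrid property for $\D_\psi$, I would appeal to the Benson--Iyengar--Krause stratification framework. The two required inputs are noetherianity of $\Ext^\ast_{\D_\psi}(k,k)$, provided by Corollary \ref{cor:452}, and classification of thick ideals in $\stab(\D_\psi)$ by cohomological support, provided by Proposition \ref{prop:tpp_Dp}(1). Together these yield a stratification of $\Stab(\D_\psi)$, which in turn implies the full two-sided tensor product property for local cohomology support on $\Stab(\D_\psi)$, a fortiori the hybrid form above. The main obstacle I anticipate is this last implication: one must check that the classification of thick ideals, which was produced via the a priori non-tensor abelian equivalence $\mcl{L}_\psi$, genuinely feeds into the BIK machinery. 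A more hands-on alternative would bypass BIK altogether by combining Carlson modules with the small tensor product property of Proposition \ref{prop:tpp_Dp}(2): given $p$ in the intersection, choose a product $L$ of Carlson modules with $|\D_\psi|_L=\overline{\{p\}}$, use Proposition \ref{prop:carlson} to show that $p\in \supp^{lc}_{\D_\psi}(L\ot M')$, and then transfer this to $V'\ot M'$ by observing that $L$ lies in the thick ideal of $\stab(\D_\psi)$ generated by $V'$.
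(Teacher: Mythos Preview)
Your global reduction to $\D_\psi$ and your ``hands-on alternative'' for the local step are exactly the paper's argument: the paper isolates the local statement as Proposition~\ref{prop:bigtpp} and proves it by choosing a product $L$ of Carlson modules with $|\D_\psi|_L=|\D_\psi|_W$, using the classification of thick ideals (Proposition~\ref{prop:tpp_Dp}(1)) together with Lemma~\ref{lem:614} to place $L$ in the thick subcategory generated by $W$, and then applying the big-category Carlson formula $\supp^{lc}_{\D_\psi}(L_\zeta\ot N)=Z(\zeta)\cap\supp^{lc}_{\D_\psi}(N)$ from \cite[Lemma~2.6]{bensoniyengarkrause11II}; note this last input is the BIK analogue of Proposition~\ref{prop:carlson} rather than Proposition~\ref{prop:carlson} itself, which is stated only for finite-dimensional $V$. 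Your primary route via full BIK stratification is not what the paper does and, as you anticipated, would require an extra step (transporting minimality/stratification across the non-tensor equivalence $\mcl{L}_\psi$, or invoking stratification for the unipotent group scheme $\Sigma_\psi$), whereas the Carlson-module argument needs only the thick-ideal classification already in hand.
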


Note that, since $\Rep(\D)$ is a braided monoidal category, an identification \eqref{eq:970} implies the corresponding equality for the action of finite-dimensional representations (or complexes) on the right
\[
\supp^{\ploc}(M\ot V)=\supp^{\ploc}(M)\cap \supp^{\ploc}(V),
\]
simply because $V\ot M\cong M\ot V$.  In the language of \cite[Definition 4.7]{negronpevtsova}, we are claiming that cohomological support for $\D$ is a \emph{lavish support theory} for the stable category $\stab(\D)$.
\par

Before proving Theorem \ref{thm:bigtpp}, we prove its local analog.

\begin{proposition}\label{prop:bigtpp}
Let $G$ be as in the statment of Theorem \ref{thm:bigtpp}, and consider an embedded $1$-parameter subgroup $\psi:\mbb{G}_{a(s)}\to G$ which is defined over $k$.  Then for $W$ in $D^b(\D_\psi)$, and $N$ in $D^b_{\rm big}(\D_\psi)$, local cohomology support satisfies
\[
\supp^{lc}_{\D_\psi}(W\ot N)=\supp^{lc}_{\D_\psi}(W)\cap \supp^{lc}_{\D_\psi}(N).
\]
\end{proposition}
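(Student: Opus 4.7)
The plan is to handle the two inclusions separately, with the forward inclusion immediate and the reverse inclusion reduced to the Benson--Iyengar--Krause machinery via the classification of thick ideals from Proposition \ref{prop:tpp_Dp}(1).

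First I would dispatch the inclusion $\supp^{lc}_{\D_\psi}(W\ot N)\subseteq \supp^{lc}_{\D_\psi}(W)\cap \supp^{lc}_{\D_\psi}(N)$. Since $\D_\psi$ is local and has finite type cohomology, Lemma \ref{lem:incl} tells us that cohomological support on $D^b(\D_\psi)$ satisfies the tensor-inclusion property $|\D_\psi|_{V\ot U}\subseteq |\D_\psi|_V\cap |\D_\psi|_U$.  This is precisely the hypothesis \eqref{eq:848} needed to apply Theorem \ref{thm:properties}(3), which delivers the desired containment for local cohomology supports in $D^b_{\rm big}(\D_\psi)$.

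For the reverse inclusion, fix a point $p\in \supp^{lc}_{\D_\psi}(W)\cap \supp^{lc}_{\D_\psi}(N)$. Since $W$ is compact, Theorem \ref{thm:properties}(2) identifies $\supp^{lc}_{\D_\psi}(W)=|\D_\psi|_W$, so $p\in |\D_\psi|_W$. Using Corollary \ref{cor:174}, choose a product of Carlson modules $L$ with $|\D_\psi|_L=\overline{\{p\}}$, and set $V_0:=L\ot W\in D^b(\D_\psi)$. By the finite-dimensional tensor product property for $\D_\psi$ (Proposition \ref{prop:tpp_Dp}(2)), we have $|\D_\psi|_{V_0}=\overline{\{p\}}\cap |\D_\psi|_W$, which contains $p$. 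Thus $V_0$ is a compact object whose cohomological support contains (and is specialization-closed above) $p$. Our goal is to show $p\in \supp^{lc}_{\D_\psi}(V_0\ot N)$, since then the inclusion already proved gives
\[
p\in \supp^{lc}_{\D_\psi}(V_0\ot N)\subseteq \supp^{lc}_{\D_\psi}(L)\cap \supp^{lc}_{\D_\psi}(W\ot N)\subseteq \supp^{lc}_{\D_\psi}(W\ot N),
\]
as desired.

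The main step, and the principal obstacle, is the tensor product formula $\Gamma_p(V_0\ot N)\cong V_0\ot \Gamma_p(N)$ for the BIK local cohomology functors, applied to our compact $V_0$ and arbitrary $N$. This is where Proposition \ref{prop:tpp_Dp}(1) enters decisively: the classification of thick ideals in $\stab(\D_\psi)$ by cohomological support, together with the noetherianity of $\Ext^\ast_{\D_\psi}(k,k)$, is exactly what is needed to invoke the Benson--Iyengar--Krause machine \cite[\S 8]{bensoniyengarkrause08}, yielding that the idempotent $\Gamma_p\1$ is a tensor idempotent and hence the tensor product formula above. Granting this, the hypothesis $\Gamma_p(N)\neq 0$ combined with $p\in \supp^{lc}_{\D_\psi}(V_0)$ forces $V_0\ot \Gamma_p(N)\neq 0$ (one verifies that the thick tensor ideal generated by $V_0$ in $\stab(\D_\psi)$ contains every compact object supported at $\overline{\{p\}}$, so it must interact nontrivially with $\Gamma_p(N)$), and we conclude $\Gamma_p(V_0\ot N)\neq 0$. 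The hard part is therefore the bookkeeping required to verify the BIK tensor-idempotent formalism in our setting from the classification alone, rather than assuming a priori stratification; but this is by now a standard unraveling of \cite[Theorem 8.2]{bensoniyengarkrause08} given the inputs already established.
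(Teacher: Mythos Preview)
Your forward inclusion is fine and matches the paper. The reverse inclusion, however, has a genuine gap at the step ``$\Gamma_p(N)\neq 0$ together with $p\in\supp^{lc}_{\D_\psi}(V_0)$ forces $V_0\ot\Gamma_p(N)\neq 0$.'' The parenthetical justification---that the thick ideal generated by $V_0$ contains all compacts supported on $\overline{\{p\}}$, ``so it must interact nontrivially with $\Gamma_p(N)$''---is not a proof. Knowing that a thick ideal of compacts is large does not by itself tell you that tensoring a fixed compact with a fixed big object is nonzero; for that you would need either stratification (which you explicitly disclaim) or a direct computation. The missing computation is precisely the Carlson formula for big modules, \cite[Lemma 2.6]{bensoniyengarkrause11II}: for any $M$ in $\Stab(\D_\psi)$ one has $\supp^{lc}_{\D_\psi}(L_\zeta\ot M)=Z(\zeta)\cap\supp^{lc}_{\D_\psi}(M)$. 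Without this, your argument does not close.

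Once you have that formula, the detour through $\Gamma_p$ and tensor-idempotents is unnecessary, and the paper's route is both shorter and cleaner. The paper chooses a product of Carlson modules $L$ with $\supp^{lc}_{\D_\psi}(L)=\supp^{lc}_{\D_\psi}(W)$ (not $\overline{\{p\}}$). By Proposition~\ref{prop:tpp_Dp}(1) and Lemma~\ref{lem:614}, $L$ lies in the thick \emph{subcategory} generated by $W$, hence $L\ot N$ lies in the thick subcategory generated by $W\ot N$, giving $\supp^{lc}_{\D_\psi}(L\ot N)\subset\supp^{lc}_{\D_\psi}(W\ot N)$. Then the iterated Carlson formula computes $\supp^{lc}_{\D_\psi}(L\ot N)=\supp^{lc}_{\D_\psi}(L)\cap\supp^{lc}_{\D_\psi}(N)=\supp^{lc}_{\D_\psi}(W)\cap\supp^{lc}_{\D_\psi}(N)$, and you are done globally, with no pointwise analysis and no appeal to the $\Gamma_p$ functors beyond their role in defining support.
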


\begin{proof}
It suffices to prove the inclusion
\[
\supp^{lc}_{\D_\psi}(W)\cap \supp^{lc}_{\D_\psi}(N)\subset \supp^{lc}_{\D_\psi}(W\ot N),
\]
since the opposite inclusion follows by Theorem \ref{thm:properties}.  Since the local cohomology support is defined via the vanishing of the exact endomorphisms $\Gamma_p$, we understand that if $Q'$ in $\Stab(\D_\psi)$ is in the thick subcategory generated by $Q$ then $\supp^{lc}_{\D_\psi}(Q')\subset \supp^{lc}_{\D_\psi}(Q)$.  So it suffices to prove that there is an equality
\[
\supp^{lc}_{\D_\psi}(W)\cap \supp^{lc}_{\D_\psi}(N)=\supp^{lc}_{\D_\psi}(L\ot N)
\]
for some $L$ in the thick subcategory generated by $W$ in $\stab(\D_\psi)$.
\par

Let $L$ be a product of Carlson modules such that $\supp^{lc}_{\D_\psi}(L)=\supp^{lc}_{\D_\psi}(W)$.  By Lemma \ref{lem:614} and Proposition \ref{prop:tpp_Dp}, the object $L$ is in the thick subcategory generated by $V$ in $\stab(\D_\psi)$ and thus $L\ot N$ is in the thick subcategory generated by $W\ot N$ in $\Stab(\D_\psi)$.
\par

Recall that, in the stable category, the Carlson module $L_\zeta$ associated to an extension $\zeta:k\to \Sigma^nk$ is isomorphic to a shift of the mapping cone $\operatorname{cone}(\zeta)$.  So by \cite[Lemma 2.6]{bensoniyengarkrause11II} we have
\[
\supp^{lc}_{\D_\psi}(L_\zeta\ot N)=Z(\zeta)\cap \supp^{lc}_{\D_\psi}(N)=\supp^{lc}_{\D_\psi}(L_\zeta)\cap \supp^{lc}_{\D_\psi}(N)
\]
for any such $L_\zeta$.  It follows that, for our product of Carlson modules $L$, we have
\[
\supp^{lc}_{\D_\psi}(L\ot N)=\supp^{lc}_{\D_\psi}(L)\cap \supp^{lc}_{\D_\psi}(N)=\supp^{lc}_{\D_\psi}(W)\cap \supp^{lc}_{\D_\psi}(N),
\]
as desired.
\end{proof}

We now prove our theorem.

\begin{proof}[Proof of Theorem \ref{thm:bigtpp}]
We have already observed one inclusion in Lemma \ref{lem:psi-loc_props}.  So we need only establish the inclusion
\begin{equation}\label{eq:977}
\supp^{\ploc}(V)\cap \supp^{\ploc}(M)\subset \supp^{\ploc}(V\ot M).
\end{equation}
\par

Consider any point $x$ in the above intersection, and choose an embedded subgroup $\psi:\mbb{G}_{a(s),K}\to G_K$ for which $x$ is the image of a point $x'\in \supp^{lc}_{\D_\psi}(M_K)$.  The naturality property
\[
\supp^{lc}_{\D_\psi}(V_K)=|\D_\psi|\cap |\D_K|_{V_K}
\]
of Proposition \ref{prop:restrict} implies that $x'$ is in $\supp^{lc}_{\D_\psi}(V_K)$ as well.  (See also Lemma \ref{lem:297}.)  We apply the equality
\[
\supp^{lc}_{\D_\psi}(V_K\ot M_K)=\supp^{lc}_{\D_\psi}(V_K)\cap \supp^{lc}_{\D_\psi}(M_K)
\]
of Proposition \ref{prop:bigtpp} to see that $x'\in \supp^{lc}_{\D_\psi}(V_K\ot M_K)$, and hence $x\in \supp^{\ploc}(V\ot M)$ by the definition of the $\bpsi$-local support.  We thus verify the inclusion \eqref{eq:977}, and obtain the proposed tensor product property.
\end{proof}

\section{Thick ideals and the Balmer spectrum}
\label{sect:thicK}

We provide a classification of thick ideals in the stable category $\stab(\D)$, for $\D$ the Drinfeld double of an appropriate Frobenius kernel.  We then apply results of Balmer to calculate the spectrum of prime ideals in the stable category $\stab(\D)$.  In particular, we show that thick ideals are classified by specialization closed subsets in the projective spectrum of cohomology $|\D|$, and we show that the Balmer spectrum is isomorphic to cohomological spectrum $|\D|$ as a locally ringed space.

\subsection{Classification of thick ideals and prime ideal spectra}
\label{sect:ideals}

Let $\D$ be the Drinfeld double of a finite group scheme.  Recall that a specialization closed subset $\Theta$ in $|\D|=\Proj\Ext^\ast_{\D}(k,k)$ is a subset which contains the closures of all of its points.  Equivalently, a specialization closed subset is an arbitrary union of closed subsets in $|\D|$.
\par

For any specialization closed subset $\Theta$ in $|\D|$ we have the associated thick ideal
\[
\msc{K}_\Theta:=\{V\in \stab(\D):|\D|_V\subset \Theta\}
\]
in the stable category $\stab(\D)$.  To see that $\msc{K}_\Theta$ is in fact closed under the tensor actions $\stab(\D)$ on the left and right, one simply consults the inclusion $|\D|_{V\ot W}\subset (|\D|_V\cap |\D|_W)$ provided by the braiding on $\rep(\D)$ \cite[Proposition 3.3]{berghplavnikwitherspoon21}.  Similarly, for any thick ideal $\msc{K}\subset \stab(\D)$ we have the associated support space
\[
|\D|_{\msc{K}}:=\cup_{V\in \msc{K}}|\D|_{V},
\]
which is a specialization closed subset in $|\D|$.  We note that the formal properties of cohomological support imply an equality $|\D|_V=|\D|_{\langle V\rangle_\ot}$ between the support of a given object $V$, and the support of the thick ideal $\langle V\rangle_\ot$ which it generates in $\stab(\D)$.
\par

The two above operations define maps of sets
\begin{equation}\label{eq:class}
\{\text{thick ideals in }\stab(\D)\}\underset{\msc{K}_?}{\overset{|\D|_?}\leftrightarrows} \{\text{specialization closed subsets in }|\D|\}
\end{equation}
which preserve the respective orderings by inclusion.  In rephrasing Definition \ref{def:lil_classify}, we say cohomological support for $\D$ \emph{classifies thick ideals} in $\stab(\D)$ if the two maps in \eqref{eq:class} are mutually inverse bijections.
\par

At this point it is a formality to deduce a classification of thick ideals in the stable category $\stab(\D)$ from the support theoretic results of Lemma \ref{lem:psi-loc_props} and Theorem \ref{thm:bigtpp}.  One can see for example \cite{rickard97}.  We follow the generic presentation of \cite{negronpevtsova}.

\begin{theorem}\label{thm:thick_id}
Consider a smooth algebraic group $\mbb{G}$ which admits a quasi-logarithm, and let $G$ be a Frobenius kernel in $\mbb{G}$.  Then, for the Drinfeld double $\D=D(G)$, cohomological support classifies thick ideals in the stable category $\stab(\D)$.  That is to say, the two maps of \eqref{eq:class} are mutually inverse bijections.
\end{theorem}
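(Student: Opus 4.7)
The plan is to verify that the two assignments of \eqref{eq:class} are mutually inverse. One direction is easy: given a specialization closed subset $\Theta\subset|\D|$, a point $x\in\Theta$ lies in some closed subset $Z\subset\Theta$, which by Corollary \ref{cor:174} equals $|\D|_L$ for a product of Carlson modules $L$; since $|\D|_L\subset\Theta$ we have $L\in\msc{K}_\Theta$ and therefore $x\in|\D|_{\msc{K}_\Theta}$, giving $|\D|_{\msc{K}_\Theta}=\Theta$.

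The substantive direction is to show $\msc{K}_{|\D|_\msc{K}}=\msc{K}$ for an arbitrary thick ideal $\msc{K}\subset\stab(\D)$. Setting $\Theta=|\D|_\msc{K}$, the inclusion $\msc{K}\subset\msc{K}_\Theta$ is tautological, so the issue is to show that any $V\in\stab(\D)$ with $|\D|_V\subset\Theta$ lies in $\msc{K}$. My approach is the standard Rickard-type Bousfield-localization argument, but with the role of BIK-support played by the hybrid $\bpsi$-local support of Section \ref{sect:bigtensor}, since it is for this theory that we have established both a projectivity detection result (Lemma \ref{lem:psi-loc_props}) and a mixed tensor product property (Theorem \ref{thm:bigtpp}).

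Concretely, I would form the localizing subcategory $\msc{L}\subset\Stab(\D)$ generated by $\msc{K}$. Because $\stab(\D)=\Stab(\D)^{c}$ is a rigid tensor subcategory of compact generators and $\msc{K}$ is a tensor ideal therein, $\msc{L}$ is a smashing tensor-ideal localizing subcategory: there is a distinguished localization triangle $\Gamma_{\msc{L}}(\1)\to\1\to L_{\msc{L}}(\1)$ in $\Stab(\D)$ with $L_{\msc{L}}(X)\cong L_{\msc{L}}(\1)\ot X$ for all $X$ and $L_{\msc{L}}(Y)=0$ for $Y\in\msc{L}$. For any $W\in\msc{K}\subset\msc{L}$ we then have $W\ot L_{\msc{L}}(\1)=L_{\msc{L}}(W)=0$ in $\Stab(\D)$, so by Theorem \ref{thm:bigtpp} and Lemma \ref{lem:psi-loc_props} the intersection $|\D|_W\cap\supp^{\ploc}(L_{\msc{L}}(\1))$ vanishes. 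Taking unions over $W\in\msc{K}$ gives $\Theta\cap\supp^{\ploc}(L_{\msc{L}}(\1))=\emptyset$. Tensoring the localization triangle with $V$ and applying Theorem \ref{thm:bigtpp} once more yields $\supp^{\ploc}(V\ot L_{\msc{L}}(\1))=|\D|_V\cap\supp^{\ploc}(L_{\msc{L}}(\1))=\emptyset$, so Lemma \ref{lem:psi-loc_props} forces $V\ot L_{\msc{L}}(\1)=0$ in $\Stab(\D)$ and hence $V\cong V\ot\Gamma_{\msc{L}}(\1)\in\msc{L}$. Neeman's theorem identifies the compact objects of $\msc{L}$ with the thick subcategory of $\stab(\D)$ generated by $\msc{K}$, which is just $\msc{K}$; hence $V\in\msc{K}$.

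The main technical point that needs justification is the smashing property of the Bousfield localization at $\msc{L}$, but this is a general fact for tensor-ideal localizing subcategories of rigidly compactly generated tensor triangulated categories, and does not require any further input about $\D$. All of the genuinely new support-theoretic input has already been placed into Theorem \ref{thm:bigtpp} and Lemma \ref{lem:psi-loc_props}, so the argument is otherwise a routine application of these two tools within Rickard's localization scheme.
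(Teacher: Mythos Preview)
Your argument is correct and is essentially the same approach as the paper's: both rely on Lemma \ref{lem:psi-loc_props} and Theorem \ref{thm:bigtpp} as the only substantive input, followed by the standard Rickard-type localization argument. The paper simply packages that last step as a citation to \cite[Proposition 5.2]{negronpevtsova} (the ``lavish support theory'' formalism), whereas you have written out the Bousfield--Neeman machinery explicitly.
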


\begin{proof}
Theorem \ref{thm:bigtpp} tells us that cohomological support is a lavish support theory for $\stab(\D)$, in the language of \cite[\S 4.3]{negronpevtsova}.  So the claimed classification follows by \cite[Proposition 5.2]{negronpevtsova}.
\end{proof}

We note that, by pulling back along the projection $\pi:D^b(\D)\to \stab(\D)$, we can similarly use cohomology to classify thick ideals in the bounded derived category for $\D$.  Namely, under the map $\pi$ thick ideals in $\stab(\D)$ are identified with thick ideals in $D^b(\D)$ which contain the ideal $\operatorname{perf}(\D)$ of bounded complexes of projectives.  This subcollection of ideals in $D^b(\D)$ is precisely the collection of nonvanishing ideals in $D^b(\D)$.  So we obtain a classification
\[
\{\text{thick ideals in }D^b(\D)\}\cong \{\text{specialization closed subsets in }|\D|\}\cup\{0\}.
\]

\subsection{Prime ideal spectra for Drinfeld doubles}
\label{sect:spectrum}

Consider again the Drinfeld double $\D$ of a finite group scheme $G$.

We recall that the sublattice of thick \emph{prime} ideals in $\stab(\D)$ forms a locally ringed space, which is referred to as the Balmer spectrum
\begin{equation}\label{eq:1078}
\Spec(\stab(\D)):=\left\{\begin{array}{c}
\text{the collection of thick prime ideals in $\stab(\D)$}\\
\text{with the topology and ringed structure described in \cite{balmer05}}
\end{array}\right\}.
\end{equation}
As one might expect, by a thick prime ideal in $\stab(\D)$ we mean a proper thick ideal $\msc{P}$ for which an inclusion $V\ot W\in \msc{P}$ implies either $V\in \msc{P}$ or $W\in \msc{P}$.  We do not recall the topology or the ringed structure on the spectrum here, and refer the reader instead to the highly readable text \cite[\S 1, \S 6]{balmer05}.
\par

As explained in \cite{balmer05,balmer10}, a classification of thick ideals in $\stab(\D)$ via cohomological support implies a corresponding calculation of the prime ideal spectrum.

\begin{theorem}\label{thm:spec}
For $G$ as in Theorem \ref{thm:thick_id}, there is a homeomorphism
\[
f_{\rm coh}:|\D|=\Proj\Ext^\ast_\D(k,k)\overset{\cong}\longrightarrow \Spec(\stab(\D))
\]
defined by taking $f_{coh}(x)=\{V\in \stab(\D):x\notin |\D|_V\}$.  Furthermore, $f_{\rm coh}$ can be upgraded to an isomorphism of locally ringed spaces.
\end{theorem}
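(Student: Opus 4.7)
The plan is to deduce Theorem~\ref{thm:spec} from the classification of thick ideals in Theorem~\ref{thm:thick_id}, together with the tensor product property of Theorem~\ref{thm:tensor}, via Balmer's general reconstruction machinery \cite{balmer05,balmer10}. Both $|\D|$ and the Balmer spectrum carry natural topologies and structure sheaves, and the task is to identify them via the explicit formula for $f_{\rm coh}$.

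First I would verify that $f_{\rm coh}$ lands in $\Spec(\stab(\D))$. Closure of $f_{\rm coh}(x)$ under shifts, summands, and triangles follows from the standard behavior of cohomological support with respect to the triangulated structure on $\stab(\D)$. Closure under tensoring with arbitrary $W$ comes from the inclusion $|\D|_{V \ot W} \subset |\D|_V$, and the prime condition is precisely the tensor product property of Theorem~\ref{thm:tensor}: if $x \notin |\D|_{V \ot W} = |\D|_V \cap |\D|_W$, then $x$ is missing from $|\D|_V$ or from $|\D|_W$, so $V \in f_{\rm coh}(x)$ or $W \in f_{\rm coh}(x)$.

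Next I would establish that $f_{\rm coh}$ is a homeomorphism. Injectivity: two distinct points can be separated by a product of Carlson modules (Corollary~\ref{cor:174}) whose support contains $\overline{\{x\}}$ but not $y$. Surjectivity: given a prime $\msc{P}$, Theorem~\ref{thm:thick_id} writes $\msc{P} = \msc{K}_\Theta$ for the specialization closed set $\Theta = \bigcup_{V \in \msc{P}} |\D|_V$. The prime condition translates, via Carlson realizability and the tensor product property, to the statement that for closed subsets $C_1, C_2 \subset |\D|$ with $C_1 \cap C_2 \subset \Theta$ one has $C_1 \subset \Theta$ or $C_2 \subset \Theta$. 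This is the standard ``meet-primality'' condition that characterizes $\Theta$ as $\{y : x \notin \overline{\{y\}}\}$ for a unique point $x$, and one then checks directly that $\msc{K}_\Theta = f_{\rm coh}(x)$. For the topology, Balmer's basic closed sets $\{\msc{P} : V \notin \msc{P}\}$ pull back under $f_{\rm coh}$ to the supports $|\D|_V$, and these form a basis of closed sets for the Zariski topology on $|\D|$ (again by Corollary~\ref{cor:174}), so $f_{\rm coh}$ is a continuous bijection with continuous inverse.

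Upgrading to an isomorphism of locally ringed spaces is where I expect the main technical weight to sit, though the framework of \cite[\S5--\S6]{balmer10} provides a direct template. Balmer's structure sheaf $\mcl{O}_{\Spec(\stab(\D))}$ is obtained by sheafifying $U \mapsto \operatorname{End}^{\ast}_{\stab(\D)/\msc{K}_{|\D|\setminus U}}(\1)$ on quasi-compact opens. On a basic open $D_\zeta = |\D| \setminus Z(\zeta)$ with $\zeta \in \Ext^n_\D(k,k)$ homogeneous, the complement is the support of the Carlson module $L_\zeta$ by Proposition~\ref{prop:carlson}, so the Verdier quotient is the localization inverting $L_\zeta$, and the endomorphism ring of $\1$ in this localization is identified with the localization $\Ext^\ast_\D(k,k)_\zeta$ by a direct argument modeled on \cite[Theorem~5.3]{balmer10}. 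Passing to degree zero and sheafifying recovers the standard structure sheaf on $\Proj\Ext^\ast_\D(k,k)$. The hard part is verifying that this argument transfers through our braided, non-symmetric setting; however the only inputs it requires are graded-commutativity of $\Ext^\ast_\D(k,k)$, finite-type cohomology (Theorem~\ref{thm:fn}), and the tensor product property (Theorem~\ref{thm:tensor}), all of which are already in hand, so the transfer should be routine.
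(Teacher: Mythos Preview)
Your proposal is correct and follows the same overall logic as the paper: deduce the homeomorphism from the thick-ideal classification (Theorem~\ref{thm:thick_id}) plus the tensor product property, then invoke Balmer's machinery for the ringed-space upgrade. The difference is one of packaging. The paper does not unpack the argument that $f_{\rm coh}$ is a homeomorphism at all; it simply cites \cite[Theorem~5.2]{balmer05}, which takes as input exactly the classification statement of Theorem~\ref{thm:thick_id} and outputs the homeomorphism. Your first two paragraphs essentially re-derive that theorem by hand, which is fine but unnecessary. For the ringed-space statement, the paper takes a slicker route than your direct computation on basic opens: rather than identifying the sheaves explicitly, it uses Balmer's comparison map $\rho:\Spec(\stab(\D))\to |\D|$ from \cite[Definitions~5.1,~6.10]{balmer10}, observes that $\rho\circ f_{\rm coh}=\mathrm{id}$ as maps of spaces, so $\rho$ is a homeomorphism, and then applies \cite[Proposition~6.11]{balmer10}, which says that once $\rho$ is a homeomorphism it is automatically an isomorphism of locally ringed spaces. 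This avoids the sheaf computation entirely. One small slip in your sketch: the Verdier quotient by $\msc{K}_{Z(\zeta)}$ kills $L_\zeta$ rather than inverts it; what gets inverted in the endomorphism ring of $\1$ is the element $\zeta$ itself.
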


\begin{proof}
Given Theorem \ref{thm:thick_id}, the fact that $f_{\rm coh}$ is a homeomorphism follows from \cite[Theorem 5.2]{balmer05}.  By \cite[Proposition 6.11]{balmer10}, the homeomorphism $f_{\rm coh}$ furthermore enhances to an isomorphism of locally ringed spaces.  To elaborate, in \cite[Definition 5.1, 6.10]{balmer10} a map of ringed spaces $\rho:\Spec(\stab(\D))\to |\D|$ is constructed.  One sees directly that the composite $\rho\circ f_{\rm coh}:|\D|\to |\D|$ is the identity, as a map of topological spaces.  Since $f_{\rm coh}$ is a homeomorphism, we see that $\rho$ is a homeomorphism as well.  It follows by \cite[Proposition 6.11]{balmer10} that $\rho$ is an isomorphism of (locally) ringed spaces, and so provides the homeomorphism $f_{\rm coh}=\rho^{-1}$ with ringed structure under which it is also an isomorphism of locally ringed spaces.
\end{proof}

\begin{remark}
In \cite{balmer05,balmer10} Balmer only considers symmetric tensor triangulated categories.  However, all of the definitions, results, and proofs from \cite{balmer05,balmer10} apply verbatim in the braided context.  So, implicitly, we use the fact that $\rep(\D)=Z(\rep(G))$ admits a canonical (highly non-symmetric!) braided structure in the definition \eqref{eq:1078}, and also in the proof of Theorem \ref{thm:spec}.  One can alternatively refer to \cite[\S 6]{negronpevtsova} and in particular \cite[Theorem 6.10]{negronpevtsova}.
\end{remark}


\appendix

\section{A $\pi$-point rank variety for the Drinfeld double}
\label{sect:pi}

We introduce a $\pi$-point rank variety $\Pi(\D)$ for the Drinfeld double $\D$, whose points consist of certain classes of flat algebra maps $K[t]/(t^p)\to \D_K$.  For any $\D$-representation $V$ we construct an associated support space $\Pi(\D)_V$ in $\Pi(\D)$.  We show that the support theory $V\mapsto \Pi(\D)_V$ behaves in the expected manner when we consider the Drinfeld double of a Frobenius kernel $G=\mbb{G}_{(r)}$ in a sufficiently nice algebraic group $\mbb{G}$.  In particular, the support space $\Pi(\D)_V$ vanishes if and only if the given representation $V$ is projective, and the support spaces satisfy the tensor product property
\[
\Pi(\D)_{V\ot W}=\Pi(\D)_V\cap \Pi(\D)_W.
\]
Furthermore, we establish an identification with cohomological support $\Pi(G)_\star\overset{\cong}\to |\D|_\star$.  We also show that our $\pi$-support can be identified with a certain ``universal" $\pi$-point support, which we define in Section \ref{sect:Pi_ot}.
\par

Since these results of this section are isolated from those of the body of the text, in a technical sense, we collect them here in an appendix.

\subsection{$\pi$-points and support for finite group schemes}

Throughout this subsection $G$ is a finite group scheme over our base field $k$.  We recall some definitions and results from \cite{friedlanderpevtsova07}.

\begin{definition}
A $\pi$-point for a finite group scheme $G$, over $k$, is a pair of a field extension $k\to K$ and a flat algebra map $\alpha:K[t]/(t^p)\to KG$ which factors through the group ring of an abelian, unipotent subgroup $U\subset G_K$.
\end{definition}

We generally abuse notation and simply write $\alpha$ for the pair $(K/k, \alpha)$.  Any $\pi$-point defines a corresponding point $p_\alpha$ in the projective spectrum of cohomology $|G|$, which is explicitly the homogenous prime ideal
\begin{equation}\label{eq:870}
p_\alpha:=\ker\left(\Ext^\ast_G(k,k)\overset{K\ot -}\to \Ext^\ast_{G_K}(K,K)\overset{\res_\alpha}\longrightarrow \Ext^\ast_{K[t]/t^p}(K,K)_{\rm red}=K[T]\right).
\end{equation}
In the above formula $T$ is a variable of cohomological degree 2 (or $1$ in characteristic 2).  Flatness of the extension $\alpha$ ensures that the ideal $p_\alpha$ is not all of $\Ext^{>0}_G(k,k)$, so that $p_\alpha$ does in fact define a point in the projective spectrum \cite[Lemma 3.4]{friedlanderpevtsova05} (cf.\ \cite[Theorem 3.2.1]{aapw}).

\begin{definition}
For a given finite group scheme $G$, we say two $\pi$-points $\alpha:K[t]/(t^p)\to KG$ and $\beta:L[t]/(t^p)\to LG$ are equivalent if any finite-dimensional $G$-representation $V$ which restricts to a projective $K[t]/(t^p)$-representation $\res_\alpha(V_K)$ along $\alpha$ also restricts to a projective $L[t]/(t^p)$-representation $\res_\beta(V_L)$ along $\beta$, and vice versa.
\par

We let $\Pi(G)$ denote the collection of equivalence classes of $\pi$-points
\[
\Pi(G)=\{[\alpha]:\alpha:K[t]/(t^p)\to KG\ \text{is a $\pi$-point for }G\}.
\]
For any finite-dimensional $G$-representation $V$ we define the $\pi$-support space $\Pi(G)_V$ as
\[
\Pi(G)_V=\{[\alpha]:\res_\alpha(V_K)\ \text{is non-projective over }K[t]/(t^p)\}.
\]
\end{definition}

The collection of subsets $\{\Pi(G)_V: V\in \rep(G)\}$ in $\Pi(G)$ is closed under finite unions, since $\Pi(G)_V\cup\Pi(G)_W=\Pi(G)_{V\oplus W}$.  Hence there is a uniquely defined topology on $\Pi(G)$ for which the supports of objects $\Pi(G)_V$ provide a basis of closed subsets.

\begin{theorem}[{\cite[Theorem 3.6]{friedlanderpevtsova07}}]\label{thm:891}
If two $\pi$-points $\alpha$ and $\beta$ for $G$ are equivalent, then the corresponding points $p_\alpha,p_\beta\in |G|$ are equal.  Furthermore, the resulting map
\[
\Pi(G)\to |G|,\ \ [\alpha]\mapsto p_\alpha
\]
is a homeomorphism, and for any finite-dimensional representation $V$ this homeomorphism restricts to a homeomorphism $\Pi(G)_V\to |G|_V$.
\end{theorem}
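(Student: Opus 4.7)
The plan is to organize the proof around a precise compatibility between $\pi$-points and Carlson modules. I would first establish the key lemma: for any $\pi$-point $\alpha:K[t]/(t^p)\to KG$ and any homogeneous $\zeta\in \Ext^n_G(k,k)$ of positive even degree, the restriction $\res_\alpha((L_\zeta)_K)$ is projective over $K[t]/(t^p)$ if and only if $\zeta\notin p_\alpha$. Flatness of $\alpha$ carries a projective $G$-resolution of $k$ to a projective $K[t]/(t^p)$-resolution of $K$, so $\res_\alpha((L_\zeta)_K)$ is itself a Carlson-style module attached to the extension $\alpha^\ast(\zeta_K)\in \Ext^n_{K[t]/(t^p)}(K,K)$. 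A direct computation over the truncated polynomial ring, whose reduced cohomology is $K[T]$, then shows this module is projective precisely when $\alpha^\ast(\zeta_K)$ has nonzero image in $K[T]$, i.e.\ precisely when $\zeta\notin p_\alpha$.

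With the key lemma in hand, I would promote it to the full support identification $[\alpha]\in \Pi(G)_V\Leftrightarrow p_\alpha\in |G|_V$ for any finite-dimensional $V$. If $p_\alpha\notin |G|_V$, pick by noetherianness of $\Ext^\ast_G(V,V)$ a homogeneous $\zeta\notin p_\alpha$ annihilating $\Ext^{\geq N}_G(V,V)$; after restriction $\alpha^\ast\zeta$ is a nonzero power of $T$ in the reduced cohomology of $K[t]/(t^p)$, forcing $\res_\alpha V_K$ to be projective. Conversely, if $p_\alpha\in |G|_V$, choose a product of Carlson modules $L=L_{\zeta_1}\otimes\cdots\otimes L_{\zeta_m}$ with $|G|_L=|G|_V$ (Corollary \ref{cor:174}) and apply Proposition \ref{prop:tpp_A} over the local algebra $K[t]/(t^p)$, together with the key lemma, to deduce that $\res_\alpha V_K$ is non-projective. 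This identification immediately yields well-definedness and injectivity of $[\alpha]\mapsto p_\alpha$, and continuity is automatic: the closed sets of $|G|$ are exactly the cohomological supports $|G|_V$ (Corollary \ref{cor:174}), and their preimages are by definition the basic closed sets of $\Pi(G)$.

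The principal obstacle is surjectivity: for each $p\in |G|$ one must produce a $\pi$-point $\alpha$ with $p_\alpha=p$. I would first base-change to the residue field $\kappa(p)$ to reduce to the case that $p$ is a closed point, and then invoke the structural result that every closed point of $|G|_K$ lies in the image of restriction from some abelian unipotent subgroup $U\subset G_K$. Inside such a subgroup $\pi$-points are constructed by hand -- via $1$-parameter subgroups of $\mathbb{G}_a$-type for the infinitesimal part, following Suslin-Friedlander-Bendel, and via elementary abelian $p$-subgroups for the reduced part. Tracing these constructions through the definition of $p_\alpha$ recovers each closed point.

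Finally, with the continuous bijection and the support identification in place, the homeomorphism statement is automatic: the bases of closed sets $\{\Pi(G)_V\}_V$ and $\{|G|_V\}_V$ correspond under the map, so it is also a closed map and hence a homeomorphism. The restriction to each $\Pi(G)_V\to |G|_V$ is then a homeomorphism of subspaces, since both sides inherit the subspace topology from their respective ambient spaces and their bases of closed sets correspond in the same way.
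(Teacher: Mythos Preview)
The paper does not give its own proof of this theorem; it is simply recalled from \cite[Theorem 3.6]{friedlanderpevtsova07} as background for the appendix, and no argument is supplied. So there is nothing in the present paper to compare your proposal against.

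That said, your proposal has a genuine gap in the converse direction of the support identification. You argue: if $p_\alpha\in|G|_V$, pick a product of Carlson modules $L$ with $|G|_L=|G|_V$, and then ``apply Proposition \ref{prop:tpp_A} over $K[t]/(t^p)$, together with the key lemma, to deduce that $\res_\alpha V_K$ is non-projective.'' But nothing in this step links $\res_\alpha V_K$ to $\res_\alpha L_K$. The equality $|G|_L=|G|_V$ is a statement about cohomological supports over $G$; to transport it to a statement about the restrictions along $\alpha$ you would need to know that $L$ and $V$ generate the same thick ideal in $\stab(G)$, so that their restrictions generate the same thick ideal over $K[t]/(t^p)$. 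That implication is precisely the thick ideal classification for $G$ (Theorem \ref{thm:G_class}), which in \cite{friedlanderpevtsova07} is \emph{derived from} Theorem 3.6. Your argument is therefore circular. The actual proof in \cite{friedlanderpevtsova07} avoids this by reducing, via the structure of finite group schemes, to the infinitesimal case and then invoking the Suslin--Friedlander--Bendel identification of cohomological support with the variety of $1$-parameter subgroups directly; the converse inclusion is established there by an explicit computation rather than by appealing to a thick ideal classification.

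Your forward direction and your surjectivity sketch are essentially along the correct lines, though the forward direction requires the additional observation that the degree-two generator of $\Ext^\ast_{K[t]/(t^p)}(K,K)_{\mathrm{red}}$ acts via the periodicity isomorphism $\Omega^2 M\cong M$, so that a power of it annihilating $id_M$ forces $M$ to be stably zero.
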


Note that Theorem \ref{thm:891} tells us that the topological space $\Pi(G)$ is Noetherian.  Hence the basic closed sets $\{\Pi(G)_V\}_{V\in \rep(G)}$ in $\Pi(G)$ provide the collection of \emph{all} closed sets in $\Pi(G)$ \cite[Proposition 3.4]{friedlanderpevtsova07}.

\begin{remark}
One of the main advancements of \cite{friedlanderpevtsova07} is the observation that one can reasonably define support spaces $\Pi(G)_M$ for \emph{infinite-dimensional} $G$-representation $M$.  So, the above presentation omits some of the more significant aspects of \cite{friedlanderpevtsova07}.  One can see Remark \ref{rem:strong} below for additional context.
\end{remark}

\subsection{$\pi$-point support for $\D_\psi$}

We consider an infinitesimal group scheme $G$, with corresponding Drinfeld double $\D=D(G)$.

\begin{definition}\label{def:pi_psi}
Consider any infinitesimal group scheme $G$, and fix an embedded $1$-parameter subgroup $\psi:\mbb{G}_{a(s)}\to G$ which is defined over $k$.  A $\pi$-point for $\D_\psi$ is a pair of a field extension $k\to K$, and a flat algebra map $\alpha:K[t]/(t^p)\to (\D_\psi)_{K}$ such that
\begin{enumerate}
\item[(a)] there exists an \emph{algebra} identification $\D_\psi=kH$ between $\D_\psi$ and the group algebra of a finite group scheme $H$ over $k$.
\item[(b)] under some identification as in (a), $\alpha$ corresponds to a $\pi$-point for the given group scheme $H$.
\end{enumerate}
\end{definition}

Statements (a) and (b) above can alternately be stated as follows: a $\pi$-point for $\D_\psi$ is a flat algebra map $\alpha:K[t]/(t^p)\to (\D_\psi)_K$ which is a $\pi$-point for $\D_\psi$ relative to some \emph{alternate} choice of \emph{cocommutative} Hopf structure $\Delta'$ on $\D_\psi$.  We note that any group scheme $H$ as in (a) is necessarily unipotent, since $\D_\psi$ is local.
\par

We say two $\pi$-points $\alpha:K[t]/(t^p)\to (\D_\psi)_K$ and $\beta:L[t]/(t^p)\to (\D_\psi)_L$ for $\D_\psi$ are equivalent if any finite-dimensional $\D_\psi$-representation $V$ with projective restriction $\res_\alpha(V_K)$ also has projective restriction $\res_\beta(V_L)$, and vice versa.  We define the $\pi$-point space in the expected manner
\[
\Pi(\D_\psi)=\{[\alpha]:\alpha:K[t]/(t^p)\to (\D_\psi)_K\text{ is a $\pi$-point}\},
\]
and for any finite-dimensional $\D_\psi$-representation $V$ we define the $\pi$-support space
\[
\Pi(\D_\psi)_V=\{[\alpha]:\res_\alpha(V_K)\text{ is non-projective over }K[t]/(t^p)\}.
\]
We note that if $\D_\psi$ admits no such identification with a group algebra $kH$, as required in Definition \ref{def:pi_psi} (a), then the space $\Pi(\D_\psi)$ is necessarily empty.

We topologize the space $\Pi(\D_\psi)$ via the basis of closed sets $\{\Pi(\D_\psi)_V: V\text{ in }\rep(\D_\psi)\}$.  As in \eqref{eq:870}, one sees that each $\pi$-point $\alpha$ defines a corresponding point $p_\alpha$ in the cohomological support space $|\D_\psi|$.

\begin{lemma}\label{lem:947}
If two $\pi$-points $\alpha$ and $\beta$ for $\D_\psi$ are equivalent, then their corresponding points $p_\alpha$ and $p_\beta$ in $|\D_\psi|$ are equal.  Furthermore, whenever the $\pi$-point space $\Pi(\D_\psi)$ is non-empty, the map
\[
\Pi(\D_\psi)\to |\D_\psi|,\ \ [\alpha]\mapsto p_\alpha
\]
is a homeomorphism and for any finite-dimensional $\D_\psi$-representation $V$ this homeomorphism restricts to a homeomorphism $\Pi(\D_\psi)_V\to |\D_\psi|_V$.
\end{lemma}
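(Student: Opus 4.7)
The plan is to reduce the lemma to the already established analog for finite group schemes (Theorem \ref{thm:891}) via the algebra identification $\D_\psi = kH$ postulated in Definition \ref{def:pi_psi}(a). Since the $\pi$-point space $\Pi(\D_\psi)$ is non-empty by hypothesis, such an identification exists for some (necessarily unipotent) finite group scheme $H$, and we fix one once and for all. All the structures appearing in the statement of the lemma are then ``algebraic'' and transport across this identification, so the result for $\D_\psi$ will follow from the result for $H$.

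More precisely, the first step is to check that the bijection $\pi\text{-points for }\D_\psi \leftrightarrow \pi\text{-points for }H$ induced by the algebra isomorphism preserves the equivalence relation. This is immediate from the definitions, because equivalence of $\pi$-points only involves projectivity of restrictions $\res_\alpha(V_K)$, and projectivity of a module is a purely algebra-theoretic property of $\D_\psi = kH$; in particular, a finite-dimensional $\D_\psi$-representation is the same data as a finite-dimensional $H$-representation. Hence the bijection descends to a bijection $\Pi(\D_\psi) \leftrightarrow \Pi(H)$ that respects the basic closed sets $\Pi(\D_\psi)_V \leftrightarrow \Pi(H)_V$, and therefore is a homeomorphism for the topologies defined in terms of these basic closed sets.

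The second step is to check the analogous compatibility on the cohomological side. The projective spectrum $|\D_\psi| = \Proj\Ext^\ast_{\D_\psi}(k,k)$ depends only on the algebra structure of $\D_\psi$, so the identification $\D_\psi = kH$ yields $|\D_\psi| = |H|$ as ringed spaces. Moreover, since $\D_\psi$ is local, the formula \eqref{eq:606} expresses $|\D_\psi|_V$ as the support of the sheaf associated to the Yoneda module $\Ext^\ast_{\D_\psi}(k,V)$, so $|\D_\psi|_V = |H|_V$ as well. Finally, the prime ideal $p_\alpha$ attached to a $\pi$-point by \eqref{eq:870} is defined purely by restriction of cohomology along $\alpha$, and so is preserved under the identification. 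The assignments $[\alpha]\mapsto p_\alpha$ on the two sides therefore agree.

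Applying Theorem \ref{thm:891} to $H$ yields that equivalent $\pi$-points for $H$ have equal $p_\alpha$, and that the resulting map $\Pi(H)\to |H|$ is a homeomorphism restricting to homeomorphisms $\Pi(H)_V \to |H|_V$. Combining this with the compatibilities from the first two steps produces the desired statement for $\D_\psi$. The only potential subtlety, and the main point one must be careful about, is that Definition \ref{def:pi_psi} allows the algebra identification $\D_\psi = kH$ to be non-canonical: different identifications produce a priori different subsets of flat maps $\alpha\colon K[t]/(t^p)\to (\D_\psi)_K$ qualifying as $\pi$-points. However, this presents no obstacle here, since the statement of the lemma is really about a fixed space $\Pi(\D_\psi)$ --- once \emph{some} identification is chosen to populate $\Pi(\D_\psi)$ with its $\pi$-points, the argument above applies uniformly with respect to that choice.
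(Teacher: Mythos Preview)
Your argument has a gap in how it handles Definition~\ref{def:pi_psi}. You fix a single identification $\D_\psi = kH$ and then treat the $\pi$-points for $\D_\psi$ as being in bijection with the $\pi$-points for $H$. But the definition does not tie $\pi$-points to one identification: a flat map $\alpha$ qualifies as a $\pi$-point for $\D_\psi$ as soon as it is a $\pi$-point relative to \emph{some} cocommutative Hopf structure on $\D_\psi$. Thus $\Pi(\D_\psi)$ is an intrinsic object---the set of equivalence classes of all such maps, ranging over all possible identifications---and a priori may strictly contain the image of $\Pi(H)$ for any single $H$. Your final paragraph recognizes the issue but resolves it incorrectly: $\Pi(\D_\psi)$ is not ``populated by a choice of identification''; it is fixed once and for all by the definition, and your task is to show that the map out of \emph{this} space is a homeomorphism.

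The repair is exactly the paper's argument. Rather than fixing $H$ globally, one observes that for \emph{each} $\pi$-point $\alpha$ for $\D_\psi$---say witnessed by an identification $\D_\psi = kH_\alpha$---Theorem~\ref{thm:891} applied to $H_\alpha$ gives: $V$ is non-projective at $\alpha$ if and only if $p_\alpha \in |H_\alpha|_V$. Since $H_\alpha$ is unipotent and $|H_\alpha|_V = |\D_\psi|_V$ by \eqref{eq:606}, this criterion becomes $p_\alpha \in |\D_\psi|_V$, which no longer mentions $H_\alpha$. It follows that $\alpha \sim \beta$ if and only if $p_\alpha = p_\beta$ (the closed sets $|\D_\psi|_V$ separate points of $|\D_\psi|$), so $\Pi(\D_\psi)\to |\D_\psi|$ is well-defined and injective, and the restriction to each $\Pi(\D_\psi)_V$ lands in $|\D_\psi|_V$. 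Surjectivity \emph{does} then follow from any single identification, as you say. In effect the paper's proof shows a posteriori that the inclusion $\Pi(H)\hookrightarrow \Pi(\D_\psi)$ is a bijection for every choice of $H$, but this is a consequence of the argument rather than an input to it.
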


\begin{proof}
If $\D_\psi$ admits no cocommutative Hopf structure then the space $\Pi(\D_\psi)$ is empty, and there is nothing to prove.  So let us suppose that $\D_\psi$ admits the necessary alternate Hopf structure.
\par

Consider any cocommutative Hopf structure $\Delta'$ on the underlying algebra $\D_\psi$, and corresponding identification $\D_\psi=kH$.  Since $H$ is necessarily unipotent, as $\D_\psi$ is local, the cohomological support spaces agree $|H|_V=|\D_\psi|_V$ for all $V$ in $\rep(\D_\psi)=\rep(H)$.  (See Section \ref{sect:loc1}.)
\par

Now, Theorem \ref{thm:891} tells us that a $H$-representation $V$ is non-projective at a $\pi$-point $\alpha'$ for $H$ if and only if $p_{\alpha'}\in |H|_V$.  So by the above information we see that a $\D_\psi$-representation $V$ is non-projective at a $\pi$-point $\alpha$ if and only if $p_\alpha\in |\D_\psi|_V$.  Hence two $\pi$-points $\alpha$ and $\beta$ for $\D_\psi$ are equivalent if and only if $p_\alpha=p_\beta$.  This shows that the map $\Pi(\D_\psi)\to |\D_\psi|$ is well-defined and injective.  The map is furthermore surjective since, if we consider our identification $\D_\psi=kH$, the map $\Pi(H)\to |H|(=|\D_\psi|)$ is surjective, meaning every point in the cohomological support space is represented by a $\pi$-point $\alpha:K[t]/(t^p)\to KH=(\D_\psi)_K$.
\end{proof}

Based on the presentation of Section \ref{sect:soe}, we understand that $\D_\psi$ admits a cocommutative Hopf structure whenever $G$ is a Frobenius kernel in a smooth algebraic group which admits a quasi-logarithm.  So Lemma \ref{lem:947} tells us that we have an identification of support theories $\Pi(\D_\psi)_\star\cong |\D_\psi|_\star$ in this case.  In particular, the above lemma is not vacuous.

\subsection{$\pi$-point support for $\D$}

Fix an infinitesimal group scheme $G$ and $\D=D(G)$.

\begin{definition}\label{def:1445}
A $\pi$-point $\alpha$ for $\D$ is a pair of an embedded $1$-parameter subgroup $\psi:\mbb{G}_{a(s),K}\to G_K$ and a $\pi$-point $\bar{\alpha}:K[t]/(t^p)\to \D_\psi$, defined as in Definition \ref{def:pi_psi}.
\end{definition}

For any given $\pi$-point $(\psi,\bar{\alpha})$, we are particularly concerned with the composition $K[t]/(t^p)\to \D_K$ of the map $\bar{\alpha}:K[t]/(t^p)\to \D_\psi$ with the inclusion $\D_\psi\to \D$.  So we generally identify a $\pi$-point with its associated flat map $K[t]/(t^p)\to \D_K$, and simply write $\alpha:K[t]/(t^p)\to \D_K$ by an abuse of notation.

\begin{definition}\label{def:equiv}
Two $\pi$-points $\alpha:K[t]/(t^p)\to \D_K$ and $\beta:L[t]/(t^p)\to \D_L$ are said to be equivalent if any finite-dimensional representation $V$ which restricts to a projective $K[t]/(t^p)$-representation $\res_\alpha(V_K)$ along $\alpha$ also restricts to a projective $L[t]/(t^p)$-representation $\res_\beta(V_L)$ along $\beta$, and vice versa.
\end{definition}

We define the space of equivalence classes of $\pi$-points
\[
\Pi(\D)=\{[\alpha]:\alpha:K[t]/(t^p)\to \D_K\text{ is a $\pi$-point}\},
\]
and for any finite-dimensional $\D$-representation $V$ we define the $\pi$-support
\[
\Pi(\D)_V=\{[\alpha]:\res_\alpha(V_K)\text{ is non-projective}\}.
\]
The space $\Pi(\D)$ is topologized via the basis of closed sets provided by the supports $\Pi(\D)_V$ of all finite-dimensional $\D$-representations.
\par

As in \eqref{eq:870}, any $\pi$-point $\alpha:K[t]/(t^p)\to \D_K$ defines an associated point $p_\alpha\in |\D|$ in the cohomological support space.  One employs Carlson modules exactly as in \cite[Proposition 2.9]{friedlanderpevtsova07} to see that the two points $p_\alpha$ and $p_\beta$ agree whenever $\alpha$ and $\beta$ are equivalent.  So we find

\begin{proposition}\label{prop:973}
There is a well-defined continuous map
\[
w:\Pi(\D)\to |\D|,\ \  \alpha\mapsto p_\alpha.
\]
For any finite-dimensional $\D$-representation $V$, the above map restricts to a map between support spaces $\Pi(\D)_V\to |\D|_V$.
\end{proposition}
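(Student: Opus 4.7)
The plan is to establish three items: (i) well-definedness of $p_\alpha \in |\D|$ for each $\pi$-point $\alpha$; (ii) invariance under the equivalence relation of Definition \ref{def:equiv}; and (iii) the containment $w(\Pi(\D)_V) \subseteq |\D|_V$ together with continuity of $w$.

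For (i) I would directly define
\[
p_\alpha := \ker\bigl(\Ext^*_\D(k,k) \xrightarrow{K\otimes -} \Ext^*_{\D_K}(K,K) \xrightarrow{\res_\alpha} \Ext^*_{K[t]/(t^p)}(K,K)_{\mathrm{red}} \cong K[T]\bigr),
\]
which is a homogeneous prime since $K[T]$ is a graded integral domain. Properness of the ideal (i.e.\ that it does not contain the irrelevant ideal) is the point requiring justification: factor $\alpha$ through the Hopf inclusion $\D_\psi \hookrightarrow \D_K$ as in Definition \ref{def:1445}, obtaining $\bar\alpha\colon K[t]/(t^p) \to \D_\psi$, so that $p_\alpha = f_\psi(p_{\bar\alpha})$ for $f_\psi$ as in \eqref{eq:f_psi}. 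Lemma \ref{lem:947} supplies $p_{\bar\alpha}$ as a genuine point of $|\D_\psi|$, and Lemma \ref{lem:131}(2) guarantees that $f_\psi$ sends non-irrelevant homogeneous primes to non-irrelevant homogeneous primes. Hence $p_\alpha \in |\D|$.

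For (ii) I would adapt the Carlson-module argument of \cite[Proposition 2.9]{friedlanderpevtsova07}. For each homogeneous $\zeta \in \Ext^{ev}_\D(k,k)$, repeated application of Lemma \ref{lem:159} pushed through the chain of restrictions $\D_K \to \D_\psi \to K[t]/(t^p)$ (using that $\bar\alpha$ factors through an abelian unipotent sub-Hopf-algebra of $\D_\psi = kH$ per Definition \ref{def:pi_psi}) shows that $\res_\alpha(L_\zeta)$ is, up to stable isomorphism, a Carlson module over $K[t]/(t^p)$ for the class $\res_\alpha(\zeta) \in K[T]_{\mathrm{red}}$. Since $\Proj K[T]$ is a single point, a Carlson module over $K[t]/(t^p)$ is non-projective iff its defining class vanishes in reduced cohomology. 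Thus $\res_\alpha(L_\zeta)$ is non-projective iff $\zeta \in p_\alpha$, and this characterization is preserved under the equivalence relation of Definition \ref{def:equiv}, so $\alpha \sim \beta$ forces $p_\alpha = p_\beta$.

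For (iii), suppose $\alpha = (\psi, \bar\alpha) \in \Pi(\D)_V$, so that $\res_{\bar\alpha}(\res_\psi V_K)$ is non-projective over $K[t]/(t^p)$. Then $\bar\alpha$ witnesses non-projectivity of $\res_\psi(V_K)$ as a $\pi$-point of $\D_\psi$, whence Lemma \ref{lem:947} gives $p_{\bar\alpha} \in |\D_\psi|_{\res_\psi V_K}$, and Lemma \ref{lem:297}(2) upgrades this to $p_\alpha = f_\psi(p_{\bar\alpha}) \in |\D|_V$. Continuity follows formally from (ii) together with the description of closed subsets of $|\D|$ as intersections of basic closed sets $Z(\zeta) = |\D|_{L_\zeta}$: the Carlson-module identity of (ii) gives $w^{-1}(|\D|_{L_\zeta}) = \Pi(\D)_{L_\zeta}$, which is closed in $\Pi(\D)$ by construction. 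The main obstacle is step (ii), namely verifying that $\res_\alpha(L_\zeta)$ really is a Carlson module over $K[t]/(t^p)$: Lemma \ref{lem:159} applies cleanly to the Hopf inclusions $\D \to \D_\psi$ and $kU \to kH = \D_\psi$, but the final flat map $K[t]/(t^p) \to kU$ cut out by $\bar\alpha$ is not a Hopf map, so the chain of restrictions producing the Carlson module for $\res_\alpha(\zeta)$ must be assembled carefully, invoking the classical abelian-unipotent computation for the last leg.
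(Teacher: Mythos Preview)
Your proposal is correct and follows essentially the same approach as the paper's proof: well-definedness via the Carlson-module argument of \cite[Proposition~2.9]{friedlanderpevtsova07}, the support-space containment via reduction to $\D_\psi$ and Lemma~\ref{lem:947}, and continuity by identifying preimages of closed sets as $\pi$-supports of Carlson modules. The only real difference is that the paper phrases continuity in terms of \emph{products} of Carlson modules (via Corollary~\ref{cor:174}) rather than intersections of the $Z(\zeta)$, which amounts to the same thing. Your flagged concern about the final non-Hopf leg $K[t]/(t^p)\to kU$ is legitimate but harmless: flatness of $\bar\alpha$ and finite-dimensionality of $kU$ force $kU$ to be free over $K[t]/(t^p)$, so restriction still carries projective resolutions to projective resolutions and the Carlson-module computation goes through exactly as in \cite{friedlanderpevtsova07}.
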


\begin{proof}
As stated above, well-definedness can be argued as in \cite{friedlanderpevtsova07}.  The fact that $\Pi(\D)_V$ is mapped to $|\D|_V$ can be reduced to the corresponding claim for $\pi$-support over the $\D_\psi$, which is covered in Lemma \ref{lem:947}.
\par

All that is left is to establish continuity of $w$.  For continuity, we note that any closed set in $|\D|$ is the support $|\D|_L$ of a product of Carlson modules.  The naturality properties of Lemma \ref{lem:159} then gives $w^{-1}(|\D|_L)=\Pi(\D)_L$.  This shows that the preimage of any closed set in $|\D|$ along $w$ is closed in $\Pi(\D)$.
\end{proof}

One can see from Theorem \ref{thm:proj_check}, and the arguments used in the proof of Proposition \ref{prop:973}, that the map $\Pi(\D)\to |\D|$ is in fact \emph{surjective} when $G$ is a Frobenius kernel in a sufficiently nice algebraic group $\mbb{G}$.  We leave the details to the interested reader, as we will observe a stronger result in Theorem \ref{thm:Pi} below.  As a related finding, we have the following.

\begin{theorem}\label{thm:pi_proj_check}
Suppose that $G$ is a Frobenius kernel in an algebraic group $\mbb{G}$, and that $\mbb{G}$ admits a quasi-logarithm.  Then a given finite-dimensional $\D$-representation $V$ is projective if and only if $\Pi(\D)_V=\emptyset$.
\end{theorem}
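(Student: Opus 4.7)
My plan is to prove both directions by reducing to the local subalgebras $\D_\psi$ and then leveraging the identification $\D_\psi \cong K\Sigma_\psi$ provided by a quasi-logarithm to import known results about $\pi$-points of infinitesimal group schemes.

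The forward implication is essentially formal. Suppose $V$ is projective over $\D$, and let $\alpha : K[t]/(t^p) \to \D_K$ be any $\pi$-point, coming from a pair $(\psi,\bar\alpha)$ in the sense of Definition \ref{def:1445}. I would factor the restriction through $\D_\psi$: first observe that $\D_K$ is free over the Hopf subalgebra $\D_\psi$ by Nichols--Zoeller, so $V_K$ remains projective upon restriction to $\D_\psi$. Then, since $\bar\alpha$ is flat and $\D_\psi$ is finite-dimensional, $\D_\psi$ is free over $K[t]/(t^p)$ via $\bar\alpha$, so $\res_{\bar\alpha}(V_K)$ is still projective. Hence $\alpha \notin \Pi(\D)_V$.

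For the converse, I would proceed by contraposition: assume $V$ is not projective and produce a $\pi$-point at which $V$ is non-projective. By Theorem \ref{thm:proj_check}, there is an extension $k \to K$ and an embedded $1$-parameter subgroup $\psi : \mbb{G}_{a(s),K}\to G_K$ such that $V_K$ is not projective over $\D_\psi$. Here the quasi-logarithm becomes essential: by Lemma \ref{lem:536}, it yields an algebra isomorphism $a(l)_\psi : K\Sigma_\psi \overset{\cong}\to \D_\psi$, exhibiting $\D_\psi$ as the group algebra of the finite unipotent group scheme $\Sigma_\psi$ over $K$. Under this identification $V_K$ is a non-projective $\Sigma_\psi$-representation, so cohomological support $|\Sigma_\psi|_{V_K}$ is non-empty. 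Applying Theorem \ref{thm:891} to the finite group scheme $\Sigma_\psi$ (over the base field $K$), there is a field extension $K \to L$ and a $\pi$-point $\alpha' : L[t]/(t^p) \to L\Sigma_\psi$ at which $V_L$ is non-projective.

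Transferring $\alpha'$ along the $L$-base change of $a(l)_\psi$ yields a flat map $\bar\alpha : L[t]/(t^p) \to (\D_\psi)_L$, which is a $\pi$-point for $\D_\psi$ in the precise sense of Definition \ref{def:pi_psi} (with $H = \Sigma_\psi$ providing the required identification with a group algebra). Composing with the inclusion $(\D_\psi)_L \to \D_L$ gives a $\pi$-point $\alpha$ for $\D$ in the sense of Definition \ref{def:1445}, at which $V_L$ is non-projective, contradicting $\Pi(\D)_V = \emptyset$. The main obstacle is not mathematical depth but bookkeeping: one must check that the three layers of definitions --- $\pi$-points for classical group schemes, for $\D_\psi$, and for $\D$ --- are compatibly connected by the algebra isomorphism $a(l)_\psi$, and that flatness and the notion of ``restriction along $\alpha$'' behave compatibly under composition of $\bar\alpha$ with the Hopf inclusion $\D_\psi \subset \D_L$.
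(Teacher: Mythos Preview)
Your proof is correct and follows essentially the same approach as the paper: reduce projectivity to the local subalgebras $\D_\psi$ via Theorem~\ref{thm:proj_check}, then use the quasi-logarithm (Lemma~\ref{lem:536}) to identify $\D_\psi$ with the group algebra $K\Sigma_\psi$ and invoke the classical $\pi$-point theory. The only cosmetic difference is that the paper cites Lemma~\ref{lem:947} for the step ``$V_K$ is projective over $\D_\psi$ iff it is projective at all $\pi$-points for $\D_\psi$,'' whereas you unfold that lemma's content by appealing directly to Theorem~\ref{thm:891} for $\Sigma_\psi$ and transferring along $a(l)_\psi$; these are the same argument.
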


\begin{proof}
By Theorem \ref{thm:proj_check}, $V$ is projective if and only if its restrictions to all $\D_\psi$ are projective.  The hypothesis on $G$, and Lemma \ref{lem:536}, ensure that at all $1$-parameter subgroups $\psi$ the algebra $\D_\psi$ admits an (alternative) cocommutative Hopf structures.  Hence, by Lemma \ref{lem:947}, $V_K$ is projective over $\D_\psi$ if and only it $V_K$ is projective at all $\pi$-points for $\D_\psi$.  Taking this information together, we see that $V$ is projective over $\D$ if and only if $V$ is projective at all $\pi$-points $\alpha:K[t]/(t^p)\to \D_K$ for $\D$.
\end{proof}

\begin{remark}\label{rem:strong}
There are ways to define the $\pi$-support $\Pi(\D)_M$ of an arbitrary (possibly infinite-dimensional) $\D$-module $M$ so that Theorem \ref{thm:pi_proj_check} remains valid at arbitrary $M$.  However, it is unclear whether or not the equivalence relation on $\pi$-points $K[t]/(t^p)\to \D_K$ defined via finite-dimensional representations agrees with the analogous one defined via arbitrary modules (cf.\ \cite[Theorem 4.6]{friedlanderpevtsova07}).  Rather, in the language of \cite{friedlanderpevtsova07}, it is unclear whether equivalent $\pi$-points are in fact \emph{strongly} equivalent.  So we do not know if the support space $\Pi(\D)_M$ can be defined in such a way that depends only on the classes $[\alpha]$ of $\pi$-points, and not the $\pi$-points themselves.  We therefore leave a discussion of $\pi$-point support for infinite-dimensional modules to some later investigation.
\end{remark}

\subsection{Tensor product properties and comparison with cohomological support}

As discussed in subsection \ref{sect:compare}, one can read the material of Section \ref{sect:tensor_fd} through the alternate lens of $\pi$-point support.  In particular, the arguments of Section \ref{sect:tensor_fd} imply that $\pi$-point support behaves well with respect to tensor products, and also agrees with cohomological support (cf.\ \cite{friedlanderpevtsova05,friedlanderpevtsova07}).
\par

We have the following.

\begin{proposition}\label{prop:998}
For any infinitesimal group scheme $G$, and embedded $1$-parameter subgroup $\psi:\mbb{G}_{a(s)}\to G$, $\pi$-point support for $\D_\psi$ satisfies the tensor product property
\[
\Pi(\D_\psi)_{V\ot W}=\Pi(\D_\psi)_V\cap \Pi(\D_\psi)_W.
\]
\end{proposition}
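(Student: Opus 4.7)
The plan is to reduce the tensor product property for $\pi$-support directly to the corresponding property for cohomological support on $\D_\psi$, and then further to the classification of thick subcategories for finite unipotent group schemes provided by Corollary~\ref{cor:U_class}.

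First, I would dispose of the trivial case: if $\D_\psi$ admits no cocommutative Hopf structure, then $\Pi(\D_\psi)$ is empty by Definition~\ref{def:pi_psi}, both sides of the claimed equality are empty, and there is nothing to prove. So I may assume $\Pi(\D_\psi)\neq\emptyset$, fix an algebra identification $\D_\psi=kH$ with $H$ a finite group scheme, and observe that $H$ is necessarily unipotent since $\D_\psi$ is local.

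Next, using Lemma~\ref{lem:947}, I would transport the problem to cohomological support: it suffices to verify the equality $|\D_\psi|_{V\ot W}=|\D_\psi|_V\cap |\D_\psi|_W$, where the tensor product is understood with respect to the original $\D_\psi$-Hopf coproduct. Here Proposition~\ref{prop:tpp_A} is the key tool: it guarantees the tensor product property for cohomological support under any Hopf structure on $\D_\psi$, provided there exists some Hopf structure under which cohomological support classifies thick ideals in $D^b(\D_\psi)$. Equipping $\D_\psi$ with the cocommutative Hopf structure coming from $\D_\psi=kH$, Corollary~\ref{cor:U_class} furnishes such a classification of thick subcategories; locality of $\D_\psi$ together with Lemma~\ref{lem:614} then promotes this to a classification of thick ideals. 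Proposition~\ref{prop:tpp_A} consequently delivers the tensor product property for cohomological support under the original Hopf structure, and a final application of Lemma~\ref{lem:947} transports the equality back to the desired statement about $\pi$-supports.

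The subtle point — and I expect it to be the main thing to keep track of — is the distinction between the original Hopf structure on $\D_\psi$ (which in general is highly non-cocommutative) and the auxiliary cocommutative Hopf structure used to realize $\D_\psi$ as $kH$. The tensor product appearing in the statement is formed with respect to the former, while the classification of thick ideals is available only for the latter. Proposition~\ref{prop:tpp_A} is designed precisely to bridge the gap between two Hopf structures on a common local algebra, so once one confirms that its hypothesis is met via Corollary~\ref{cor:U_class} and Lemma~\ref{lem:614}, the rest of the argument is essentially bookkeeping through the two identifications of support theories $\pi \leftrightarrow$ cohomological.
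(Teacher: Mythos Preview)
Your proposal is correct and follows essentially the same approach as the paper. The paper's proof is terser---it disposes of the empty case, invokes Lemma~\ref{lem:947} to pass to cohomological support, and then simply cites ``the tensor product property for cohomological support provided in the (proof of) Proposition~\ref{prop:tpp_Dp}''---but unpacking that reference yields exactly your argument via Corollary~\ref{cor:U_class}, Lemma~\ref{lem:614}, and Proposition~\ref{prop:tpp_A}.
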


\begin{proof}
If $\Pi(\D_\psi)$ is empty there is nothing to prove.  If $\Pi(\D_\psi)$ is non-empty, then $\pi$-point support for $\D_\psi$ is identified with cohomological support, via Lemma \ref{lem:947}.  So the result follows by the tensor product property for cohomological support provided in the (proof of) Proposition \ref{prop:tpp_Dp}.
\end{proof}

An important reading of Proposition \ref{prop:998} is the following: given a $\pi$-point $\alpha:K[t]/(t^p)\to \D_\psi$, and $\D_\psi$-representations $V$ and $W$, the restriction $\res_\alpha(V\ot W)$ is non-projective if and only both $\res_\alpha(V)$ and $\res_\alpha(W)$ are non-projective.  Since $\pi$-point support for the global algebra $\D$ is itself defined via $\pi$-points for the varying $\D_\psi$, the following result is immediate.

\begin{theorem}\label{thm:tpp_Pi}
For any infinitesimal group scheme $G$, $\pi$-point support for $\D$ satisfies the tensor product property
\[
\Pi(\D)_{V\ot W}=\Pi(\D)_V\cap \Pi(\D)_W.
\]
\end{theorem}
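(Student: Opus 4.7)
The plan is to reduce the statement to the local tensor product property of Proposition \ref{prop:998}, using the fact that every $\pi$-point for $\D$ factors through some $\D_\psi$ by Definition \ref{def:1445}. The key observation is that the inclusion $\D_\psi \hookrightarrow \D_K$ is a map of Hopf algebras, so restriction $\res_\psi:\rep(\D_K)\to \rep(\D_\psi)$ is a tensor functor; in particular, for finite-dimensional $V,W$ in $\rep(\D)$ we have an identification
\[
\res_\psi\!\big((V\ot W)_K\big)\ =\ \res_\psi(V_K)\ \ot\ \res_\psi(W_K)
\]
as $\D_\psi$-representations, where the tensor on the right uses the native (non-cocommutative) coproduct on $\D_\psi$.

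First, I would unwind definitions. Given a $\pi$-point $\alpha = (\psi,\bar\alpha)$ with $\bar\alpha:K[t]/(t^p)\to \D_\psi$, the associated restriction factors as $\res_\alpha = \res_{\bar\alpha}\circ \res_\psi$. Hence $\res_\alpha((V\ot W)_K)$ is non-projective over $K[t]/(t^p)$ if and only if $\res_{\bar\alpha}$ applied to the $\D_\psi$-tensor product $\res_\psi(V_K)\ot \res_\psi(W_K)$ is non-projective.

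Next, I would invoke Proposition \ref{prop:998}: for the local algebra $\D_\psi$ and any $\D_\psi$-representations $V',W'$, one has $[\bar\alpha]\in \Pi(\D_\psi)_{V'\ot W'}$ if and only if $[\bar\alpha]\in \Pi(\D_\psi)_{V'}\cap \Pi(\D_\psi)_{W'}$. Applying this with $V'=\res_\psi(V_K)$ and $W'=\res_\psi(W_K)$, we conclude that $\res_{\bar\alpha}(V'\ot W')$ is non-projective iff both $\res_{\bar\alpha}(V')$ and $\res_{\bar\alpha}(W')$ are non-projective, which translates back to $[\alpha]\in \Pi(\D)_V\cap \Pi(\D)_W$. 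This gives both inclusions of the desired equality simultaneously.

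There is essentially no main obstacle here beyond correctly threading the definitions: the real content of the theorem is packaged in Proposition \ref{prop:998}, whose proof in turn relies on the identification of $\pi$-point support with cohomological support at each $\D_\psi$ (Lemma \ref{lem:947}) together with the tensor product property for cohomological support of $\D_\psi$ (Proposition \ref{prop:tpp_Dp}). One minor point worth stating carefully is the case where $\Pi(\D_\psi)$ is empty, which is harmless because then $[\bar\alpha]$ cannot exist and the claim for that particular $\psi$ is vacuous. Thus the proof reduces to a clean bookkeeping argument built on top of the local tensor product property.
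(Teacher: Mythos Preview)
Your proposal is correct and follows essentially the same approach as the paper: both reduce the global tensor product property to Proposition \ref{prop:998} via the observation that every $\pi$-point for $\D$ factors through some $\D_\psi$ and that restriction along the Hopf inclusion $\D_\psi\hookrightarrow \D_K$ is a tensor functor. Your write-up is in fact slightly more explicit than the paper's (which simply declares the result ``immediate'' after stating the pointwise reading of Proposition \ref{prop:998}), and your remark on the vacuous case $\Pi(\D_\psi)=\emptyset$ is a nice bit of care.
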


Finally, when $G$ is a Frobenius kernel in a sufficiently nice algebraic group $\mbb{G}$, we find that $\pi$-point support is identified with cohomological support.

\begin{theorem}\label{thm:Pi}
Suppose that $G$ is a Frobenius kernel in an algebraic group $\mbb{G}$, and that $\mbb{G}$ admits a quasi-logarithm.  Then the map $w:\Pi(\D)\to |\D|$ of Proposition \ref{prop:973} is a homeomorphism, and restricts to a homeomorphism $\Pi(\D)_V\to |\D|_V$ for all finite-dimensional $\D$-representations $V$.
\end{theorem}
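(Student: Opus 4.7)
The plan is to bootstrap the homeomorphism $\Pi(\D)\to |\D|$ from its local counterpart $\Pi(\D_\psi)\to|\D_\psi|$ of Lemma \ref{lem:947} (which is available precisely because our quasi-logarithm hypothesis endows each $\D_\psi$ with an alternative cocommutative Hopf structure), using the local-to-global comparison between $|\D_\psi|$ and $|\D|$ established in Section \ref{sect:Sigma}.

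The crux is the following claim: for any $\pi$-point $\alpha$ for $\D$ arising from a $1$-parameter subgroup $\psi:\mbb{G}_{a(s),K}\to G_K$ and a $\pi$-point $\bar{\alpha}:K[t]/(t^p)\to \D_\psi$, and for any finite-dimensional $\D$-representation $V$, we have $\alpha\in \Pi(\D)_V$ if and only if $p_\alpha\in |\D|_V$. The forward direction is Proposition \ref{prop:973}. For the converse, unpacking the factorization of $\alpha$ through $\D_\psi$, the restriction $\res_\alpha(V_K)$ is non-projective over $K[t]/(t^p)$ if and only if $\bar{\alpha}\in \Pi(\D_\psi)_{V_K}$, which by Lemma \ref{lem:947} is equivalent to $p_{\bar{\alpha}}\in |\D_\psi|_{V_K}$. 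Applying Proposition \ref{prop:restrict} over $K$ together with Lemma \ref{lem:297}(1), the condition $p_{\bar{\alpha}}\in |\D_\psi|_{V_K}$ becomes $f_\psi(p_{\bar{\alpha}})=p_\alpha\in |\D|_V$, proving the claim.

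From this claim injectivity of $w$ is immediate: if $p_\alpha=p_\beta$ then $\alpha$ and $\beta$ lie in exactly the same supports $\Pi(\D)_V$, and hence are equivalent in the sense of Definition \ref{def:equiv}. Surjectivity onto each support space (and hence of $w$ itself, by taking $V=k$) follows from the reconstruction formula of Proposition \ref{prop:reconstruct}: any $x\in |\D|_V$ is of the form $f_\psi(x')$ for some embedded $1$-parameter subgroup $\psi$ and some $x'\in |\D_\psi|_{V_K}$; applying Lemma \ref{lem:947} realize $x'$ as $p_{\bar{\alpha}}$ for some $\bar{\alpha}\in \Pi(\D_\psi)_{V_K}$, and composing with $\D_\psi\hookrightarrow \D_K$ yields a $\pi$-point $\alpha$ for $\D$ with $p_\alpha=x$ and, by the claim above, $\alpha\in \Pi(\D)_V$.

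To upgrade this continuous bijection to a homeomorphism, I would invoke the claim one more time: it gives both $w^{-1}(|\D|_V)=\Pi(\D)_V$ and $w(\Pi(\D)_V)=|\D|_V$. The sets $\{|\D|_V\}_{V\in \rep(\D)}$ exhaust every closed subset of $|\D|$ by Corollary \ref{cor:174}, while the sets $\{\Pi(\D)_V\}_{V\in \rep(\D)}$ form a basis of closed sets for $\Pi(\D)$ by definition, so $w$ carries a basis of closed sets bijectively onto all closed sets, and is therefore a homeomorphism which restricts to $\Pi(\D)_V\overset{\cong}\to |\D|_V$ for every $V$. The main obstacle is entirely contained in the key claim; once Proposition \ref{prop:restrict}, Lemma \ref{lem:297}, and Lemma \ref{lem:947} are in hand, everything else is formal.
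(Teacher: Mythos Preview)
Your proof is correct and follows essentially the same approach as the paper's own argument: both use Lemma \ref{lem:947}, Proposition \ref{prop:restrict} (base-changed to $K$), Lemma \ref{lem:297}, and Proposition \ref{prop:reconstruct} in the same way. Your organization around the explicit biconditional ``$\alpha\in\Pi(\D)_V \Leftrightarrow p_\alpha\in|\D|_V$'' is slightly cleaner than the paper's more implicit phrasing, but the logical content is identical.
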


\begin{proof}
Let $w:\Pi(\D)\to |\D|$ denote the map $[\alpha]\mapsto p_\alpha$ of Proposition \ref{prop:973}.  Under the above hypotheses Lemma \ref{lem:536} tells us that all $\D_\psi$ have non-vanishing $\pi$-support spaces $\Pi(\D_\psi)$.  So Lemma \ref{lem:947} tells us that $\pi$-supports and cohomological supports are identified for all $\D_\psi$.
\par

Suppose we have two $\pi$-points $\alpha,\beta\in \Pi(\D)$ for which $p_\alpha=p_\beta$.  Let $V$ be any representation which is non-projective at $\alpha$.  Write explicitly $\alpha:K[t]/(t^p)\to \D_\psi\to \D_K$ and $\beta:K'[t]/(t^p)\to \D_{\psi'}\to \D_{K'}$.  Since, at any embedded $1$-parameter subgroups $\eta$, the composites
\[
\Pi(\D_\eta)\to \Pi(\D)\to |\D|\ \ \text{and}\ \ \Pi(\D_\eta)\overset{\cong}\to |\D_\eta|\to |\D|
\]
are both given by $[\eta]\mapsto p_\eta$, i.e.\ since the two composites agree, Proposition \ref{prop:restrict} ensures that $[\alpha]\in \Pi(\D_\psi)_{V_K}$ and $[\beta]\in \Pi(\D_{\psi'})_{V_{K'}}$.  Rather, both $\res_\alpha(V_K)$ and $\res_\beta(V_{K'})$ are non-projective.  Since $V$ was chosen arbitrarily, this shows $\alpha$ is equivalent to $\beta$.  So we see that $w$ is injective.  Surjectivity follows from Proposition \ref{prop:reconstruct}, applied to $V=k$.
\par

We understand now that $w:\Pi(\D)\to |\D|$ is a bijection of sets.  One argues similarly to see that each restriction $\Pi(\D)_V\to |\D|_V$ is a bijection.  Finally, since all basic closed subsets in $\Pi(\D)$ and $|\D|$ are realized as supports of finite-dimensional representation, we see that $w$ is in fact a homeomorphism.
\end{proof}

\subsection{Comparing with a universal $\pi$-point space}
\label{sect:Pi_ot}

Consider the Drinfeld double $\D$ of an arbitrary finite group scheme--or really any Hopf algebra.  We have a universal definition of ``$\pi$-points", from the perspective of classifying thick tensor ideals in the stable category.  Namely, we consider all flat algebra maps $\alpha:K[t]/(t^p)\to \D_K$ which satisfy the tensor product property:
\begin{center}
 ({\sf TPP}) $\res_\alpha(V_K\ot W_K)$ is non-projective if and only if both $\res_\alpha(V_K)$ and $\res_\alpha(W_K)$ are non-projective.
\end{center}
\par

As with our other classes of $\pi$-points, we consider the space $\Pi_\ot(\D)$ of equivalence classes of all such universal $\pi$-points, and topologize this space in the expected way.  To be clear, our equivalence relation for universal $\pi$-points is defined exactly as in Definition \ref{def:equiv}, where we simply replace ``$\pi$-point" with ``universal $\pi$-point" in the definition.  We have the supports
\[
\Pi_\ot(\D)_V=\{[\alpha]:\res_\alpha(V_K)\text{ is non-projective}\}
\]
and corresponding support theory $V\mapsto \Pi_\ot(\D)_V$.  
\par

One notes that the class of universal $\pi$-points is chosen in the coarsest possible was to ensure that the tensor product property
\[
\Pi_\ot(\D)_{V\ot W}=\Pi_\ot(\D)_V\cap \Pi_\ot(\D)_W
\]
holds, and to ensure that the support $\Pi_\ot(\D)_V$ depends only on the class of $V$ in the stable category.
\par

Now, if we specifically consider the Drinfeld double of an infinitesimal group scheme, Theorem \ref{thm:tpp_Pi} tells us that any $\pi$-point $\alpha:K[t]/(t^p)\to \D_K$ as in Definition \ref{def:1445} is a universal $\pi$-point.  Furthermore, the equivalence relations on $\pi$-points and universal $\pi$-points are exactly the same.  So we obtain a topological embedding $\iota:\Pi(\D)\to \Pi_\ot(\D)$ for which we have
\begin{equation}\label{eq:1593}
\Pi(\D)_V=\Pi(\D)\cap \Pi_\ot(\D)_V,
\end{equation}
simply by the definitions of these supports.

\begin{theorem}\label{thm:Pi_ot}
Suppose that $G$ is a Frobenius kernel in an smooth algebraic group $\mbb{G}$, and that $\mbb{G}$ admits a quasi-logarithm.  Then the inclusion $\iota:\Pi(\D)\to \Pi_\ot(\D)$ is a homeomorphism, and all of the restrictions $\iota_V:\Pi(\D)_V\to \Pi_\ot(\D)_V$ are also homeomorphisms.
\end{theorem}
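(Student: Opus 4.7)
The plan is to show that every universal $\pi$-point in $\Pi_\ot(\D)$ is equivalent to an honest $\pi$-point in $\Pi(\D)$ by identifying its equivalence class with a point of $|\D|$, via the Balmer spectrum. Given any universal $\pi$-point $\beta:K[t]/(t^p)\to \D_K$, I would consider the subcategory
\[
\mcl{P}_\beta:=\{V\in \stab(\D):\res_\beta(V_K)\text{ is projective over }K[t]/(t^p)\}.
\]
Flatness of $\beta$ makes $\res_\beta$ exact, so $\mcl{P}_\beta$ is thick; the hypothesis ({\sf TPP}) together with exactness makes $\mcl{P}_\beta$ a \emph{prime} thick ideal in $\stab(\D)$; and it is a proper ideal because $\res_\beta(k)=K$ is non-projective over $K[t]/(t^p)$.

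By Theorem \ref{thm:spec} we have $\mcl{P}_\beta=f_{\rm coh}(x)$ for a unique $x\in |\D|$, and the key step is to identify $x$ with the cohomological point $p_\beta\in |\D|$ defined in \eqref{eq:870}. For this I would test against Carlson modules: for $\zeta\in \Ext^{ev}_\D(k,k)$, flatness of $\beta$ (by the same argument that proves Lemma \ref{lem:159}) ensures that $\res_\beta L_\zeta$ is a Carlson module for $\res_\beta(\zeta)\in \Ext^{ev}_{K[t]/(t^p)}(K,K)$. A direct computation in $\stab(K[t]/(t^p))$ shows that an even-degree Carlson module over $K[t]/(t^p)$ is projective if and only if its defining class is nonzero, so $L_\zeta\in \mcl{P}_\beta$ iff $\zeta\notin p_\beta$. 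Combined with $|\D|_{L_\zeta}=Z(\zeta)$ and the description $f_{\rm coh}(x)=\{V:x\notin |\D|_V\}$, the equality $\mcl{P}_\beta=f_{\rm coh}(x)$ then translates into the statement that $x\in Z(\zeta)\Leftrightarrow p_\beta\in Z(\zeta)$ for every homogeneous $\zeta\in \Ext^{ev}_\D(k,k)$, which forces $x=p_\beta$ since the $Z(\zeta)$ separate points of $\Proj$. In particular, $\res_\beta V$ is projective iff $p_\beta\notin |\D|_V$ for all finite-dimensional $V$.

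Consequently, two universal $\pi$-points are equivalent iff they define the same point in $|\D|$, so $[\beta]\mapsto p_\beta$ is a well-defined injection $\Pi_\ot(\D)\to |\D|$; surjectivity is immediate from Theorem \ref{thm:Pi}, which already realizes every point of $|\D|$ as $p_\alpha$ for some $\alpha\in \Pi(\D)\subset \Pi_\ot(\D)$. Combining this with the bijection $w:\Pi(\D)\to |\D|$ of Theorem \ref{thm:Pi} forces $\iota$ to be a bijection. For the topology, \eqref{eq:1593} reads $\Pi(\D)_V=\iota^{-1}(\Pi_\ot(\D)_V)$, and bijectivity upgrades this to $\iota(\Pi(\D)_V)=\Pi_\ot(\D)_V$; since both spaces are topologized by their support spaces as basic closed sets, $\iota$ is both continuous and closed, hence a homeomorphism, and each restriction $\iota_V$ is a homeomorphism by the same argument. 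The main obstacle will be the Carlson module identification in the second paragraph, which requires extending Lemma \ref{lem:159} from Hopf inclusions to flat algebra maps, together with the explicit computation in the stable module category of $K[t]/(t^p)$.
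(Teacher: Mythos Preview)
Your proof is correct and follows the same high-level strategy as the paper: associate to each universal $\pi$-point $\beta$ the prime ideal $\mcl{P}_\beta=\{V:\res_\beta(V_K)\text{ projective}\}$, use Theorem \ref{thm:spec} to identify prime ideals with points of $|\D|$, observe that $[\beta]\mapsto\mcl{P}_\beta$ is injective by the very definition of the equivalence relation, and combine with Theorem \ref{thm:Pi} to conclude $\iota$ is bijective.

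The one difference is your second paragraph. The paper avoids the Carlson module computation entirely by invoking Balmer's universal property \cite[Theorem 3.2]{balmer05}: the maps $f_\pi:\Pi(\D)\to\Spec(\stab(\D))$ and $f_\ot:\Pi_\ot(\D)\to\Spec(\stab(\D))$ arise from the universal property applied to the support data $V\mapsto\Pi(\D)_V$ and $V\mapsto\Pi_\ot(\D)_V$, and the compatibility $f_{\rm coh}\circ w=f_\pi$ is immediate from Theorem \ref{thm:Pi} (which already says $[\alpha]\in\Pi(\D)_V\Leftrightarrow p_\alpha\in|\D|_V$ for honest $\pi$-points $\alpha$). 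Since $f_\pi=f_\ot\circ\iota$ and $f_\pi$ is a homeomorphism, $f_\ot$ is surjective; injectivity of $f_\ot$ is by definition; hence $\iota$ is bijective. One never needs to know that the point corresponding to a \emph{general} universal $\pi$-point $\beta$ equals $p_\beta$.

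Your Carlson module argument is nonetheless valid (flatness of $\beta$ does make $\D_K$ projective over $K[t]/(t^p)$, so Lemma \ref{lem:159} extends; and the computation in $\stab(K[t]/(t^p))$ is straightforward since $\Omega^2k\cong k$ there). It buys you something the paper only records afterward as Corollary \ref{cor:Pi_ot}(2): the direct identification of the natural map $[\beta]\mapsto p_\beta$ with the homeomorphism $\Pi_\ot(\D)\to|\D|$. But for the theorem itself, that detour is unnecessary, and what you flag as the ``main obstacle'' can simply be dropped.
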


\begin{proof}
Take $\msc{Z}=\stab(\D)$, and recall the isomorphism $w:\Pi(\D)\to |\D|$ of Theorem \ref{thm:Pi}.  The universal property of the Balmer spectrum \cite[Theorem 3.2]{balmer05}, and Theorem \ref{thm:tpp_Pi}, we have continuous maps to the Balmer spectrum $f_\pi:\Pi(\D)\to \Spec(\msc{Z})$ and $f_\ot:\Pi_\ot(\D)\to \Spec(\msc{Z})$ which are compatible, in the sense that $f_\ot\circ \iota=f_\pi$.  Similarly, the map $f_{\rm coh}:|\D|\to \Spec(\msc{Z})$ of Theorem \ref{thm:spec} is such that $f_{\rm coh}\circ w=f_\pi$.  Since $w$ and $f_{\rm coh}$ are homeomorphisms, by Theorems \ref{thm:spec} and \ref{thm:Pi}, we see that $f_\pi$ is a homeomorphism.  Since $f_\pi$ factors through $f_\ot$, we see that $f_\ot:\Pi_\ot(\D)\to \Spec(\msc{Z})$ is surjective.  We claim that this surjection is in fact a bijection.
\par

We have explicitly,
\[
\begin{array}{rl}
f_\ot(\alpha) &=\{V\in \msc{Z}: [\alpha]\notin \Pi_\ot(\D)_V\}\\
&=\{V\in \msc{Z}:\res_\alpha(V_K)\text{ is projective}\}
\end{array}
\]
\cite[Theorem 3.2]{balmer05}.  Hence $f_\ot(\alpha)=f_\ot(\beta)$ implies that any $\D$-representation with projective restriction along $\alpha$ also has projective restriction along $\beta$, and vice versa.  So, by definition, the two classes agree $[\alpha]=[\beta]$.  So we see that $f_\ot$ is injective, and therefore a bijection.  It follows that $\iota:\Pi(\D)\to \Pi_\ot(\D)$ is a bijection.  Since $\iota$ is a topological embedding, this bijection is furthermore a homeomorphism.  The fact that all of the restrictions $\Pi(\D)_V\to \Pi_\ot(\D)_V$ are homeomorphisms as well follows by the intersection formula \eqref{eq:1593}.
\end{proof}

We collect our results about the support theory $\Pi_\ot(\D)_\star$ from above to find the following, somewhat remarkable, corollary.

\begin{corollary}\label{cor:Pi_ot}
Fix $G$ as in Theorem \ref{thm:Pi_ot}, and $\D$ the corresponding Drinfeld double.  Then
\begin{enumerate}
\item $\D$ admits enough universal $\pi$-points, in the sense that a $\D$-representation $V$ is projective if and only if its restriction $\res_\alpha(V_K)$ along each universal $\pi$-point $\alpha:K[t]/(t^p)\to \D_K$ is projective.
\item The natural map $w:\Pi_\ot(\D)\to |\D|$, $[\alpha]\mapsto p_\alpha$, is a homeomorphism.  In particular, the universal $\pi$-point space $\Pi_\ot(\D)$ has the structure of a projective scheme.
\item Any flat map $\alpha:K[t]/(t^p)\to \D$ which satisfies the tensor product property {\rm(TPP)} is equivalent to one of the form required in Defintion \ref{def:1445}.
\end{enumerate}
\end{corollary}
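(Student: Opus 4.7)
The plan is to extract all three statements as essentially immediate consequences of Theorem \ref{thm:Pi_ot}, together with its antecedents. For part (1), the ``only if'' direction is routine: projectivity of $V$ over $\D$ is preserved by base change to $K$ and further restriction along any flat algebra map, so $\res_\alpha(V_K)$ is projective for any universal $\pi$-point $\alpha$. For the converse, I would invoke Theorem \ref{thm:tpp_Pi} to note that every $\pi$-point in the sense of Definition \ref{def:1445} satisfies {\rm(TPP)} and hence qualifies as a universal $\pi$-point. Therefore the hypothesis that $\res_\alpha(V_K)$ is projective for all universal $\pi$-points covers all Definition \ref{def:1445} $\pi$-points a fortiori, and Theorem \ref{thm:pi_proj_check} then concludes that $V$ is projective over $\D$.

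For part (2), I would first verify that the assignment $[\alpha]\mapsto p_\alpha$ is well-defined on $\Pi_\ot(\D)$ by noting that, via the bijection $\iota:\Pi(\D)\overset{\cong}\to \Pi_\ot(\D)$ of Theorem \ref{thm:Pi_ot}, every class in $\Pi_\ot(\D)$ has a Definition \ref{def:1445} representative $\beta$, and $p_\beta$ is equivalence-invariant by Proposition \ref{prop:973}. Under this construction the map $w$ agrees with the composition of $\iota^{-1}$ with the homeomorphism $\Pi(\D)\overset{\cong}\to|\D|$ of Theorem \ref{thm:Pi}, and hence is itself a homeomorphism. The projective scheme structure on $\Pi_\ot(\D)$ is then transported from $|\D|$ via this homeomorphism.

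Part (3) is, after unwinding definitions, exactly the surjectivity of $\iota$ asserted in Theorem \ref{thm:Pi_ot}. Namely, the class $[\alpha]\in \Pi_\ot(\D)$ of any flat map satisfying {\rm(TPP)} is the image $\iota[\beta]$ of some class $[\beta]\in \Pi(\D)$ represented by a map of Definition \ref{def:1445} form, and by definition of the equivalence relation on $\Pi_\ot(\D)$ this means $\alpha$ and $\beta$ are equivalent as universal $\pi$-points. There is no substantive obstacle in the proof; the only care required lies in keeping track of the several equivalence relations in play --- among Definition \ref{def:pi_psi} $\pi$-points for the local $\D_\psi$, among Definition \ref{def:1445} $\pi$-points for $\D$, and among universal $\pi$-points --- but since each is defined by projective detection on finite-dimensional representations, they coincide on their common domains and the translations are formal.
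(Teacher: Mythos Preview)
Your proposal is correct and takes essentially the same approach as the paper: the paper gives no explicit proof for this corollary, stating only that one ``collects our results about the support theory $\Pi_\ot(\D)_\star$ from above'' to obtain it, and your argument does precisely this by invoking Theorems \ref{thm:Pi_ot}, \ref{thm:Pi}, \ref{thm:pi_proj_check}, and \ref{thm:tpp_Pi} in the appropriate places. One very minor point: in part (2) you verify well-definedness of $[\alpha]\mapsto p_\alpha$ by passing to a Definition \ref{def:1445} representative $\beta$, but strictly speaking you then need $p_\alpha=p_\beta$ for the original $\alpha$; this follows from the same Carlson module argument cited in Proposition \ref{prop:973}, which applies to any flat map and not just Definition \ref{def:1445} $\pi$-points, and your closing remark about the equivalence relations coinciding essentially covers it.
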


Of course, the issue with the universal $\pi$-support $\Pi_\ot(\D)_\star$, in general, is that it is difficult to understand the space $\Pi_{\ot}(\D)$ explicitly, or even to understand when this space is non-empty.  So, one \emph{needs} a practical construction of $\pi$-points, as above, in order to populate $\Pi_\ot(\D)$ with enough points, and in order to see that this theory carries significant amounts of information.

\begin{remark}
For a general Hopf algebra $A$, we can define the universal $\pi$-point support theory $V\mapsto \Pi_\ot(A)_V$ exactly as above.  We make no claim that this theory is well-behaved, or even non-vacuous in general.  However, it is interesting that there are even any examples in characteristic $0$ where one has enough universal $\pi$-points.  For example, the results of \cite{pevtsovawitherspoon15} imply that the support theory $\Pi_{\ot}(A)_\star$ satisfies the conclusions of Corollary \ref{cor:Pi_ot} (1) \& (2), for $A$ a ``quantum elementary abelian group" over $\mbb{C}$.  Similarly, for finite group schemes, one can argue as in the proof of Theorem \ref{thm:Pi_ot} to see that the standard $\pi$-point support theory $\Pi(G)_\star$ and universal theory $\Pi_\ot(G)_\star$ agree.
\end{remark}

\subsection{Remaining questions}
\label{sect:Q}

At this point we have recorded a number of non-trivial results concerning $\pi$-points and support for Drinfeld doubles of (some) infinitesimal group schemes.  We record a number of remaining questions which the reader may consider.

\begin{question}
{(1)} Can one provide an intrinsic proof of the tensor product property of Theorem \ref{thm:tpp_Pi}, i.e.\ one which follows from a direct analysis of $\pi$-points, and does not reference an auxiliary support theory?  (Compare with \cite{friedlanderpevtsova05,pevtsovawitherspoon09,friedlander}.)

{(2)} Does the Drinfeld double of a general infinitesimal group scheme $G$ admit enough (universal) $\pi$-points, in the sense of Corollary \ref{cor:Pi_ot} (1)?

{(3)} Is there a reasonable extension of $\pi$-point support $\Pi(\D)_M$ to infinite-dimensional $M$?  In particular, does there exist such a definition which reproduces the tensor product property
\[
\Pi(\D)_{M\ot N}=\Pi(\D)_M\cap \Pi(\D)_N
\]
at arbitrary $M$ and $N$?
\end{question}

Of course, question (3) has to do with one's (in)ability to use $\pi$-point support in certain tensor triangular investigations, as in Section \ref{sect:thicK} and \cite{bensoniyengarkrause11,bensoniyengarkrausepevtsova18,balmerkrausestevenson19,balmer20} for example.

\bibliographystyle{abbrv}

\end{document}